\theoremstyle{plain}
\newtheorem{theorem}{Theorem}
\newtheorem{lemma}[theorem]{Lemma}
\newtheorem{proposition}[theorem]{Proposition}
\theoremstyle{definition}
\newtheorem{example}[theorem]{Example}
\newtheorem{conjecture}[theorem]{Conjecture}
\theoremstyle{remark}
\newtheorem{remark}[theorem]{Remark}
\title{\bf The $r$-matching sequencibility of complete graphs}
\author{Adam Mammoliti \\
\small School of Mathematics and Statistics\\[-0.8ex]
\small UNSW Sydney\\[-0.8ex]
\small NSW 2052, Australia\\
\small\tt a.mammoliti@unsw.edu.au\\
}
\date{
\small Mathematics Subject Classifications: 05C65, 05C70, 05C78}
\begin{document}

\maketitle

\begin{abstract}
Alspach [{\em Bull. Inst. Combin. Appl.}, 52 (2008), pp. 7--20] defined the maximal matching sequencibility of a graph $G$,
denoted~$ms(G)$,
to be the largest integer $s$ for which
there is an ordering of the edges of $G$ such that every $s$ consecutive edges form a matching.
Alspach also proved that $ms(K_n) = \bigl\lfloor\frac{n-1}{2}\bigr\rfloor$.
Brualdi et al. [{\em Australas. J. Combin.}, 53 (2012), pp. 245--256] extended the definition to cyclic matching sequencibility of a graph $G$,
denoted $cms(G)$,
which allows cyclical orderings
and proved that $cms(K_n) = \bigl\lfloor\frac{n-2}{2}\bigr\rfloor$.

In this paper, we generalise these definitions to require that
every $s$ consecutive edges form a subgraph where every vertex has degree at most $r\geq 1$,
and we denote the maximum such number for a graph $G$ by $ms_r(G)$
and $cms_r(G)$ for the non-cyclic and cyclic cases, respectively.
We conjecture that $ms_r(K_n) = \bigl\lfloor\frac{rn-1}{2}\bigr\rfloor$
and ${\bigl\lfloor\frac{rn-1}{2}\bigr\rfloor-1}~
\leq cms_r(K_n)  \leq  \bigl\lfloor\frac{rn-1}{2}\bigr\rfloor$
and that both bounds are attained for some $r$ and~$n$.
We prove these conjectured identities for the majority of cases,
by defining and characterising selected decompositions of $K_n$.
We also provide bounds on $ms_r(G)$ and $cms_r(G)$
as well as results on hypergraph analogues of $ms_r(G)$ and $cms_r(G)$.

\bigskip\noindent \textbf{Keywords:}
Graph; matching; edge ordering; matching sequencibility; graph decomposition; hypergraph
\end{abstract}

\section{Introduction}

The (maximal) {\em matching sequencibility} of a simple graph $G$,
denoted $ms(G)$,
is the largest integer $s$ for which
there exists an ordering of the edges of $G$ so that every $s$ consecutive edges form a matching.
Alspach~\cite{MR2394738} determined  $ms(K_n)$,
as follows.

\begin{theorem}[Alspach \cite{MR2394738}]\label{thm:Matching sequence}
For each integer $n \geq 3$,
\[
ms(K_n) = \left\lfloor\frac{n-1}{2}\right\rfloor\,.
\]
\end{theorem}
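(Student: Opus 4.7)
The plan is to prove the two inequalities $ms(K_n) \leq \lfloor (n-1)/2 \rfloor$ and $ms(K_n) \geq \lfloor (n-1)/2 \rfloor$ separately.

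For the upper bound, I would split on the parity of $n$. If $n = 2k+1$ is odd, then any matching in $K_n$ has at most $k$ edges, since it covers $2k$ distinct vertices out of the $n$ available, so any $s$ consecutive edges forming a matching must satisfy $s \leq k = \lfloor (n-1)/2 \rfloor$. If $n = 2k$ is even, $\lfloor (n-1)/2 \rfloor = k-1$, so I need to rule out $s = k$. Suppose, for contradiction, that every $k$ consecutive edges in some ordering form a matching; each such window then consists of $k$ edges on $2k$ vertices and is therefore a perfect matching. Comparing the windows at positions $[i,i+k-1]$ and $[i+1,i+k]$, the only change is that the edge $e_i$ is dropped and the edge $e_{i+k}$ is added. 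For the second window to remain a perfect matching, $e_{i+k}$ must cover exactly the two vertices uncovered by removing $e_i$, so $e_i$ and $e_{i+k}$ share both endpoints. Since each edge of the simple graph $K_n$ appears at most once in the ordering, this is impossible.

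For the lower bound I would construct an explicit edge ordering. A natural starting point is a decomposition of $K_n$ into regular factors: for odd $n = 2k+1$, Walecki's construction gives $k$ edge-disjoint Hamiltonian cycles, and for even $n = 2k$, it gives $k-1$ Hamiltonian cycles together with a perfect matching. Within a single Hamiltonian cycle $v_1 v_2 \cdots v_{2k+1} v_1$, I would list the edges in the interleaved cyclic order $v_1 v_2, v_3 v_4, \ldots, v_{2k+1} v_1, v_2 v_3, v_4 v_5, \ldots, v_{2k} v_{2k+1}$, so that every $k$ consecutive edges of this list cover $2k$ distinct vertices of the cycle and hence form a matching. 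The proposed ordering of $E(K_n)$ is then the concatenation of the orderings of the individual factors, with each successive factor rotated or reflected so that every window crossing the boundary between two factors remains a matching.

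The main obstacle is precisely this boundary analysis: within one factor the matching property is transparent, but a window straddling two consecutive factors mixes edges from both and must still avoid repeated vertices. Small experiments in $K_7$ already show that a naive concatenation can fail, and the cyclic symmetry of Walecki's decomposition must be exploited to rotate each new cycle so that its initial $k-1$ edges are disjoint from the final $k-1$ edges of the previous cycle. I expect the cleanest argument to choose the starting vertex of each cycle $C_{j+1}$ based on the tail of $C_j$ and verify by a short case analysis that a suitable rotation always exists. The even case $n = 2k$ requires the same idea applied to the leftover perfect matching at the end of the decomposition, but introduces no fundamentally new difficulty.
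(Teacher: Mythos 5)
Your upper bound is correct and is essentially the paper's argument: the odd case is the trivial size bound on a matching, and the even case is the same sliding-window contradiction used in Lemma~\ref{lem: Gen matching sequence bound} (specialised to $r=1$), where two overlapping perfect-matching windows would force a repeated edge. No complaints there.

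The lower bound, however, has a genuine gap, and it sits exactly at the step you defer. First, the condition you propose to arrange at each boundary --- rotate/reflect $C_{j+1}$ so that ``its initial $k-1$ edges are disjoint from the final $k-1$ edges of the previous cycle'' --- is unattainable if ``disjoint'' means vertex-disjoint: in $K_{2k+1}$ each block of $k-1$ matching edges covers $2k-2$ vertices, and $2(2k-2)>2k+1$ once $k\geq 3$, so no rotation or reflection can make the two blocks avoid each other (and edge-disjointness is automatic but useless). What is actually needed is the weaker sliding condition: for every $t$ with $1\leq t\leq k-1$, the last $t$ edges of the current factor and the first $k-t$ edges of the next factor share no vertex. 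Verifying this is the entire content of the lower bound, and it cannot be settled by ``a suitable rotation always exists'' plus a short case analysis; it requires exhibiting concrete orderings of the Walecki cycles and computing, for each $t$, the two relevant vertex sets. This is precisely what the paper's machinery does: for odd $n$ the labellings $\ell_i$ of the Hamiltonian cycles $H_i$ are written down explicitly and Lemma~\ref{lem: sequity gcd = 1 case} (taken with $r=1$, $u=0$) shows $ms(\ell_i,\ell_{i+1})\geq m$ by comparing the sets $W_0$ and $W_1$ of freed and newly used vertices; for even $n$ the analogous computation is Lemma~\ref{lem: Gen matching seq even n}, and Proposition~\ref{prop: Matching decomposition both} then concatenates the factors. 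Note also that the labellings there are fixed uniformly by the rotational structure of the decomposition, not chosen adaptively from the tail of the previous factor. So your plan identifies the right decomposition and the right within-factor ordering, but the boundary verification --- the heart of the proof --- is missing, and the specific sufficient condition you state for it is false for all $n\geq 7$. (The even case additionally needs the leftover perfect matching handled at the end, which your sketch also leaves unexamined.)
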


Brualdi, Kiernan, Meyer and Schroeder~\cite{MR2961987}
considered the {\em cyclic matching sequencibility}
$cms(G)$ of a graph $G$, which is the natural analogue of the matching sequencibility
for $G$ when cyclic orders are allowed.
They proved the cyclic analogue of Theorem~\ref{thm:Matching sequence}, below.

\begin{theorem}[Brualdi et al.~\cite{MR2961987}]\label{thm:Cyclic matching sequence}
For each integer $n \geq 4$,
\[
cms(K_n) = \left\lfloor\frac{n-2}{2}\right\rfloor\,.
\]
\end{theorem}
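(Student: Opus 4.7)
The bound splits by the parity of $n$. Any cyclic matching ordering is also a linear one, so $cms(K_n)\leq ms(K_n)=\lfloor (n-1)/2\rfloor$ by Theorem~\ref{thm:Matching sequence}; when $n$ is even this already equals $\lfloor (n-2)/2\rfloor$, so only odd $n$ needs further argument. For odd $n$, assume for contradiction a cyclic ordering $e_1,\dots,e_N$ (with $N=\binom{n}{2}$) in which every $s=(n-1)/2$ consecutive edges form a matching. Since $2s=n-1$, each window $W_i=\{e_i,\dots,e_{i+s-1}\}$ is a near-perfect matching, missing a unique vertex $m(i)$. Writing $V(W_{i+1})=(V(W_i)\setminus V(e_i))\cup V(e_{i+s})$, the assumption $m(i)=m(i+1)$ would force $V(e_{i+s})=V(e_i)$ and hence $e_{i+s}=e_i$, which is impossible. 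So $m$ changes at every step, and a refinement of the same vertex-set identity yields, for each $i$, a vertex $x_i$ with $e_i=\{m(i+1),x_i\}$ and $e_{i+s}=\{m(i),x_i\}$.

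From this local structure I plan to iterate around the whole cycle. Applying the swap identity at shifts $i$ and $i+s$ links $e_i, e_{i+s}, e_{i+2s},\ldots$ through their common vertices $x_j$, while each vertex occurs exactly $(n-1)/2$ times as a value of $m$ (the $n-1$ cyclic gaps between the positions of edges incident to a fixed vertex sum to $N$ and are each $\geq s$, leaving an excess of exactly $(n-1)/2$ spread across these gaps). The goal is to close the argument by showing that these equations, taken together around the whole cycle, over-determine the ordering and force two of the $e_j$ to coincide. For the lower bound I will exhibit an explicit cyclic ordering using a $1$-factorisation of $K_n$ for even $n$, or a near-$1$-factorisation for odd $n$. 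Concatenating the factors makes any run of $s$ edges contained in a single factor automatically a matching, so what remains is to arrange the last and first edges of each pair of consecutive factors --- including the cyclic wraparound --- so that every boundary window is a matching. For even $n$ this preserves the Alspach parameter $(n-2)/2$; for odd $n$ the wraparound constraint forces $s$ to drop by one from the non-cyclic bound, yielding $(n-3)/2=\lfloor (n-2)/2\rfloor$.

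The main obstacle is the odd-$n$ upper bound: extracting a genuine contradiction from the swap relations $e_i=\{m(i+1),x_i\}$ and $e_{i+s}=\{m(i),x_i\}$ looks like it will require either a careful counting of how the $(n-1)/2$ appearances of each vertex in the missing-vertex sequence can be distributed around a cyclic sequence of length $N$, or a parity invariant that is preserved by the step $i\mapsto i+1$ but is inconsistent when $n$ is odd. By contrast, the lower-bound construction is largely routine, provided the within-factor orderings are chosen carefully enough that every boundary --- including the cyclic one --- remains a matching.
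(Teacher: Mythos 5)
This theorem is not proved in the paper at all; it is quoted from Brualdi, Kiernan, Meyer and Schroeder, so there is no in-paper proof to compare against, and your proposal has to be judged on its own. On that basis it contains a genuine gap. The easy parts are fine: $cms(K_n)\leq ms(K_n)=\lfloor (n-1)/2\rfloor$ settles the upper bound for even $n$, and your local analysis for odd $n$ is correct as far as it goes --- with $s=(n-1)/2$ each window is a near-perfect matching missing a unique vertex $m(i)$, the missing vertex changes at every step, one gets $e_i=\{m(i+1),x_i\}$ and $e_{i+s}=\{m(i),x_i\}$, and each vertex is the missing vertex of exactly $(n-1)/2$ windows (your gap count $N-(n-1)s=(n-1)/2$ is right). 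But the contradiction itself is never derived: you state as a ``goal'' that the swap relations ``over-determine the ordering'' and you explicitly flag this as the main obstacle, hoping for an unspecified counting or parity invariant. That missing step is precisely the content of the theorem for odd $n$ --- it is the only thing separating $cms(K_n)=(n-3)/2$ from the trivial bound $(n-1)/2$ --- and it does not fall out of the relations you wrote down without a further, genuinely nontrivial argument (this is where the cited paper does its real work). As it stands your argument only yields $(n-3)/2\leq cms(K_n)\leq (n-1)/2$ for odd $n$, assuming the lower-bound construction is completed.

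A secondary, smaller issue: the lower bound is also only sketched. Saying one will ``arrange the last and first edges of each pair of consecutive factors, including the wraparound'' is exactly the part that requires explicit orderings and a boundary lemma of the type this paper proves (e.g.\ the orderings $\ell_{i,j}$ with $ms(\ell_{i,j},\ell_{i+1,j})\geq m-1$ in Section 4, fed into Proposition~\ref{prop: Matching decomposition both}); for even $n$ the $r=1$ case of Theorem~\ref{thm: Cyclic matching sequity for r} shows how this is done, and for odd $n$ one still has to exhibit orderings of a near-one-factorisation (or Hamiltonian decomposition) making every straddling window of length $(n-3)/2$ a matching, cyclically. That part is plausibly routine, but the odd-$n$ upper bound is the decisive missing piece, so the proposal is a plan rather than a proof.
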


The aim of this paper is to extend
Theorem~\ref{thm:Matching sequence} and
Theorem~\ref{thm:Cyclic matching sequence}
by generalising the notion of matching sequencibility.
In particular, for a graph $G$, $ms_r(G)$ denotes the analogue
of $ms(G)$ where consecutive edges form a subgraph
whose vertices each has degree at most~$r$.
Similarly, $cms_r(G)$ is defined analogously to
$ms_r(G)$ where we allow cyclic orderings of the edges of $G$.

\begin{conjecture}\label{conj: Kn}
Let $n \geq 3$ and $1 \leq r \leq n-2$ be integers.
Then
\[
ms_r(K_n) = \left\lfloor\frac{rn-1}{2}\right\rfloor
\quad \textrm{and} \quad
      \left\lfloor\frac{rn-1}{2}\right\rfloor - 1
 \leq cms_r(K_n)
 \leq \left\lfloor\frac{rn-1}{2}\right\rfloor\,.
\]
\end{conjecture}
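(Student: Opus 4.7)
For the upper bound $ms_r(K_n)\le \bigl\lfloor\tfrac{rn-1}{2}\bigr\rfloor$, which immediately implies the same bound for $cms_r(K_n)$ since every linear ordering is a cyclic ordering, the plan is to argue in two steps. First, the initial $s$ edges of any valid ordering form a subgraph $H$ of $K_n$ of maximum degree at most $r$, so $s = |E(H)| \le rn/2$; this already yields $s\le\bigl\lfloor\tfrac{rn-1}{2}\bigr\rfloor$ whenever $rn$ is odd. Second, when $rn$ is even, assume for contradiction that $s = rn/2$. Then equality in the degree-sum inequality forces $H$ to be $r$-regular, so sliding the window by one position (removing $e_1$ and appending $e_{s+1}$) the only vertices whose degree drops below $r$ are the two endpoints of $e_1$; for the new window to still have maximum degree at most $r$, the edge $e_{s+1}$ must join exactly those two endpoints, forcing $e_{s+1}=e_1$, a contradiction.

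For the lower bound on $ms_r(K_n)$ I would construct an explicit ordering of $E(K_n)$ in which every $s=\bigl\lfloor\tfrac{rn-1}{2}\bigr\rfloor$ consecutive edges form a subgraph of maximum degree at most $r$. The natural skeleton is to decompose $K_n$ into spanning subgraphs $F_1,\ldots,F_t$, each (nearly) $r$-regular with about $s$ edges, and to concatenate internal orderings $\sigma_1,\ldots,\sigma_t$ of the $F_i$. Any sliding window of length $s$ then meets at most two consecutive factors, so the substantive requirement is a boundary condition: the tail of $\sigma_i$ followed by the head of $\sigma_{i+1}$ must still have maximum degree at most $r$. The cyclic analogue additionally demands compatibility between $\sigma_t$ and $\sigma_1$, which typically costs one position in the window and matches the $-1$ in the conjectured lower bound for $cms_r(K_n)$.

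For the baseline case $r=1$, Theorem~\ref{thm:Matching sequence} already supplies an ordering with the required property; for $r\ge 2$ I would start from the classical decompositions of $K_n$ into Hamilton cycles (when $n$ is odd) or Hamilton cycles plus a perfect matching (when $n$ is even), group these $2$-factors into $r$-regular or near-$r$-regular factors, and within each factor order the edges to distribute the incidences evenly while ensuring that the end of $F_i$ uses vertices disjoint from those appearing at the start of $F_{i+1}$. I expect the main obstacle to lie precisely here: isolating a clean structural notion, a kind of \emph{boundary-compatible} decomposition, that encodes exactly these transition conditions, and then verifying that such decompositions exist for all (or almost all) combinations of parities of $n$ and $r$. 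The author's phrase ``defining and characterising selected decompositions of $K_n$'' suggests this is indeed the route, and the residual cases (presumably small $n$, or parities where the factors cannot be made $r$-regular) will correspond to those for which the exact identity is not established and only the one-edge gap in $cms_r$ survives.
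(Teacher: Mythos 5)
The statement you are trying to prove is a \emph{conjecture}: the paper itself does not prove it in full, but only verifies it in many cases (Theorems~\ref{thm: Matching sequity for r}--\ref{thm: Gen cyclic matching sequence odd n}), so any complete argument would have to contain genuinely new content. Your proposal does not: the only part you actually prove is the upper bound, and that argument is essentially identical to the paper's Lemma~\ref{lem: Gen matching sequence bound} (degree-sum bound, plus the sliding-window/$r$-regularity contradiction when $rn$ is even). Everything else is a programme, not a proof. The skeleton you describe --- decompose $K_n$ into (near-)$r$-regular spanning factors, order each one, and impose a tail-head compatibility condition at the seams --- is exactly what the paper formalises in Propositions~\ref{prop: Matching decomposition both}--\ref{prop: 2-regular decomposition odd r both}, but the entire substance lies in constructing orderings that satisfy those seam conditions: the explicit labellings $\ell_{i,j}$ of the matchings $M_{i,j}$ (for $n$ even) and $\ell_i$ of the Walecki Hamilton cycles and of the graphs $R_i$ (for $n$ odd), the verification lemmas (Lemmas~\ref{lem: Gen matching seq even n}, \ref{lem: sequity gcd = 1 case}, \ref{lem: Hamiltonian comparison}, \ref{lem: Hamiltonian comparison 1 and 3}, \ref{lem: 1 and 3 seq centre case}), and the re-indexing devices (Lemmas~\ref{lem: General cyclic ordering}--\ref{lem: General non cyclic ordering}). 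None of this is supplied or even sketched at a level where one could check it, and even with all of it the paper only reaches the conjecture for the parameter ranges in Theorems~\ref{thm: Matching sequity for r}--\ref{thm: Gen cyclic matching sequence odd n}; for example $ms_r(K_n)$ with $n,r$ both odd, $r<\frac{n-1}{2}$ and $\gcd(r,n-1)>1$, and $cms_r(K_n)$ with $n,r$ both odd and $r\neq\frac{n-1}{2}$, remain open.

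Two specific points in your sketch would fail as stated. First, when $n$ and $r$ are both odd there is no $r$-regular spanning factor at all (odd degree sum), so you cannot ``group the $2$-factors into $r$-regular factors'' and reduce to adjacent-factor compatibility; a window of length $\bigl\lfloor\frac{rn-1}{2}\bigr\rfloor$ necessarily straddles a half of some Hamilton cycle at each end, and the paper's Proposition~\ref{prop: 2-regular decomposition odd r both} consequently needs \emph{two} boundary conditions simultaneously (an $ms_3$ condition between cycles placed $u=\frac{r-1}{2}$ apart and an $ms$ condition between cycles placed $u+1$ apart), not a single tail-head condition between consecutive factors --- this is precisely why that parity case is the hard and still-open one. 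Second, your reading of the $-1$ in the cyclic bound as merely the cost of wrapping $\sigma_t$ around to $\sigma_1$ is not what the conjecture asserts: the paper conjectures that both values $\bigl\lfloor\frac{rn-1}{2}\bigr\rfloor$ and $\bigl\lfloor\frac{rn-1}{2}\bigr\rfloor-1$ are actually attained by $cms_r(K_n)$ for suitable $r,n$ (Theorem~\ref{thm: Cyclic matching sequity for r} attains the upper value even cyclically, e.g.\ for all even $n$), and which value occurs is constrained by the complementation equivalence of Theorem~\ref{thm: K_n comp equiv}, so the gap is a genuine open phenomenon rather than an artefact of the construction.
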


The main results include the three to follow which verify
the conjecture in many cases. In each result we assume $n \geq 3$
and $1 \leq r \leq n-2$.

\begin{theorem}\label{thm: Matching sequity for r}
If $n$ or $r$ is even,
or $n$ is odd and either $r\geq \frac{n-1}{2}$ or $\gcd(r,n-1) = 1$,
then
\[
  ms_r(K_n) = \left\lfloor\frac{rn-1}{2}\right\rfloor\,.
\]
\end{theorem}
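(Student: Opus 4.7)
The plan is to prove matching upper and lower bounds; the lower bound will come from explicit edge orderings built from suitable decompositions of $K_n$.

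For the \textbf{upper bound} $ms_r(K_n) \le \lfloor (rn-1)/2\rfloor$, note that $s$ consecutive edges forming a graph of maximum degree at most $r$ on $n$ vertices satisfy $2s \le rn$, so $s \le \lfloor rn/2\rfloor$, which already equals $\lfloor (rn-1)/2\rfloor$ when $rn$ is odd. When $rn$ is even, I would exclude $s = rn/2$: such a window would have to be exactly $r$-regular on all $n$ vertices, and so would every one-step shift of it. Comparing two adjacent windows, the edge $e_i$ leaving and the edge $e_{i+s}$ entering must have identical (unordered) endpoint pairs, forcing $e_i = e_{i+s}$ in the simple graph $K_n$, a contradiction.

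For the \textbf{lower bound}, the strategy is to decompose (or nearly decompose) $E(K_n)$ into subgraphs $H_1,\dots,H_k$, each of maximum degree at most $r$, order the edges within each $H_i$ along an Eulerian or Hamiltonian trail, and concatenate the lists. A window of $s=\lfloor (rn-1)/2\rfloor$ consecutive edges then either sits inside a single $H_i$ (automatic) or straddles two consecutive $H_i$'s, in which case the orderings must be chosen so that the ``tail'' of $H_i$ together with the ``head'' of $H_{i+1}$ still has maximum degree at most $r$. The case distinctions in the theorem correspond precisely to those in which a clean decomposition exists: a $1$-factorization when $n$ is even (grouped into $r$-factors); a Hamilton-cycle decomposition when $n$ is odd (grouped $r/2$ at a time into $r$-regular graphs when $r$ is even); and a starter-based cyclic construction on $\mathbb{Z}_{n-1}\cup\{\infty\}$ when $n$ and $r$ are both odd with $\gcd(r,n-1)=1$, the coprimality hypothesis being exactly what ensures the orbit of the starter hits each edge of $K_n$ exactly once. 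The remaining subcase $n$ odd, $r$ odd, $r\ge(n-1)/2$ I would treat by a direct construction that groups the Hamilton cycles so that each transition sits inside a small controlled region, exploiting the fact that $r$ is large enough that the tail and head together fit comfortably under the degree budget.

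The \textbf{main obstacle} throughout is controlling the transitions between consecutive pieces: a window straddling two factors can easily accumulate a vertex degree greater than $r$. I plan to handle this by choosing consecutive pieces to be related by a cyclic rotation of the vertex labels, so that the cross-boundary check reduces to verifying the degree condition on a single rotated window inside one factor. The genuinely delicate case is both $n$ and $r$ odd: here $K_n$ admits no $r$-factorization at all, so the ordering must weave in a partial factor, and the hypotheses $\gcd(r,n-1)=1$ or $r\ge(n-1)/2$ are precisely what keeps this weaving consistent.
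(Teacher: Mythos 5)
Your upper bound is correct and is essentially the paper's Lemma~\ref{lem: Gen matching sequence bound} (the same counting-plus-window-shift argument), and your high-level plan for the lower bound --- decompose $K_n$, order within pieces, concatenate, and control straddling windows --- is exactly the architecture of the paper. The genuine gap is that the decisive step is asserted rather than carried out. Everything hinges on exhibiting internal orderings of the pieces for which \emph{every} split of a straddling window (a suffix of one piece together with a prefix of a later piece) has maximum degree at most $r$, and your proposal defers this with ``the orderings must be chosen so that \dots''. Your claimed reduction --- take consecutive pieces related by a cyclic rotation so that the cross-boundary check becomes a check on a single rotated window inside one factor --- is not justified and does not hold as stated: a rotation permutes which vertices carry which partial degrees, so the compatibility of a suffix of $H_i$ with a prefix of $\sigma(H_i)$ is a genuine condition on the internal labelling, and an arbitrary Eulerian or Hamiltonian-trail ordering of an $r$-factor does not satisfy it. In the paper this is precisely where the work lies: the labellings are built with specific arithmetic spreads (e.g.\ labels proportional to $r^{-1}$ modulo $\frac{n-1}{2}$ when $\gcd(r,n-1)=1$) and the boundary conditions are verified vertex-by-vertex in Lemmas~\ref{lem: Gen matching seq even n}, \ref{lem: sequity gcd = 1 case}, \ref{lem: Hamiltonian comparison}, \ref{lem: Hamiltonian comparison 1 and 3} and~\ref{lem: 1 and 3 seq centre case}, before Propositions~\ref{prop: Matching decomposition both}--\ref{prop: 2-regular decomposition odd r both} can be applied (with the index bookkeeping of Lemmas~\ref{lem: General cyclic ordering} and~\ref{lem: General non cyclic ordering}, which also handles the case $r \nmid n-1$ that your ``grouped into $r$-factors'' step glosses over).

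The case in which $n$ and $r$ are both odd contains a further missing idea. Since $rn$ is odd, a window of $\frac{rn-1}{2}$ edges meets the Hamilton-cycle decomposition in two distinct patterns, so one needs \emph{two} simultaneous compatibility conditions: a $(\leq 3)$-degree condition between pieces $\frac{r-1}{2}$ apart and a matching condition between pieces $\frac{r+1}{2}$ apart (Proposition~\ref{prop: 2-regular decomposition odd r both}); nothing in your sketch accounts for this. Moreover, your explanations of the hypotheses are not the actual mechanisms: the Walecki decomposition exists for every odd $n$, and $\gcd(r,n-1)=1$ is used not to make a starter orbit cover all edges but to define the within-cycle labelling via $r^{-1}\bmod\frac{n-1}{2}$ (Lemma~\ref{lem: sequity gcd = 1 case}); and $r\geq\frac{n-1}{2}$ does not make the tail and head fit ``comfortably'' (the window size grows with $r$ as well) --- it is what permits an ordering of the cycle indices as in Lemma~\ref{lem: Label odd r and n}, so that pieces $\frac{r\pm1}{2}$ apart in the concatenation are adjacent in the original indexing, where the two boundary conditions can actually be checked (Lemma~\ref{lem: Hamiltonian comparison 1 and 3}); the boundary subcase $r=\frac{n-1}{2}$ needs yet another decomposition (the graphs $R_i$) and Lemma~\ref{lem: 1 and 3 seq centre case}. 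Without these explicit constructions and verifications the proposal is a plausible programme but not a proof.
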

\begin{theorem}\label{thm: Cyclic matching sequity for r}
If $n$ is even, or $n$ is odd and $r = \frac{n-1}{2}$,
then
\[
  cms_r(K_n) = \left\lfloor\frac{rn-1}{2}\right\rfloor\,.
\]
\end{theorem}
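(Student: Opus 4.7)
The plan is to prove $cms_r(K_n)=\lfloor(rn-1)/2\rfloor$ via matching upper and lower bounds.

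\textbf{Upper bound.} Suppose $e_0,\ldots,e_{N-1}$ (with $N=\binom{n}{2}$) is a cyclic ordering such that every window of $s$ consecutive edges has maximum degree at most $r$. Summing degrees gives $2s\le rn$, hence $s\le\lfloor rn/2\rfloor$; this already proves the bound when $rn$ is odd. When $rn$ is even and $s=rn/2$, each window is $r$-regular on all $n$ vertices. Two consecutive windows differ by removing one edge and adding another, and preservation of $r$-regularity forces the two exchanged edges to have identical endpoints---impossible since all $e_i$ are distinct. Therefore $s\le rn/2-1=\lfloor(rn-1)/2\rfloor$.

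\textbf{Lower bound for $n$ even.} Take a 1-factorization $K_n=F_0\cup F_1\cup\cdots\cup F_{n-2}$ and cyclically concatenate the matchings, with a chosen ordering of edges inside each. Any window of $s=rn/2-1$ edges spans at most $r+1$ consecutive matchings: $r-1$ are fully included (contributing degree $r-1$ at every vertex), and the two boundary matchings contribute, between them, exactly $n/2-1$ partial edges. The intra-matching orderings must be chosen so that, for each cyclic shift and each split $p+q=n/2-1$, the vertices covered by the last $p$ edges of the left boundary matching are disjoint from the vertices covered by the first $q$ edges of the right boundary matching (the one $r$ positions later). I would seek such an arrangement using a rotational 1-factorization of $K_n$ on vertex set $\mathbb{Z}_{n-1}\cup\{\infty\}$, converting the disjointness conditions into a finite set of identities in $\mathbb{Z}_{n-1}$.

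\textbf{Lower bound for $n$ odd and $r=(n-1)/2$.} Here $|E(K_n)|=rn$, and $K_n$ decomposes into $r$ edge-disjoint Hamilton cycles $C_1,\ldots,C_r$ (for instance via Walecki's construction). I would list each $C_i$ as a cyclic traversal and concatenate them. A partial segment of a Hamilton cycle is a path, contributing degree at most $2$ per vertex, while a fully included cycle contributes exactly~$2$. The orientations and starting vertices of the cycles must be chosen so that, in any window of length $\lfloor(rn-1)/2\rfloor$, the partial boundary paths and the fully included interior cycles together yield degree at most $r$ at every vertex. I expect the hardest step to be ensuring these boundary compatibility conditions hold for every cyclic shift simultaneously; this is likely where the paper's framework of ``selected decompositions'' enters crucially, letting the intra-block orderings interact compatibly with the wrap-around of the cyclic sequence.
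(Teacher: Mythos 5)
Your upper bound is correct and is essentially the paper's Lemma~\ref{lem: Gen matching sequence bound}. The lower bounds, however, are only strategy outlines: in both cases the entire content of the proof is the construction of the intra-block orderings and the verification of the boundary compatibility conditions, and these are exactly the parts you defer (``I would seek such an arrangement\ldots'', ``the orientations and starting vertices must be chosen so that\ldots''). For even $n$ the condition you isolate (last $p$ edges of one matching disjoint from the first $q$ edges of the matching $r$ positions later, for every split and every cyclic shift) is the right one, but it is not routine to satisfy with a rotational one-factorization in its natural order: the paper has to re-index the one-factors via $\gcd(r,n-1)$ (the two-parameter decomposition $M_{i,j}$ together with Lemma~\ref{lem: General cyclic ordering}) so that blocks that are $r$ apart in the concatenation are rotations by $1$ of each other, and then prove the compatibility explicitly (Lemma~\ref{lem: Gen matching seq even n}). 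Without that reduction there is no reason the ``finite set of identities in $\mathbb{Z}_{n-1}$'' you hope for is solvable, and the cyclic wrap-around must also be checked.

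The odd case has a more concrete flaw: listing each Hamilton cycle as a cyclic traversal cannot work. A window of length roughly $rn/2$ contains about $r/2-1$ full cycles (contributing degree $r-2$ everywhere) plus two boundary segments; under a traversal ordering each boundary segment is a path, so any vertex interior to one boundary path and lying on the other gets degree at least $r+1$, and since the two paths together have $n+1$ vertex slots they must intersect --- you would need them to meet only in common endpoints, for every split and every shift, which is precisely the delicate structure you have not supplied. The paper avoids this by ordering each $2$-regular block so that its first half and last half are matchings (alternating edges), so boundary pieces contribute degree at most $1$ or, in the odd-$r$ subcase, satisfy a $(\le 3)$-regular condition. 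Moreover, for $r=\frac{n-1}{2}$ odd the Walecki decomposition is unusable for the cyclic problem (the rotation-induced orderings do not close up under wrap-around, as the paper notes), and a different $2$-regular decomposition $R_0,\dots,R_{m-1}$, existing only for odd $m$, is built, with two separate compatibility lemmas ($ms_3(\ell_i,\ell_{i+1})\ge 3m+1$ and $ms(\ell_i,\ell_{i-1})\ge m$, Lemma~\ref{lem: 1 and 3 seq centre case}) feeding Proposition~\ref{prop: 2-regular decomposition odd r both}; your proposal does not distinguish the parities of $r$, does not notice that odd $r$ forces windows to straddle either one or two block boundaries (hence two different boundary conditions), and does not address the wrap-around obstruction. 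As it stands, the proposal identifies the right framework but leaves the essential constructions unproved, and its concrete choice of traversal orderings would fail.
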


\begin{theorem}\label{thm: Gen cyclic matching sequence odd n}
If $n$ is odd and $r$ is even, then
\[
       \left\lfloor\frac{rn-1}{2}\right\rfloor-1
  \leq cms_r(K_n)
  \leq \left\lfloor\frac{rn-1}{2}\right\rfloor\,.
\]
\end{theorem}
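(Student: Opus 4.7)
The plan is to establish the upper and lower bounds separately. For the upper bound, any cyclic ordering with parameter $s$, when cut at an arbitrary position, yields a linear ordering whose parameter is at least $s$. Hence $cms_r(K_n) \le ms_r(K_n)$, and applying Theorem~\ref{thm: Matching sequity for r} (which holds since $r$ is even) gives $cms_r(K_n) \le \lfloor (rn-1)/2 \rfloor$.

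For the lower bound I would construct a cyclic ordering of $E(K_n)$ of parameter $s = \lfloor (rn-1)/2 \rfloor - 1 = rn/2 - 2$. Since $n$ is odd, $K_n$ admits a Hamilton cycle decomposition $C_1, \ldots, C_{(n-1)/2}$ (for instance, the Walecki decomposition). The candidate cyclic ordering is the concatenation of the cycles, each traversed in some chosen direction from some chosen starting vertex, and read as a cyclic sequence.

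A window of $s$ consecutive edges in this ordering decomposes as $t$ full Hamilton cycles (each contributing degree exactly $2$ at every vertex) together with two partial arcs of sizes $a,b$ with $a,b \le n-1$ and $tn + a + b = s$. Since $s = rn/2 - 2$ and $a + b \le 2n - 2$, only $t \in \{r/2 - 2,\ r/2 - 1\}$ are possible (with $t = 0$ the sole case when $r = 2$). When $t = r/2 - 2$ we are forced into $a = b = n - 1$: each partial is then a Hamilton path contributing at most $2$ at every vertex, so the total vertex degree in the window is at most $(r-4) + 4 = r$. The critical case is $t = r/2 - 1$, where $a + b = n - 2$, and one must show that the two partial arcs together contribute at most $2$ at every vertex — equivalently, their interior vertex sets are disjoint and each avoids the endpoints of the other partial.

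The main obstacle is verifying this boundary condition in the critical case. My plan is to choose the Hamilton decomposition and the orientation and starting vertex of each $C_i$ so that, for every pair $C_i, C_j$ separated by exactly $r/2 - 1$ full cycles in the cyclic ordering, the interior of the length-$a$ suffix of $C_i$ and the interior of the length-$b$ prefix of $C_j$ are disjoint for all admissible splits $a + b = n - 2$. In a Walecki-type decomposition the $C_i$ are rotations of a common starter, so this reduces to a combinatorial identity on the position function of the starter that can be checked case-by-case. Should the standard Walecki decomposition not satisfy the identity outright, additional freedom (such as reversing alternate cycles, permuting the order of the $C_i$, or passing to one of the \emph{selected} Hamilton decompositions engineered elsewhere in the paper) should close the remaining cases and yield the claimed lower bound.
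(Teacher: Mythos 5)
Your upper bound is fine (indeed the paper's Lemma~\ref{lem: Gen matching sequence bound} gives $cms_r(K_n)\leq\left\lfloor\frac{rn-1}{2}\right\rfloor$ directly, with no need to invoke Theorem~\ref{thm: Matching sequity for r}), and your window analysis for the lower bound is correct as far as it goes: after the counting, the only problematic windows consist of $\frac{r}{2}-1$ full cycles plus a suffix of one cycle and a prefix of another with $a+b=n-2$ edges, and there the two arcs must jointly have maximum degree $2$. But that junction condition is the entire content of the theorem, and it is exactly the part you defer; moreover, with your default choice (plain traversals of the cycles) it provably fails where the difficulty actually sits. Writing $q(v),p(v)$ for the positions of a vertex $v$ in the traversals of the earlier and later cycle, the requirement ``maximum degree $\leq 2$ for \emph{every} split $a+b=n-2$'' amounts to $p(v)\geq q(v)-2$ for all $v$ (plus endpoint caveats). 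For consecutive Walecki rotations $H_i,H_{i+1}$ started at $\infty$ this holds with equality, but for $H_i,H_{i+2}$ the positional shift is $4$ and it fails, producing a vertex of degree $r+1$ in some window of size $\frac{rn}{2}-2$. This cannot be dodged by permuting the cycles: the pairs you must control are those at cyclic distance $\frac{r}{2}$ modulo $m=\frac{n-1}{2}$ in the concatenation, and under any re-indexing these pairs form $\gcd(\frac{r}{2},m)$ cyclic blocks; a cyclic block of length at least $3$ cannot have all consecutive Walecki-index differences equal to $1$, so some pair at distance $2$ (or more) is unavoidable whenever $m/\gcd(\frac{r}{2},m)\geq 3$, and reversing cycles or shifting their starting vertices does not restore the one-sided positional condition there. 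So the ``additional freedom should close the remaining cases'' step is a genuine gap, not a routine check.

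The paper's proof resolves precisely this obstruction by \emph{not} ordering each cycle as a traversal: each $H_i$ is ordered by alternating edges (the orderings $\ell_i$ of Section~\ref{sec:proof of theorem odd n}, in which every $m$ consecutive edges form a matching), and the junction lemma becomes $ms_2(\ell_i,\ell_j)\geq 2m+1-|j-i|$ (Lemma~\ref{lem: Hamiltonian comparison}), which still yields the needed bound $n-2$ when $|i-j|=2$. Lemma~\ref{lem: General cyclic ordering} combined with the zig-zag orderings $\alpha_{l,t}$ then permutes the cycles so that every pair at distance $\frac{r}{2}$ in the cyclic concatenation has Walecki-index difference at most $2$, and Proposition~\ref{prop: 2-regular decomposition both} with $\epsilon=2$ gives $cms_r(K_n)\geq\frac{rn}{2}-2=\left\lfloor\frac{rn-1}{2}\right\rfloor-1$ (the unavoidable difference $2$ at the wrap-around is exactly where the ``$-1$'' comes from). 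To complete your argument you would need both an analogue of that graceful junction lemma for your per-cycle orderings and the explicit reordering; as written, the proposal is a plan for the crux rather than a proof of it.
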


One might ask which of the above bounds holds for which values of $r$ and $n$.
We discuss this question at the end of the paper and prove the following theorem
which is the fourth and final of our main results.

\begin{theorem}\label{thm: K_n comp equiv}
For odd integers $r$ and $n$,
\[
  cms_r(K_n)       = \left\lfloor\frac{rn-1}{2}\right\rfloor \quad \textrm{if and only if }\quad
  cms_{n-1-r}(K_n) = \left\lfloor\frac{(n-1-r)n-1}{2}\right\rfloor \,.
\]
\end{theorem}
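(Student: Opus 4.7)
My plan is to establish the stronger claim that, for odd $r$ and odd $n$ with $1\leq r\leq n-2$, a cyclic ordering $\sigma$ of $E(K_n)$ witnesses $cms_r(K_n) = \lfloor(rn-1)/2\rfloor$ if and only if the \emph{same} $\sigma$ witnesses $cms_{n-1-r}(K_n) = \lfloor((n-1-r)n-1)/2\rfloor$. Since $r\mapsto n-1-r$ is an involution on the odd integers in $[1,n-2]$, this reduces the theorem to checking one direction of the window-condition equivalence on a fixed $\sigma$.

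Write $r' = n-1-r$, $s = (rn-1)/2$, $s' = (r'n-1)/2$ (both integers, since $rn$ and $r'n$ are odd), and $m = \binom{n}{2}$, so that $s+s'+1 = m$. A preliminary structural observation is that any subgraph of $K_n$ with exactly $s$ edges and maximum degree at most $r$ has degree sum $2s = rn-1$, forcing exactly one vertex to have degree $r-1$ and all others to have degree $r$; I will call such a subgraph \emph{nearly $r$-regular} with \emph{deficit vertex} $a$. Assume $\sigma$ satisfies the $r$-window condition, list the edges cyclically as $e_1,\dots,e_m$, let $W_i = \{e_i,\dots,e_{i+s-1}\}$, and write $a_i$ for the deficit vertex of $W_i$.

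The heart of the argument is a degree-change analysis moving from $W_i$ to $W_{i+1}$, which deletes $e_i$ and inserts $e_{i+s}$. Comparing the two nearly $r$-regular degree vectors rules out $a_i = a_{i+1}$ (this would force $e_i$ and $e_{i+s}$ to have identical endpoint sets, contradicting their distinctness), and then reading off the change $[v\in e_{i+s}] - [v\in e_i]$ at each vertex yields $a_i\in e_{i+s}$, $a_{i+1}\in e_i$, and a common ``third'' vertex $u_i\notin\{a_i,a_{i+1}\}$ satisfying $e_i = \{a_{i+1}, u_i\}$ and $e_{i+s} = \{a_i, u_i\}$. The crucial outcome of this step is that the newly inserted edge $e_{i+s}$ always contains the current deficit vertex $a_i$.

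To finish, the $(s+1)$-edge set $W_i\cup\{e_{i+s}\}$ has $a_i$ lifted from $r-1$ to $r$ and $u_i$ lifted from $r$ to $r+1$, so its minimum degree is $r$ and its $K_n$-complement has maximum degree at most $n-1-r = r'$; but this complement is precisely the $s'$-window of $\sigma$ starting at $e_{i+s+1}$ (using $s+s'+1 = m$), and varying $i$ exhausts all $s'$-windows, giving the $r'$-window condition on $\sigma$. The main obstacle is the bookkeeping in the degree-change analysis: once the nearly $r$-regular structure and the resulting edge-swap pattern are pinned down, the complementary window condition drops out essentially for free.
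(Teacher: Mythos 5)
Your proposal is correct and is essentially the paper's own proof seen from the complementary side: both arguments rest on the identity $s+s'+1=\binom{n}{2}$ together with the rigidity forced by the odd degree sum (each extremal window has exactly one defect vertex), and your deficit-vertex tracking across consecutive $r$-windows, culminating in the fact that $W_i\cup\{e_{i+s}\}$ has minimum degree $r$, is precisely the complementary, direct rendering of the paper's contradiction argument that the interior $E'$ of an $(s'+1)$-window on the complement side has maximum degree at most $n-1-r$. The only cosmetic differences are that the paper channels the complementation through the general Lemma~\ref{lem: comp cond} on complementary condition families rather than inlining it, and both proofs finish the same way, invoking the upper bound of Lemma~\ref{lem: Gen matching sequence bound} and the involution $r\mapsto n-1-r$.
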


The paper is organised as follows.
In Section \ref{Preliminaries}
we generalise the methods of
\cite{MR2394738} and \cite{MR2961987},
expressed as Propositions
\ref{prop: Matching decomposition both}--\ref{prop: 2-regular decomposition odd r both}.
The propositions allow us to reduce the problem
of determining $ms_r(G)$ and $cms_r(G)$
to ordering subgraphs of $G$ which are partially
$r$-sequenceable for a smaller value of $r$.
However, the parities of $n$ and $r$ play
a crucial role in the effectiveness of
Propositions
\ref{prop: Matching decomposition both}--\ref{prop: 2-regular decomposition odd r both}:
the case when $n$ is odd is trickier and more so
when $r$ is also odd.

Section~\ref{sec:decompositions-of-Kn} defines
the {\em Walecki decomposition}~\cite{MR2394738} and other decompositions of $K_n$.
These are central to the proofs of Theorems~\ref{thm: Matching sequity for r}--\ref{thm: Gen cyclic matching sequence odd n};
those are presented in Sections \ref{sec: Gen matching sequence}--\ref{sec: r matching seq odd r centre case}.
Section~\ref{sec: Gen} presents the proof of Theorem \ref{thm: K_n comp equiv}
and, as part of that proof,
we consider sequencibility when certain general conditions
are placed on consecutive edges of orderings of graphs.
Section~\ref{sec: Con} concludes the paper with a discussion on Conjecture~\ref{conj: Kn} and related open problems,
and we provide some recursive bounds on $ms_r(G)$ and $cms_r(G)$ for general graphs $G$
as well as for the complete $k$-graph $\mathcal{K}^k_n$;
see Proposition~\ref{prop: 1 to r} and Theorem~\ref{thm:Katonaplus}, respectively.

\section{Preliminaries}
\label{Preliminaries}

\noindent
In this paper, graphs will always be simple.
A matching of a graph $G$ is a subgraph $M$ in which each vertex has degree~$1$.
A graph $G$ is {\em $( \leq r)$-regular} if each of its vertices has degree at most~$r$.
If every vertex has degree equal to $r$, then $G$ is {\em $r$-regular}.
In particular, a matching of a graph is a $1$-regular subgraph.
For an integer $n$, let $[n]:= \{0,1, \ldots , n-1 \}$, where $[0] = \emptyset$.
An {\em ordering} or {\em labelling} of a graph $G = (V,E)$ is a bijective function
$\ell \;:\; E \rightarrow [|E|]$.
The image of $e$ under~$\ell$ is called the {\em label} of $e$.
The edges $e_0 , \ldots , e_{s-1}$ are {\em consecutive} in $\ell$
if the labels of $e_0 , \ldots , e_{s-1}$ are consecutive integers.
For an ordering $\ell$ of a graph~$G$,
we let $ms_r(\ell)$ denote the largest integer~$s$
for which every $s$ consecutive edges of $\ell$ form a $(\leq r)$-regular subgraph of~$G$.
We define $ms_r(G)$ to be the maximum value of $ms_r(\ell)$ over all orderings $\ell$ of $G$.
In particular, the special case $ms_1(G)$, which we also denote as $ms(G)$,
is the same number as presented in the Introduction.
The edges $e_0 , \ldots , e_{s-1}$ of a graph $G = (V,E)$ are {\em cyclically consecutive} in $\ell$
if the labels of $e_0 , \ldots , e_{s-1}$ are consecutive integers modulo $|E|$.
We define $cms_r(\ell)$ and $cms_r(G)$ analogously to $ms_r(\ell)$ and $ms_r(G)$,
respectively, where we allow cyclically consecutive edges.
If $G$ is a $(\leq r)$-regular graph, then, by definition, $cms_r(G)= ms_r(G)=|E(G)|$.
So for the remainder of the paper, we only consider the more interesting case in which
$r$ is strictly less than the maximum degree of a vertex of $G$, denoted by $\Delta(G)$.
\begin{lemma}\label{lem: Gen matching sequence bound}
Let $G$ be a graph on $n$ vertices with $r<\Delta(G)$,
then
\[
  cms_r(G) \leq ms_r(G) \leq \left\lfloor\frac{rn-1}{2}\right\rfloor \,.
\]
\end{lemma}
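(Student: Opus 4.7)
The plan is to handle the two inequalities separately. For $cms_r(G) \leq ms_r(G)$, I would observe that a cyclic ordering achieving $cms_r(G) = s$ can be cut at an arbitrary position to produce a linear ordering; any window of $s$ linearly consecutive edges is also a window of $s$ cyclically consecutive edges, so the $(\leq r)$-regularity is inherited, giving $ms_r(G) \geq s$.

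For the upper bound on $ms_r(G)$, fix an ordering $\ell$ of $E(G)$ and set $s := ms_r(\ell)$. Any window $W$ of $s$ consecutive edges is $(\leq r)$-regular, so a handshake count yields $2s \leq rn$, i.e., $s \leq \lfloor rn/2 \rfloor$. When $rn$ is odd this already equals $\lfloor (rn-1)/2 \rfloor$ and we are done, so the work lies entirely in excluding $s = rn/2$ when $rn$ is even.

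Assuming $s = rn/2$ with $rn$ even, equality in the handshake bound forces every window to be a spanning $r$-regular subgraph of $G$. If $|E(G)| = s$, then $G$ itself would be $r$-regular, contradicting $r < \Delta(G)$. Otherwise $|E(G)| \geq s+1$, and I would compare the windows $W_0 = \{e_0, \ldots, e_{s-1}\}$ and $W_1 = \{e_1, \ldots, e_s\}$: deleting $e_0$ from the $r$-regular graph $W_0$ drops the degrees of its two endpoints from $r$ to $r-1$ while leaving all other degrees at $r$, so for $W_1$ to be $r$-regular the edge $e_s$ must share both endpoints with $e_0$. Since $G$ is simple this forces $e_0 = e_s$, contradicting the injectivity of $\ell$. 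This shift argument is the only non-routine step; the rest of the proof is immediate from the definitions and the handshake lemma.
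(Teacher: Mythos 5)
Your proposal is correct and follows essentially the same route as the paper: the handshake bound gives $s \leq \lfloor rn/2 \rfloor$, and when $rn$ is even the case $s = rn/2$ is excluded by the same shift-by-one comparison of two consecutive windows, forcing $e_0 = e_s$ and contradicting the injectivity of the ordering. Your explicit treatment of the corner case $|E(G)| = s$ (where the second window does not exist) is a small point the paper leaves implicit, but it does not change the argument.
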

\begin{proof}
If $rn$ is odd,
then a $(\leq r)$-regular graph on $n$ vertices can have at most $\frac{rn-1}{2}$ edges
and so $ms_r(G) \leq \left\lfloor\frac{rn-1}{2}\right\rfloor$.
If $rn$ is even,
then a $(\leq r)$-regular graph on $n$ vertices can have at most $\frac{rn}{2}$ edges.
If $\ell$ is a labelling of $G$ satisfying $ms_r(\ell) = \frac{rn}{2}$, then
the edges $\ell^{-1}(0), \ldots , \ell^{-1}(\frac{rn-2}{2})$ form a $r$-regular graph as do the edges
$\ell^{-1}(1), \ldots , \ell^{-1}(\frac{rn}{2})$. This means the edges
$\ell^{-1}(1), \ldots , \ell^{-1}(\frac{rn-2}{2})$, form a graph in which
every vertex has degree $r$ except two which have degree $r-1$. Therefore,
$\ell^{-1}(0) = \ell^{-1}(\frac{rn}{2})$, a contradiction.
Thus, $ms_r(G) \leq \left\lfloor\frac{rn-1}{2}\right\rfloor$.
The inequality $cms_r(G) \leq ms_r(G)$ is trivially true by definition.
\end{proof}
\noindent
For disjoint graphs $G_0 ,\ldots , G_{a-1}$
on the same vertex set $V$,
with labellings $\ell_{0} ,\ldots ,\ell_{a-1}$ respectively,
let $\ell_0 \vee\cdots\vee \ell_{a-1}$ denote
the ordering $\ell$ of $G= (V ,\bigcup_{i=0}^{a-1} E(G_i))$
defined by
$\ell(e_{ij}) = \ell_j (e_{ij}) + \sum_{l=0}^{j-1} |E(G_l)|$
where $e_{ij} \in E(G_j)$ for all $i$ and $j$.
Let $s$ be an integer and
$G$ and $G'$ be disjoint graphs on the same vertex set $V$
and each having at least $s-1$ edges.
Also, let $G$ and $G'$ have labellings $\ell$ and $\ell'$, respectively,
and let $G_s$ be the subgraph of $\left( V,E(G) \cup E(G')\right)$
that consists of the last $s-1$ edges of $\ell$ and the
first $s-1$ edges of $\ell'$.
Then we will let $\ell \vee_s \ell'$ denote the ordering of $G_s$ for which
the edges of $G_s$ appear in the same order as they do in $\ell \vee \ell'$.
Now we define $ms_r(\ell , \ell')$
to be the largest integer $s$ such that
$\ell \vee_{s} \ell'$
has $r$-matching sequencibility~$s$.

An {\em $r$-regular decomposition} of a graph $G$ is
a set of edge-disjoint $r$-regular subgraphs of $G$ that partition the edge set of~$G$.
A {\em $(\leq r)$-regular graph decomposition}
and a {\em matching decomposition} are defined analogously.

The main method used to prove Theorems
\ref{thm: Matching sequity for r}--\ref{thm: Gen cyclic matching sequence odd n}
is to decompose $K_n$ into regular parts
(regular in the sense that every vertex has the same degree),
then order the edges in each part,
and concatenate the parts to obtain an ordering for $K_n$.
The following propositions will facilitate this,
under certain conditions.
The propositions are given in more generality than we will require them,
as they may be useful for other matching sequencibility problems.
In each proposition,
the subscripts of the orderings $\ell_i$ are taken modulo $t$:
$\ell_{i+u} = \ell_{i'}$ exactly when $i' \equiv i+u \mod t$.

\begin{proposition}\label{prop: Matching decomposition both}
Let $G$ be a graph that decomposes into matchings $M_0 ,\ldots , M_{t-1}$, each with $n$ edges
and orderings $\ell_0 , \ldots , \ell_{t-1}$, respectively.
Suppose, for some $\epsilon \in [n]$ and $r < \Delta(G)$,
that $ms(\ell_i ,\ell_{i+r}) \geq n-\epsilon$ for all $i\in [t-r]$.
Then $ms_r(G) \geq rn-\epsilon$,
and if $ms(\ell_i ,\ell_{i+r}) \geq n-\epsilon$ for all $i\in [t]$,
then $cms_r(G) \geq rn-\epsilon$.
\end{proposition}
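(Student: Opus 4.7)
The natural plan is to form the concatenated ordering $\ell := \ell_0 \vee \ell_1 \vee \cdots \vee \ell_{t-1}$ of $G$ and show that every window $W$ of $L := rn - \epsilon$ consecutive edges of $\ell$ is $(\leq r)$-regular. Writing the starting position of $W$ as the $p'$-th edge of some matching $M_i$ (with $p' \in [n]$), a direct calculation using $L \leq rn$ shows that $W$ touches exactly $r$ consecutive matchings when $p' \leq \epsilon$ and exactly $r + 1$ consecutive matchings when $p' \geq \epsilon + 1$.

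In the first case $W$ is a subgraph of $M_i \cup M_{i+1} \cup \cdots \cup M_{i+r-1}$, and since each matching contributes at most $1$ to every vertex's degree, $W$ is automatically $(\leq r)$-regular. In the second case $W$ consists of the last $n - p'$ edges of $\ell_i$, all of $M_{i+1}, \ldots, M_{i+r-1}$, and the first $p' - \epsilon$ edges of $\ell_{i+r}$; the $r - 1$ complete middle matchings already contribute at most $r - 1$ to each vertex's degree, so it is enough to show that the remaining $(n - p') + (p' - \epsilon) = n - \epsilon$ ``boundary'' edges together form a matching. The key point is that these boundary edges appear as $n - \epsilon$ consecutive edges inside $\ell_i \vee_{n-\epsilon} \ell_{i+r}$: the bounds $\epsilon + 1 \leq p' \leq n - 1$ guarantee that both the relevant suffix of $\ell_i$ and the relevant prefix of $\ell_{i+r}$ have lengths at most $n - \epsilon - 1$, so they fit inside the $2(n - \epsilon - 1)$ edges making up $\ell_i \vee_{n-\epsilon} \ell_{i+r}$. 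The hypothesis $ms(\ell_i, \ell_{i+r}) \geq n - \epsilon$ then forces them to form a matching, and hence every vertex of $W$ has degree at most $(r - 1) + 1 = r$.

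The cyclic statement is proved identically, reading all indices modulo $t$; the only change is that the hypothesis is now needed for every $i \in [t]$ rather than just $i \in [t - r]$, in order to control the windows that wrap around between $\ell_{t-1}$ and $\ell_0$. I expect the only genuine obstacle to be the bookkeeping in the second case—namely, verifying that for every possible alignment of a straddling window the ``suffix of $\ell_i$'' and ``prefix of $\ell_{i+r}$'' pieces always contribute exactly $n - \epsilon$ edges in total, matching precisely the guarantee provided by $ms(\ell_i, \ell_{i+r}) \geq n - \epsilon$.
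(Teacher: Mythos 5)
Your proposal is correct and follows essentially the same argument as the paper: concatenate the orderings $\ell_0 \vee \cdots \vee \ell_{t-1}$, note that a window either lies inside $r$ consecutive matchings (trivially $(\leq r)$-regular) or straddles $r+1$, and in the latter case the $n-\epsilon$ boundary edges are consecutive in $\ell_i \vee_{n-\epsilon} \ell_{i+r}$ and hence form a matching by hypothesis, while the $r-1$ full middle matchings add at most $r-1$ to each degree. This is precisely the template the paper uses in its proof of Proposition~\ref{prop: 2-regular decomposition odd r both}, of which the proof of Proposition~\ref{prop: Matching decomposition both} is the stated (omitted) analogue.
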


\begin{proposition}\label{prop: 2-regular decomposition both}
Let $r < \Delta(G)$ be even, set $u := \frac{r}{2}$,
and let $G$ be a graph that decomposes into $(\leq 2)$-regular graphs $R_0 ,\ldots , R_{t-1}$,
each with $n$ edges, and with orderings $\ell_0 , \ldots , \ell_{t-1}$,
respectively.
Suppose, for some non-zero $\epsilon \in \bigl[\lceil \frac{n}{2} \rceil \bigr] $,
that $ms_2 \left(\ell_i ,\ell_{i+u}\right) \geq n-\epsilon$ for all $i \in [t-u]$.
Then
$ms_r(G) \geq \frac{rn}{2}-\epsilon$,
and if $ms_2 \left(\ell_i ,\ell_{i+u}\right) \geq n-\epsilon$ for all $i\in [t]$,
then $cms_r(G) \geq \frac{rn}{2}-\epsilon$.
\end{proposition}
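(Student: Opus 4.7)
The plan is to take the concatenated labelling $\ell := \ell_0 \vee \ell_1 \vee \cdots \vee \ell_{t-1}$ of $G$ and to verify that every window $W$ of $s := un - \epsilon = \frac{rn}{2} - \epsilon$ (cyclically) consecutive edges induces a $(\leq r)$-regular subgraph of $G$. Since each part $R_i$ has exactly $n$ edges and $s < un$, a short positional count shows that $W$ can span at most $u+1$ consecutive parts of the decomposition, so the argument reduces to a case analysis on the number of parts spanned.

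In the easier case, $W$ is contained in a union of at most $u$ consecutive parts $R_i, R_{i+1}, \ldots, R_{i+u-1}$. Since the parts are edge-disjoint and each is $(\leq 2)$-regular, their union is $(\leq 2u) = (\leq r)$-regular, and so is $W$; nothing further has to be checked.

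The critical case is when $W$ spans exactly $u+1$ consecutive parts $R_i, R_{i+1}, \ldots, R_{i+u}$. Then $R_{i+1}, \ldots, R_{i+u-1}$ lie entirely inside $W$ and contribute degree at most $2(u-1) = r-2$ at every vertex. The remaining edges of $W$ are the last $a$ edges of $\ell_i$ together with the first $b$ edges of $\ell_{i+u}$ for integers $a, b \geq 1$ with $a + b = n - \epsilon$. These edges are precisely one of the windows of $n-\epsilon$ consecutive edges of $\ell_i \vee_{n-\epsilon} \ell_{i+u}$, so the hypothesis $ms_2(\ell_i,\ell_{i+u}) \geq n - \epsilon$ tells us they form a $(\leq 2)$-regular subgraph. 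Combining this with the middle-part contribution gives a vertex-degree bound of $r - 2 + 2 = r$, as required.

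For the non-cyclic bound $ms_r(G) \geq \frac{rn}{2} - \epsilon$, windows of $\ell$ cannot wrap around, so the critical case occurs only for $i \in [t-u]$, which is exactly the range of indices for which the hypothesis is assumed. The cyclic bound follows from the identical argument applied to cyclically consecutive windows, with indices read modulo $t$, using the stronger hypothesis for all $i \in [t]$. The main delicate point will be the careful identification of $(a,b)$ in the critical case and the confirmation that $a, b \geq 1$—if either were $0$ the window would in fact span only $u$ parts and the easy case would apply—so that the partial boundary pair is genuinely a window of $\ell_i \vee_{n-\epsilon} \ell_{i+u}$ to which the hypothesis legitimately applies.
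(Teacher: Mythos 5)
Your argument is correct and is essentially the proof the paper intends: the paper omits the proof of this proposition as being analogous to its proof of Proposition~\ref{prop: 2-regular decomposition odd r both}, which likewise concatenates the orderings, counts how many parts a window of $\frac{rn}{2}-\epsilon$ consecutive edges can span, and invokes the $ms_2(\ell_i,\ell_{i+u})$ hypothesis only in the boundary case. Your handling of the easy case ($\leq u$ parts, degree at most $2u=r$) and the critical case ($u+1$ parts, with $a,b\geq 1$ so the boundary edges form a genuine window of $\ell_i \vee_{n-\epsilon} \ell_{i+u}$) matches that template exactly.
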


\begin{proposition}\label{prop: 2-regular decomposition odd r both}
Let $3 \leq r < \Delta(G)$ be odd, set $u := \frac{r-1}{2}$,
and let $G$ be a graph that decomposes into $(\leq 2)$-regular graphs $R_0 ,\ldots , R_{t-1}$,
each with $n$ edges, and with orderings $\ell_0 , \ldots , \ell_{t-1}$,
respectively.
Suppose, for some non-zero $\epsilon \in \bigl[\lceil \frac{n}{2} \rceil \bigr]$,
that $ms\left(\ell_{i} ,\ell_{i+u+1}\right) \geq \lceil \frac{n}{2}\rceil-\epsilon$ for all $i \in [t-u-1]$
and $ms_3\left(\ell_{i} ,\ell_{i+u}\right) \geq \lceil \frac{3n}{2} \rceil-\epsilon$ for all $ i \in[t-u]$.
Then $ms_r(G) \geq \lfloor \frac{rn+1}{2} \rfloor-\epsilon$,
and if $ms\left(\ell_{i} ,\ell_{i+u+1}\right) \geq \lceil\frac{n}{2} \rceil-\epsilon$
and $ms_3\left(\ell_{i} ,\ell_{i+u}\right) \geq \lceil \frac{3n}{2}\rceil-\epsilon$
for all $i \in [t]$,
then $cms_r(G) \geq \lfloor \frac{rn+1}{2} \rfloor-\epsilon$.
\end{proposition}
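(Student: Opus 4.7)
The plan is to concatenate the given orderings: form $\ell := \ell_0 \vee \ell_1 \vee \cdots \vee \ell_{t-1}$ (interpreting indices modulo $t$ in the cyclic version), and show that every window of $s := \lfloor (rn+1)/2 \rfloor - \epsilon$ consecutive edges of $\ell$ is a $(\leq r)$-regular subgraph of $G$. Writing $r = 2u + 1$, one immediately gets $s = un + \lceil n/2 \rceil - \epsilon$.

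Next I would examine the structure of such a window. It spans some contiguous blocks $R_i, R_{i+1}, \ldots, R_j$, where $R_i$ contributes its last $a$ edges and $R_j$ its first $b$ edges (with $1 \leq a, b \leq n$) and each intermediate block contributes all of its $n$ edges. Since $1 \leq \epsilon \leq \lceil n/2 \rceil - 1$ forces $un < s < (u+1)n$, one deduces that $j - i \in \{u, u+1\}$, which splits the verification into two cases.

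In Case A, $j - i = u$. The $u - 1$ middle blocks $R_{i+1}, \ldots, R_{i+u-1}$ are taken whole and each is $(\leq 2)$-regular, so they contribute at most $2(u-1)$ to the degree of any vertex. The partial pieces from $R_i$ and $R_{i+u}$ satisfy $a + b = s - (u-1) n = \lceil 3n/2 \rceil - \epsilon$, so by the hypothesis $ms_3(\ell_i, \ell_{i+u}) \geq \lceil 3n/2 \rceil - \epsilon$ their union is $(\leq 3)$-regular, giving total degree at most $2(u-1) + 3 = r$. In Case B, $j - i = u + 1$, the $u$ middle blocks contribute at most $2u$, and now $a + b = s - un = \lceil n/2 \rceil - \epsilon$, so by $ms(\ell_i, \ell_{i+u+1}) \geq \lceil n/2 \rceil - \epsilon$ the partial pieces form a matching, yielding total degree at most $2u + 1 = r$.

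The main obstacle I foresee is careful bookkeeping: pinning down that $j - i \in \{u, u+1\}$ from the ranges of $a$, $b$, and $\epsilon$, and matching the exact sums $a + b$ that arise with the hypothesis thresholds $\lceil 3n/2 \rceil - \epsilon$ and $\lceil n/2 \rceil - \epsilon$ (which is precisely why the proposition mentions these two specific values). For the cyclic statement, the same case analysis goes through with indices read modulo $t$, which is the reason one must demand both hypotheses for all $i \in [t]$ rather than just for $i \in [t-u-1]$ and $i \in [t-u]$.
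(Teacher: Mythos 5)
Your proposal is correct and follows essentially the same route as the paper's proof: concatenate the block orderings, observe that a window of $\lfloor\frac{rn+1}{2}\rfloor-\epsilon$ edges spans either $u+1$ or $u+2$ blocks, and handle the two cases by combining the $(\leq 2)$-regularity of the full middle blocks with the $ms_3$ hypothesis (boundary pieces summing to $\lceil\frac{3n}{2}\rceil-\epsilon$) or the $ms$ hypothesis (boundary pieces summing to $\lceil\frac{n}{2}\rceil-\epsilon$), exactly as in the paper's cases $a=2$ and $a=1$.
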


For a graph $G$ with ordering $\ell$,
$L_{\ell}(G)$ denotes the edges of $G$ listed the same order as~$\ell$ and say
that $\ell$ {\em corresponds} to $L_{\ell}(G)$;
i.e., if $e_0 ,\ldots, e_{k-1}$ is a list of the edges of $G$,
then $\ell$ corresponds to that list if $\ell(e_i) = i$ for all $i \in [k]$.
Also, for graphs
$G_0 , \ldots ,G_{a-1}$ with labellings $\ell_0 ,\ldots, \ell_{a-1}$, respectively,
$L_{\ell_0}(G_0) \vee\cdots\vee L_{\ell_{a-1}}(G_{a-1})$ denotes the lists of edges which the ordering
$\ell_0 \vee\cdots\vee \ell_{a-1}$ corresponds to.
The proofs of Proposition
\ref{prop: Matching decomposition both}--\ref{prop: 2-regular decomposition odd r both}
are very similar,
so we provide the proof of Proposition~\ref{prop: 2-regular decomposition odd r both}
and leave the details of the other two to the reader.
\begin{proof}[Proof of Proposition \ref{prop: 2-regular decomposition odd r both}]
The cyclic and non-cyclic cases are similar so we only show the cyclic case.
Let $\ell$ be the ordering corresponding to $L_{\ell_0}(R_{0}) \vee\cdots\vee L_{\ell_{t-1}}(R_{t-1})$.
Consider a set $E$ of $\lfloor \frac{rn+1}{2} \rfloor-\epsilon$ consecutive edges of $\ell$.
The edges of $E$, in order, will always be of the form
\[
\underbrace{e_1 ,\ldots, e_{j}}_\text{edges in $R_{i}$},
L_{\ell_{i+1}}\left(R_{i+1}\right) \vee\cdots\vee L_{\ell_{i+u+1-a}}\left(R_{i+u+1-a}\right),
\underbrace{e_{j+1} ,\ldots, e_{an-\lfloor\frac{n}{2}\rfloor-\epsilon}}_\text{edges in $R_{i+u+2-a}$} \,,
\]
for some $i \in[t]$, $j \in [n+1]$, and $a$.
Without loss of generality,
we can assume that
$E \cap E(R_{i})$ and
$E \cap E(R_{i+u+2-a})$ are non-empty and so $0 < j < an-\lfloor\frac{n}{2}\rfloor-\epsilon$.
There are $0 < an - \lfloor\frac{n}{2}\rfloor - \epsilon \leq 2n$ edges in
$E \cap \left(E(R_{i}) \cup E(R_{i+u+2-a}) \right)$.
Therefore, $a = 1$ or $a=2$.

If $a=1$, then the first $j$ edges and last $\lceil \frac{n}{2}\rceil-j-\epsilon$ edges of $E$
are the last $j$ edges of $\ell_{i}$ and the first $\lceil \frac{n}{2}\rceil-j-\epsilon$ edges of $\ell_{i+u+1}$,
respectively.
Thus,
the $j + \lceil \frac{n}{2}\rceil-j - \epsilon = \lceil \frac{n}{2} \rceil - \epsilon$
edges of $E \cap (E(R_i) \cup E(R_{i+u+1}))$ are consecutive in
$\ell_{i} \vee_{\lceil \frac{n}{2} \rceil - \epsilon} \ell_{i+u+1}$ and, by assumption,
form a matching.
The remaining edges of $E$ are from the $u = \frac{r-1}{2}$ $(\leq 2)$-regular graphs
$R_{i+1} , \ldots ,R_{i+u}$
and,
hence, the edges of $E$ form a $(\leq r)$-regular graph.

If $a=2$, then the first $j$ edges and last $\lceil \frac{3n}{2} \rceil -j-\epsilon$
edges of $E$ are the last $j$ edges of $\ell_{i}$
and the first $\lceil \frac{n}{2} \rceil-j-\epsilon$
edges of $\ell_{i+u}$, respectively.
Therefore, the $j + \lceil \frac{3n}{2} \rceil -j - \epsilon = \lceil \frac{3n}{2} \rceil - \epsilon$
edges of $E \cap (E(R_i) \cup E(R_{i+u}))$ are consecutive in
$\ell_{i} \vee_{\lceil \frac{3n}{2}\rceil - \epsilon} \ell_{i+u}$ and, by assumption, form a $(\leq 3)$-regular graph.
The remaining edges of $E$ are from the $u-1 = \frac{r-3}{2}$ $(\leq 2)$-regular graphs
$R_{i+1} , \ldots ,R_{i+u-1}$, and,
thus, the edges of $E$ form a $(\leq r)$-regular graph.
\end{proof}

\begin{remark}{\rm
Proposition \ref{prop: 2-regular decomposition odd r both}
holds for $r=1$ by replacing the assumption
$ms_3\left(\ell_{i} ,\ell_{i+u}\right) = ms_3\left(\ell_{i} ,\ell_{i}\right) \geq \lceil \frac{3n}{2}\rceil-\epsilon$ with $ms(\ell_i) \geq \lceil\frac{n}{2} \rceil - \epsilon $.}
\end{remark}
Note that Proposition \ref{prop: 2-regular decomposition odd r both}
requires two conditions on the orderings $\ell_0 , \ldots , \ell_{t-1}$, namely \\
$ms\left(\ell_{i}, \ell_{i+u+1}\right) \geq   \lceil \frac{n}{2} \rceil-\epsilon$
and
$ms_3\left( \ell_{i} ,\ell_{i+u}\right) \geq
\lceil \frac{3n}{2}\rceil-\epsilon$
(for the relevant $i$), whereas
Proposition~\ref{prop: Matching decomposition both}
and Proposition~\ref{prop: 2-regular decomposition both}
require only one condition.
This makes Proposition~\ref{prop: 2-regular decomposition odd r both}
harder to use and largely explains why Theorems
\ref{thm: Matching sequity for r}--\ref{thm: Gen cyclic matching sequence odd n}
cover more cases when $r$ and $n$ are not both odd.
Also, the requirement of two conditions in
Propositions \ref{prop: 2-regular decomposition odd r both}
may suggest that the case when $r$ and $n$ are odd is inherently
more difficult for $K_n$ (or even any graph of odd order).

Here and throughout the paper, we will allow orderings to be defined on sets of integers;
that is, a bijection
$\alpha \;:\; A \rightarrow [|A|]$
on a set $E$ of integers will also be considered an ordering.
However, we will only use such orderings
for re-indexing.
The following auxiliary lemma guarantees the existence of an ordering
of integers with particular useful properties.
It will find repeated use in later sections, so it is given here for easy reference.
\begin{lemma}\label{lem: General cyclic ordering}
Let $t$ and $u$ be integers with $t > u$ and set $d := \gcd(u,t)$.
Define $a_{i,j} := \left( i \pmod{\frac{t}{d}} \right)+j\frac{t}{d} \pmod{t}$
for all integers $i$ and $j$.
Then there exists an ordering $\alpha$ of $[t]$
with the property that
$\alpha(a_{i+1,j}) = \alpha(a_{i,j})+u \pmod{t}$
for all $i \in \bigl[\frac{t}{d}\bigr]$ and $j\in [d]$.
\end{lemma}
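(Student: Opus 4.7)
The plan is to construct $\alpha$ explicitly by exploiting the cyclic subgroup structure of $\mathbb{Z}/t\mathbb{Z}$ generated by $u$. First I would observe that since $i$ appears only through its residue mod $t/d$, the pairs $(i,j) \in [t/d] \times [d]$ parametrise the elements $a_{i,j}$ bijectively with $[t]$: explicitly, $a_{i,j} = i + j\tfrac{t}{d}$ for $(i,j) \in [t/d] \times [d]$. So defining $\alpha$ on $[t]$ is equivalent to defining $\alpha(a_{i,j})$ for each such pair.

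For fixed $j$, the required recursion $\alpha(a_{i+1,j}) \equiv \alpha(a_{i,j}) + u \pmod{t}$ forces the sequence $\alpha(a_{0,j}), \alpha(a_{1,j}), \ldots, \alpha(a_{t/d - 1,j})$ to be an arithmetic progression in $\mathbb{Z}/t\mathbb{Z}$ with common difference $u$. Since $\gcd(u,t) = d$, the element $u$ has order $t/d$ in $\mathbb{Z}/t\mathbb{Z}$, and the cyclic subgroup $H := \langle u \rangle$ coincides with $d\mathbb{Z}/t\mathbb{Z}$; in particular $|H| = t/d$ and $\mathbb{Z}/t\mathbb{Z}$ splits into exactly $d$ cosets of $H$. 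The $j$-th progression $\{\alpha(a_{0,j}) + iu \pmod t : i \in [t/d]\}$ is precisely one such coset.

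Therefore it suffices to choose starting values $c_0, c_1, \ldots, c_{d-1}$ that hit distinct cosets of $H$ and then set $\alpha(a_{i,j}) := c_j + iu \pmod t$. A convenient choice is $c_j := j$: since $H = d\mathbb{Z}/t\mathbb{Z}$, two integers lie in the same coset of $H$ iff they agree mod $d$, and $0,1,\ldots,d-1$ pairwise disagree mod $d$. One then verifies that $\alpha$ is a bijection by showing that $j + iu \equiv j' + i'u \pmod t$ with $(i,j),(i',j') \in [t/d]\times[d]$ forces $j = j'$ (because $j - j' \in H$ implies $d \mid j - j'$, and $|j - j'| < d$) and then $i = i'$ (because $(i-i')u \equiv 0$ modulo $t$ forces $i \equiv i'$ mod $t/d$).

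Since everything reduces to elementary coset arithmetic in $\mathbb{Z}/t\mathbb{Z}$, there is no real obstacle; the only place requiring mild care is confirming that the construction is a well-defined bijection, which is precisely the coset-distinctness check above. The recursion property then holds by construction: $\alpha(a_{i+1,j}) = j + (i+1)u \equiv \alpha(a_{i,j}) + u \pmod t$.
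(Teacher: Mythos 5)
Your construction $\alpha(a_{i,j}) = j + iu \pmod{t}$ is exactly the ordering used in the paper's proof, and your bijectivity check (forcing $j=j'$ via divisibility by $d$, then $i=i'$ from the order of $u$ modulo $t$) is the same argument, merely phrased in coset language. The proposal is correct and essentially identical to the paper's proof.
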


\begin{proof}
We check that the function
$\alpha \,:\, [t] \rightarrow [t]$
defined by
$\alpha(a_{i,j})= iu + j \pmod t$
for $i \in \bigl[\frac{t}{d}\bigr]$ and $j \in [d]$ will suffice.
First, we will show that $\alpha$ is a bijection.
Suppose that $iu+ j \equiv i' u + j'  \pmod t$,
with $i,i' \in\bigl[\frac{t}{d}\bigr]$ and $j,j' \in [d]$.
Then
$(i - i')u \equiv j'-j \pmod t$.
As $d$ divides $t$ and $u$, any multiple of $u$ modulo $t$ is also a multiple of $d$.
Thus,  $j-j'$ is a multiple of $d$, while $0 \leq |j-j'| \leq d-1$. This
is only possible if $j=j'$ and so $(i - i')u \equiv 0 \pmod t$.
As
$0 \leq |i' -i| \leq \frac{t}{d}-1$
and $\text{lcm}(t,u)= \frac{tu}{d}$,
we must also have that $i=i'$.
Thus, $\alpha$ is injective and so bijective; $\alpha$ is thus an ordering of $[t]$.
For any
$i \in \bigl[\frac{t}{d}\bigr]$ and $j \in [d]$,
\[
\alpha(a_{i+1,j}) = (i+1)u+j \pmod{t} =  iu+j +u \pmod{t} = \alpha(a_{i,j}) +u \pmod{t}\,.
\]
Hence, $\alpha$ has the required properties.
\end{proof}
\noindent
The function $\alpha$ in the proof can be used to show a
non-cyclic version of the lemma:
\begin{lemma}\label{lem: General non cyclic ordering}
Let $t > u$.
Then there exists an ordering $\alpha$ of $[t]$
with the property that, if $\alpha(a) \leq t-u-1$, then
$\alpha(a+1) = \alpha(a)+u$.
\end{lemma}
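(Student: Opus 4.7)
The plan is to take the very same bijection $\alpha:[t]\to[t]$ constructed in the proof of Lemma~\ref{lem: General cyclic ordering}, namely $\alpha(a_{i,j}) = iu + j \pmod{t}$ for $i\in\bigl[\tfrac{t}{d}\bigr]$ and $j\in[d]$, and verify that it already satisfies the non-cyclic property. Since every element of $[t]$ has a unique representation $a_{i,j}$, there is nothing to redo about bijectivity. All that remains is to understand how consecutive integers $a,a+1\in[t]$ sit inside the $a_{i,j}$-enumeration.

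For fixed $j$ and $i\in\bigl[\tfrac{t}{d}\bigr]$, one has $a_{i,j}=i+j\tfrac{t}{d}$, so as $i$ runs through $\bigl[\tfrac{t}{d}\bigr]$ the elements $a_{i,j}$ are exactly the block of consecutive integers $j\tfrac{t}{d},j\tfrac{t}{d}+1,\dots,(j+1)\tfrac{t}{d}-1$. This splits the check into two cases. If $a=a_{i,j}$ with $i<\tfrac{t}{d}-1$, then $a+1=a_{i+1,j}$ lies in the same block, and the cyclic property from Lemma~\ref{lem: General cyclic ordering} gives $\alpha(a+1)\equiv \alpha(a)+u\pmod{t}$; the hypothesis $\alpha(a)\leq t-u-1$ forces $\alpha(a)+u\leq t-1$, so the reduction mod $t$ is trivial and $\alpha(a+1)=\alpha(a)+u$ exactly, as required.

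The only other possibility is the boundary case $a=(j+1)\tfrac{t}{d}-1=a_{t/d-1,j}$ with $a+1=a_{0,j+1}$, where consecutive integers cross a block. Here I would show that the hypothesis of the implication is never satisfied, so there is nothing to prove. Concretely, $\alpha(a)=(\tfrac{t}{d}-1)u+j$, hence $\alpha(a)+u=\tfrac{tu}{d}+j$. Since $d=\gcd(u,t)$ divides $u$, the quotient $u/d$ is a positive integer, so $\tfrac{tu}{d}\geq t$, giving $\alpha(a)+u\geq t$, i.e.\ $\alpha(a)\geq t-u>t-u-1$. Thus the assumption $\alpha(a)\leq t-u-1$ fails in this case and the implication holds vacuously.

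I do not expect a genuine obstacle: the construction from Lemma~\ref{lem: General cyclic ordering} has already done the hard work, and all that this lemma needs is the observation that the one place where the cyclic wrap-around of $\alpha$ would cause trouble (the jump between consecutive blocks $j$ and $j+1$) occurs precisely at values of $\alpha$ that are too large to trigger the non-cyclic requirement. The only bookkeeping to be careful about is the divisibility fact $d\mid u$, which ensures $tu/d\geq t$ and thereby makes the boundary case vacuous.
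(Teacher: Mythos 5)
Your approach is exactly the paper's: Lemma~\ref{lem: General non cyclic ordering} is justified there only by the remark that the map $\alpha(a_{i,j})=iu+j \pmod{t}$ from Lemma~\ref{lem: General cyclic ordering} already works, and your two cases (within a block, where the hypothesis $\alpha(a)\leq t-u-1$ makes the reduction modulo $t$ trivial; and at a block boundary, where you argue the hypothesis fails) are precisely the verification the paper leaves to the reader. One step is, however, stated incorrectly. At the boundary you write $\alpha(a_{t/d-1,j})=\bigl(\tfrac{t}{d}-1\bigr)u+j$, but $\alpha$ takes values in $[t]$ and this unreduced quantity generally exceeds $t-1$: in the paper's own example with $t=10$, $u=4$ one has $\alpha(4)=6$, not $16$. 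Consequently the inference from $\alpha(a)+u=\tfrac{tu}{d}+j\geq t$ to $\alpha(a)\geq t-u$ is not valid as written, because for the reduced value $\alpha(a)+u$ need not equal $\tfrac{tu}{d}+j$.

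The conclusion of that step is nonetheless true, and the fix is one line, using the full strength of $d\mid u$ rather than just $\tfrac{tu}{d}\geq t$: since $\tfrac{t}{d}u=t\cdot\tfrac{u}{d}\equiv 0\pmod{t}$, we get $\bigl(\tfrac{t}{d}-1\bigr)u\equiv -u\pmod{t}$, hence $\alpha(a_{t/d-1,j})=t-u+j$ (no further reduction is needed because $0\leq j\leq d-1<u<t$), and this is at least $t-u>t-u-1$, so the implication is indeed vacuous at every block boundary; the same computation also covers $a=t-1$, where $a+1\notin[t]$ in any case. With that correction your argument is complete and coincides with the paper's intended proof.
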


\begin{example}
{\rm Let $t = 10$ and $u = 4$;
then $d = \gcd(4,10) = 2$.
The following table summarises the function $\alpha$
that is produced by Lemma~\ref{lem: General cyclic ordering}.
\begin{center}
\begin{tabular}{|c|c|c|c|c|c|c|c|c|c|c|}
\hline
$x$          & $0$ & $1$ & $2$ & $3$ & $4$ & $5$ & $6$ & $7$ & $8$ & $9$ \\
\hline
$\alpha(x)$  & $0$ & $4$ & $8$ & $2$ & $6$ & $1$ & $5$ & $9$ & $3$ & $7$ \\
\hline
\end{tabular}
\end{center}}
\end{example}

\section{Decompositions of the complete graph \texorpdfstring{$K_n$}{Kn}}
\label{sec:decompositions-of-Kn}

To prove Theorems
\ref{thm: Matching sequity for r}--\ref{thm: Gen cyclic matching sequence odd n},
we will require
matching decompositions of $K_n$ when $n$ is even and
$2$-regular decompositions of $K_n$ when $n$ is odd,
so that we can apply the applicable
proposition from Section~\ref{Preliminaries}.
Here we present the required decompositions.

\subsection{Decompositions of \texorpdfstring{$K_n$}{Kn} for even \texorpdfstring{$n$}{n}}
\label{Decompositions of $K_n$ when $n$ is even}
Let $n = 2m$, $r \in [2m-1]-\{0\}$, $c \mid 2m-1$ and $d = \frac{2m-1}{c}$.
Let $V_{c,d} = \{v_\infty\} \cup \left\{ v_{i,j}\;:\;  i \in [c], j \in [d] \right\}$
be the vertex set of $K_{2m}$.
We set $v_{i,j} := v_{i',j'}$,
whenever $i' = i \pmod{c}$ and $j' = j \pmod{d}$.
The following sets (with the singleton excluded) are given in \cite{MR0389662}.
For an integer $x$ and odd integer~$y$,
let $P_{x,y} = \left\{ \left\{ x+l,x-l \right\}\;:\;  l\in  [\frac{y+1}{2}] \right\}$,
where the elements of the members of $P_{x,y}$ are taken modulo $y$.
We also note the following useful fact.

\begin{remark}\label{rem: the Ps are a partition}{\rm
Each family $P_{x,y}$ forms a partition of $[y]$ into pairs and a singleton,
and the set $\left\{ P_{x,y}\;:\; x \in [y]  \right\}$
partitions the pairs and singletons of $[y]$.}
\end{remark}

\noindent
For $i \in [c]$ and $j \in [d]$, let $M_{i,j}$
be the matching of $K_{2m}$ with edge set
\[
\bigl\{ \{ v_{\infty} ,v_{i,j} \} \bigr\} \cup
\bigl\{ \{ v_{a_1,b_1},v_{a_2,b_2} \} \;:\; \{ a_1,a_2 \} \in P_{i,c},
\{b_1,b_2 \} \in P_{j,d} \textrm{ and $a_1 \neq i$ or $b_1 \neq j$ } \bigr\} \,.
\]
These are indeed matchings, as the edge incident to $v_\infty$ in each $M_{i,j}$ is unique and
a vertex $v_{a_1,b_1}$, which is not adjacent to $v_\infty$ in $M_{i,j}$, is incident to
edges $\left\{ v_{a_1,b_1},v_{a_2,b_2} \right\}$ with $\{ a_1,a_2 \} \in P_{i,c}$
and $\{b_1,b_2 \} \in P_{j,d}$; also by the above remark,
the choice for such an $a_2$ and $b_2$ is unique.

\begin{example}
{\rm When $c=5$ and $d = 3$, $M_{0,0}$ is the following subgraph of $K_{2m}$.
\begin{center}
\begin{tikzpicture}[thick,scale=.4]
  \pgfmathsetlengthmacro\scfac{2.5cm}
\foreach \angle/\colora/\mycount in {90/blue/1} %,162/red/2,234/green/3,%312/Yellow/4,18/violet/5
\draw[line width = \scfac*0.015, color = \colora]{
%% Adjacent to the centre vertex
%[bend right]
(0,0) -- ($(90+72*\intcalcMod{\mycount}{5}-72:1*\scfac)$)
%% Adjacent vertices of same radii
[bend right]
(18+72*\intcalcMod{\mycount}{5}  -72: 1*\scfac) to
(18+72*\intcalcMod{\mycount}{5}  +72: 1*\scfac)
(18+72*\intcalcMod{\mycount}{5} -144: 1*\scfac) --
(18+72*\intcalcMod{\mycount}{5} +144: 1*\scfac)
%
%% Adjacent vertices of same angle
(18+72*\intcalcMod{\mycount}{5} : 2*\scfac)--
(18+72*\intcalcMod{\mycount}{5} : 3*\scfac)
[bend right]
(18+72*\intcalcMod{\mycount}{5}-72 : 2*\scfac) to
(18+72*\intcalcMod{\mycount}{5}+72 : 3*\scfac)
[bend left]
(18+72*\intcalcMod{\mycount}{5}+72 : 2*\scfac) to
(18+72*\intcalcMod{\mycount}{5}-72 : 3*\scfac)
[bend left]
(18+72*\intcalcMod{\mycount}{5}-144 : 2*\scfac) to
(18+72*\intcalcMod{\mycount}{5}+144 : 3*\scfac)
[bend right]
(18+72*\intcalcMod{\mycount}{5}+144 : 2*\scfac) to
(18+72*\intcalcMod{\mycount}{5}-144 : 3*\scfac)
};
\foreach \r in {0,1,2}
\foreach \angle in {1,2,-1,-2}
\draw[line width = \scfac*0.015, color = black!50]{
%(\angle : \r)
(-\angle*72+90 : \r*\scfac+\scfac)
node [anchor=south,yshift=-\scfac*0.3,color = black]{$v_{\angle , \r}$}
node[circle, draw, fill=black!10,inner sep=\scfac*0.03, minimum width=\scfac*0.08]{
}
};
\draw[line width = \scfac*0.015, color = black!50]{
%(\angle : \r)
(-0*72+90 : 0*\scfac+\scfac)
node [anchor=west,yshift=-\scfac*0.04, xshift=\scfac*0.04, color = black]{$v_{0,0}$}
node[circle, draw, fill=black!10,inner sep=\scfac*0.03, minimum width=\scfac*0.08]{
}
(-0*72+90 : 1*\scfac+\scfac)
node  [anchor=west,yshift=-\scfac*0.04, xshift=\scfac*0.04, color = black]{$v_{0,1}$}
node[circle, draw, fill=black!10,inner sep=\scfac*0.03, minimum width=\scfac*0.08]{
}
(-0*72+90 : 2*\scfac+\scfac)
node  [anchor=west,yshift=-\scfac*0.04, xshift=\scfac*0.04, color = black]{$v_{0,2}$}
node[circle, draw, fill=black!10,inner sep=\scfac*0.03, minimum width=\scfac*0.08]{
}
};
\draw[line width = \scfac*0.015, color = black!50]{
(0 : 0)
node [anchor=south,yshift=-\scfac*0.3,color = black]{$v_{\infty}$}
node [circle, draw, fill=black!10,inner sep=\scfac*0.03, minimum width=\scfac*0.08]{}
};
\end{tikzpicture}
\end{center}}
\end{example}

\begin{lemma}
The set $\{ M_{i,j} \;:\; i\in [c],\, j \in [d]\}$
is a matching decomposition of $K_{n}$.
\end{lemma}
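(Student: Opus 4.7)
The plan is to show directly that every edge of $K_{2m}$ lies in exactly one $M_{i,j}$, which together with the fact that each $M_{i,j}$ is already known to be a matching establishes the decomposition. Since $cd = 2m-1$ matchings on $2m$ vertices can contain at most $m(2m-1) = |E(K_{2m})|$ edges, this also forces each $M_{i,j}$ to be a perfect matching.

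First I would handle the edges incident to $v_{\infty}$. Each such edge has the form $\{v_\infty, v_{i,j}\}$ for a unique pair $(i,j) \in [c] \times [d]$, and by the definition of $M_{i,j}$ this edge lies in $M_{i,j}$ and no other $M_{i',j'}$ (since in each $M_{i',j'}$ the unique edge at $v_\infty$ is $\{v_\infty, v_{i',j'}\}$).

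Next I would treat an arbitrary edge $e = \{v_{a_1,b_1}, v_{a_2,b_2}\}$ with $(a_1,b_1) \neq (a_2,b_2)$. The key tool is Remark~\ref{rem: the Ps are a partition}: the families $\{P_{x,c} : x \in [c]\}$ and $\{P_{y,d} : y \in [d]\}$ partition the pairs-and-singletons of $[c]$ and $[d]$, respectively. Therefore there is a unique $i \in [c]$ with $\{a_1,a_2\} \in P_{i,c}$ and a unique $j \in [d]$ with $\{b_1,b_2\} \in P_{j,d}$; this $(i,j)$ is the only possible candidate for an $M_{i,j}$ containing $e$. It remains to verify that $e$ actually lies in $M_{i,j}$, i.e.\ that the side condition ``$a_1 \neq i$ or $b_1 \neq j$'' is satisfied. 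For this I would observe that the only way this condition can fail is if $\{a_1,a_2\} = \{i\}$ and $\{b_1,b_2\} = \{j\}$ are both the singletons of $P_{i,c}$ and $P_{j,d}$ (since the non-singleton members $\{x \pm l\}$ of $P_{x,y}$ with $l \geq 1$ do not contain $x$ modulo odd $y$), which would force $e = \{v_{i,j}, v_{i,j}\}$, contradicting that $e$ is an edge.

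Combining the two cases, every edge of $K_{2m}$ lies in exactly one $M_{i,j}$, so $\{M_{i,j} : i \in [c], j \in [d]\}$ is a decomposition of $E(K_{2m})$ into matchings as claimed. There is no genuine obstacle here; the main point is simply to exploit the partition property of the $P_{x,y}$'s, and to notice that the exclusion ``$a_1 \neq i$ or $b_1 \neq j$'' in the definition of $M_{i,j}$ is precisely what prevents a fictitious self-loop at $v_{i,j}$ rather than excluding any genuine edge.
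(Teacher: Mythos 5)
Your proof is correct and follows essentially the same route as the paper: edges at $v_\infty$ are handled directly, and every other edge is assigned to a unique $M_{i,j}$ via the partition property of Remark~\ref{rem: the Ps are a partition}. The only difference is that you explicitly verify that the exclusion ``$a_1 \neq i$ or $b_1 \neq j$'' rules out only the degenerate self-loop at $v_{i,j}$ and never a genuine edge, a point the paper leaves implicit; this is a welcome bit of extra care rather than a departure in method.
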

\begin{proof}
Clearly, $M_{i,j}$ is the unique matching containing the edge
$\{v_{\infty}, v_{i,j} \}$.
For $a_1,a_2 \in [c]$ and $b_1,b_2 \in [d]$, with $a_1 \neq a_2$ or $b_1 \neq b_2$,
the edge $\{v_{a_1,b_1},v_{a_2,b_2} \}$ is in the matching $M_{i,j}$,
for some $i \in [c]$ and $j \in [d]$ if
$\{a_1,a_2 \} \in P_{i,c}$ and $\{b_1,b_2 \} \in P_{j,d}$.
By Remark~\ref{rem: the Ps are a partition},
such $i$ and $j$, and therefore $M_{i,j}$ exist and are uniquely given.
Thus, the edge $\{v_{a_1,b_1},v_{a_2,b_2} \}$ occurs in
exactly one matching.
Hence, $\{ M_{i,j} \;:\; i \in [c], j\in [d] \}$
is a matching decomposition of~$K_{n}$.
\end{proof}

Note that this decomposition is the same for different values of $c$, just indexed differently.
Indeed,
the bijection $\tau_{c,d} : V_{c,d} \rightarrow V_{2m-1,1}$
defined by $\tau_{c,d}(v_{\infty}) = v_{\infty}$ and
$\tau_{c,d} (v_{a,b}) = v_{ad+b,0}$ for $a \in [c]$ and $b \in [d]$
is an isomorphism,
showing that the decomposition for a particular value of $c$
is isomorphic to the decomposition for $c = 2m-1$.
Note that the Walecki decomposition (see \cite{MR2394738}) decomposes
$K_{2m}$ into Hamiltonian cycles and a complete matching,
from which the matching decomposition for $c = 2m-1$,
given above, can be easily obtained.

\subsection{Decompositions of \texorpdfstring{$K_n$}{Kn} when \texorpdfstring{$n$}{n} is odd}
\label{Decompositions of $K_n$ when $n$ is odd}

Let $n = 2m+1$.
We will present two different decompositions for $K_{2m+1}$.
The first is the Walecki decomposition~\cite{MR2394738} mentioned above.
Let $V = \{ \infty \} \cup \mathbb{Z}_{2m}$
be the vertex set of~$K_{2m+1}$.
Let $H_0$ be the Hamiltonian cycle
$\infty \,, 0  \,, 1 \,,  -1 \,, 2 \,, -2 \,, \ldots \,, x \,, -x \,, \ldots \,, m-1 , -(m-1) , m$,
as depicted in Figure \ref{fig1}.
\begin{figure}
\begin{center}
\begin{tikzpicture}[scale = 0.7]
\pgfmathsetlengthmacro\n{360/10}
\pgfmathsetlengthmacro\scfac{2.5cm}
\pgfmathsetlengthmacro\r{\scfac*1.5}
\draw[line width = \scfac*0.015, dashed, color =blue]{(162:\r) --($0.5*(162:\r)+0.5*(-18:\r)$)};
\draw[line width = \scfac*0.015, dashed, color =blue]{(-54:\r) --($0.5*(198:\r)+0.5*(-54:\r)$)};
\draw[line width = \scfac*0.01 ,color = black]{
(189:\r) node[circle, draw, fill,inner sep=\scfac*0.015, minimum width=\scfac*0.01]{}
(198:\r) node[circle, draw, fill,inner sep=\scfac*0.015, minimum width=\scfac*0.01]{}
(207:\r) node[circle, draw, fill,inner sep=\scfac*0.015, minimum width=\scfac*0.01]{}
( -9:\r) node[circle, draw, fill,inner sep=\scfac*0.015, minimum width=\scfac*0.01]{}
(-18:\r) node[circle, draw, fill,inner sep=\scfac*0.015, minimum width=\scfac*0.01]{}
(-27:\r) node[circle, draw, fill,inner sep=\scfac*0.015, minimum width=\scfac*0.01]{}
};
\draw[line width = \scfac*0.015, color = blue]{
(-90 :\r  ) -- (90:\r/6)
( 90 :\r/6) -- (90:\r)
};
\draw[line width = \scfac*0.015, color = blue]
\foreach \x/\y/\z in {90/54/0,54/126/1,126/18/-1,18/162/2,-54/234/m-1,234/-90/-(m-1)}
{
 (\x :\r)-- (\y:\r)
};

\draw[line width = \scfac*0.015, color = black!50] \foreach \x/\y/\z in {90/54/0,54/126/1,126/18/-1,18/162/2}
{
 (\x :\r) node [anchor=north,yshift=\scfac*0.3,color = black]{$\z$}  node[circle, draw, fill=black!10,inner sep=\scfac*0.03, minimum width=\scfac*0.08]{}
};
\draw[line width = \scfac*0.015, color = black!50] \foreach \x/\y/\z in {-54/234/m-1,234/-90/-(m-1)}
{
 (\x :\r)node[circle, draw, fill=black!10,inner sep=\scfac*0.03, minimum width=\scfac*0.08]{}
};
\draw[line width = \scfac*0.015, color = black!50] \foreach \x/\y/\z in {-54/234/m-1,234/-90/-(m-1)}
{
 (\x :\r*1.05) node [anchor=south,yshift=-\scfac*0.31,color = black]{$\z$}
};
\draw[line width = \scfac*0.015, color = black!50]{
(162:\r) node [anchor=north,yshift=\scfac*0.31,color = black]{$-2$}
node[circle, draw, fill=black!10,inner sep=\scfac*0.03, minimum width=\scfac*0.08]{}
};
\draw[line width = \scfac*0.015, color = black!50]{
(-90 :\r)node [anchor=south,yshift=-\scfac*0.3,color = black]{$m$}
node[circle, draw, fill=black!10,inner sep=\scfac*0.03, minimum width=\scfac*0.08]{}
(90 :\r/6)node [anchor=west,xshift=\scfac*0.07,yshift=-\scfac*0.02,color = black]{$\infty$}
node[circle, draw, fill=black!10,inner sep=\scfac*0.03, minimum width=\scfac*0.08]{}
};
\end{tikzpicture}
\end{center}
\caption{The Hamiltonian cycle $H_0$}\label{fig1}
\end{figure}
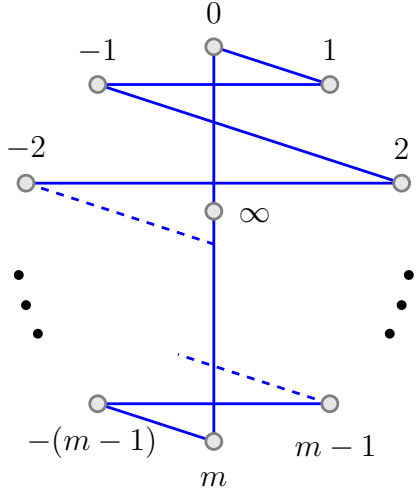

\noindent
Let $\sigma$ be the permutation $\sigma = (\infty)(0 \;\; 1 \;\;\cdots\;\;  2m-2 \;\; 2m-1)$.
Let $H_i = \sigma^{i}(H_0)$ for $i \in [m]$,
where $\sigma$ acts on the vertices of $V$.
Alspach~\cite{MR2394738} proved the following lemma,
and we give a similar proof for completeness.
\begin{lemma}
The set $\left\{ H_0,\ldots ,H_{m-1} \right\}$ is
a $2$-regular decomposition of $K_{2m+1}$.
\end{lemma}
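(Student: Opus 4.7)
The plan is as follows. Since $|E(K_{2m+1})| = m(2m+1) = m \cdot |E(H_0)|$, it suffices to show that each $H_i$ is $2$-regular and that $H_0, \ldots, H_{m-1}$ are pairwise edge-disjoint. The $2$-regularity is immediate: $H_0$ is a Hamiltonian cycle by construction, and $H_i = \sigma^i(H_0)$ is its image under the bijection $\sigma^i$ of $V$, hence also a Hamiltonian cycle.

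For edge-disjointness, I would classify the edges of $K_{2m+1}$ by \emph{type}. The $2m$ edges incident to $\infty$ form one family; each edge $\{a,b\} \subseteq \mathbb{Z}_{2m}$ carries a \emph{length} $\mathrm{len}(\{a,b\}) := \min(|a-b|,\, 2m-|a-b|) \in \{1,\ldots,m\}$. The permutation $\sigma$ rotates $\mathbb{Z}_{2m}$ and hence preserves length; its orbit on a length-$\ell$ edge has size $2m$ when $\ell < m$ and size $m$ when $\ell = m$ (the edge $\{a,a+m\}$ being fixed by $\sigma^m$).

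The heart of the argument is to describe the edges of $H_0$. Reading the signed differences along the path $0, 1, -1, 2, -2, \ldots, m-1, -(m-1), m$ produces the sequence $+1, -2, +3, -4, \ldots, +(2m-1)$, so its $k$-th edge $e_k$ has length $\min(k,\, 2m-k)$. Consequently $H_0$ contains two edges $e_\ell$ and $e_{2m-\ell}$ of each length $\ell \in \{1, \ldots, m-1\}$, a single edge of length $m$, and the two $\infty$-edges $\{\infty,0\}$ and $\{\infty,m\}$. The key structural fact is that $e_{2m-\ell} = \sigma^m(e_\ell)$ for every $\ell \in \{1, \ldots, m-1\}$, and similarly $\{\infty,m\} = \sigma^m(\{\infty,0\})$. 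This can be checked by a direct computation using the vertex formulas $v_{2j} = j$ and $v_{2j+1} = -j$, combined with the easy observation that the signed differences satisfy $d_k + d_{2m-k} \equiv 0 \pmod{2m}$.

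Given this identity, edge-disjointness reduces to a short piece of accounting. The $\infty$-edges of $H_i$ are $\{\infty,i\}$ and $\{\infty,i+m\}$, so as $i$ ranges over $[m]$ these exhaust all $2m$ edges incident to $\infty$, each appearing once. For $\ell < m$, the length-$\ell$ edges in $\bigcup_{i \in [m]} H_i$ are $\sigma^i(e_\ell)$ and $\sigma^{i+m}(e_\ell)$ for $i \in [m]$, which is precisely the full size-$2m$ orbit of $e_\ell$, each edge hit exactly once. For $\ell = m$, the $m$ cyclic shifts of the unique length-$m$ edge in $H_0$ cover its size-$m$ orbit. I expect the main obstacle to be establishing the antipodal-shift identity $e_{2m-\ell} = \sigma^m(e_\ell)$; once this is in hand, all that remains is orbit bookkeeping.
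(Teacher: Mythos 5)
Your proposal is correct and follows essentially the same route as the paper: you separate out the $\infty$-edges, classify the remaining edges by their length (which is $\sigma$-invariant), and verify distinctness length by length; your antipodal identity $e_{2m-\ell}=\sigma^m(e_\ell)$ plus orbit counting is just a cleaner packaging of the paper's explicit listing of the two length-$\ell$ edges $\{i+\frac{\ell}{2},i-\frac{\ell}{2}\}$ and $\{i+m-\frac{\ell}{2},i-m+\frac{\ell}{2}\}$ (and their odd-length analogues) in each $H_i$. One small slip: the vertex formulas should read $v_{2j-1}=j$ and $v_{2j}=-j$ (with $v_0=0$), not $v_{2j}=j$ and $v_{2j+1}=-j$, though this does not affect the validity of the identity you use.
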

\begin{proof}
As each $H_i$ has $2m+1$ edges, we only
need to show that the edges of $H_0 , \ldots , H_{m-1}$
are disjoint.
Clearly the edges $\{ \infty,i\}$ and $\{ \infty, i+m\}$
are only present in $H_i$, for all $i \in [m]$.
For the remaining edges,
let the {\em length} of an edge $\{i,j\}$ with $i,j \neq \infty$ be $j-i$ mod $2m$ or $i-j$ mod $2m$,
whichever lies in $[m+1]- \{0\}$.

We check, for every fixed length $l \in [m+1]-\{0\}$, that the edges of length $l$ in $H_0,\ldots, H_{m-1}$ are distinct.
Note that the edges of $H_i$ that are not incident to $\infty$ are
$\{ i+x,i-x\}$ for $x \in [m]-\{0\}$,
and $\{ i+x,i-x+1 \}$ for $x \in [m+1]-\{0\}$.
The edges of even length $l < m$ in $H_i$  are
$\left\{i+\frac{l}{2} , i-\frac{l}{2} \right\}$ and
$ \left\{i+m -\frac{l}{2} , i-m+\frac{l}{2} \right\}$,
and neither edge is an edge of $H_j$ for $j \neq i$.
The edges of odd length $l < m$ in $H_i$ are
$\left\{ i+\frac{l+1}{2} , i-\frac{l+1}{2}+1 \right\}$ and
$\left\{ i+m-\frac{l-1}{2} , i-m+\frac{l-1}{2}+1 \right\}$,
and neither edge is an edge of $H_j$ for $j \neq i$.
If $m$ is even, then the edge of length $m$
in $H_i$
is $\left\{ i+\frac{m}{2} , i-\frac{m}{2} \right\}$,
and is only an edge of $H_i$.
If $m$ is odd, then the edge of length $m$
in $H_i$ is
$ \left\{ i+\frac{m+1}{2} , i-\frac{m+1}{2}+1 \right\}$,
and is only an edge of $H_i$.
Therefore, the
edges of every length $l \in [m+1]-\{0\}$ in $H_0 , \ldots , H_{m-1}$ are distinct.
\end{proof}

The $2$-regular decomposition $\left\{ H_0,\ldots ,H_{m-1} \right\}$
of $K_{2m+1}$ has a particular disadvantage relevant to us.
If $\ell_0$ is an ordering of $H_0$, then the permutation $\sigma$
induces orderings $\ell_1 , \ldots , \ell_{m-1} , \ell_{m}$
of $H_1 , \ldots , H_{m-1}, H_{m} = H_0$, respectively.
However, $\ell_{m} \neq \ell_0$.
Therefore, the $2$-regular decomposition $\{H_0,\ldots ,H_{m-1}\}$
is not ideal for constructing cyclic orderings in this fashion.

The second decomposition overcomes this problem
but does not exist for all $n$.
Recall that $n = 2m+1$ and let $m$ be odd.
Let $V_{m,2} = \{v_{\infty} \} \cup \{v_{i,j}\;:\;
i \in [m], j \in [2] \}$
be the vertex set of $K_{2m+1}$.
For an integer $x$ and an odd integer $y$, let $P_{x,y}$
be as defined in Subsection~\ref{Decompositions of $K_n$ when $n$ is even}.
Let $R_{i}$ be the subgraph of $K_{2m+1}$ with edge set
\[
\bigl\{ \{ v_{\infty},v_{i,0} \},\{ v_{\infty},v_{i,1}\}, \{v_{i,0},v_{i,1}\} \bigr\} \cup
\bigl\{ \{ v_{a_1,b_1},v_{a_2,b_2} \} \;:\;
\{ a_1,a_2 \} \in P_{i,m} , a_1 \neq i,  b_1,b_2 \in [2]\bigr\} \,.
\]

\newpage
\begin{example}
{\rm The following graph depicts $R_0,R_1$ and $R_2$ for $K_7$.
\begin{center}
\begin{tikzpicture}[thick,scale=0.45]
  \pgfmathsetlengthmacro\scfac{2.5cm}
  \pgfmathsetmacro{\sepang}{120}
  \pgfmathtruncatemacro{\c}{3}
\foreach \colora/\mycount in {blue/1, red/2,green/3}
\draw[line width = \scfac*0.015, color = \colora]{
%% Adjacent to the centre vertex
%[bend right]
(0,0)--($(90+\sepang*\intcalcMod{\mycount-1}{\c}:1*\scfac)$)
[bend right]
(0,0)to ($(90+\sepang*\intcalcMod{\mycount-1}{\c}:2*\scfac)$)
%% Adjacent vertices of same radii
[bend right]
(90+\sepang*\intcalcMod{\mycount-1}{\c}+\sepang: 2*\scfac) to
(90+\sepang*\intcalcMod{\mycount-1}{\c}-\sepang: 2*\scfac)
[bend right]
(90+\sepang*\intcalcMod{\mycount-1}{\c}+\sepang: 1*\scfac) to
(90+\sepang*\intcalcMod{\mycount-1}{\c}-\sepang: 1*\scfac)
%
%% Adjacent vertices of same angle
(90+\sepang*\intcalcMod{\mycount-1}{\c} : 1*\scfac)--
(90+\sepang*\intcalcMod{\mycount-1}{\c} : 2*\scfac)
%
%%% Remaining edges
[bend right]
(90+\sepang*\intcalcMod{\mycount-1}{\c}+\sepang : 1*\scfac)to
(90+\sepang*\intcalcMod{\mycount-1}{\c}-\sepang : 2*\scfac)
[bend left]
(90+\sepang*\intcalcMod{\mycount-1}{\c}-\sepang : 1*\scfac)to
(90+\sepang*\intcalcMod{\mycount-1}{\c}+\sepang : 2*\scfac)
};
\pgfmathsetmacro{\sepangint}{90+\sepang}
\pgfmathsetmacro{\sepangfin}{450-\sepang}
\foreach \r in {1*\scfac,2*\scfac}
\foreach \angle in {90,\sepangint,...,\sepangfin}
\draw[line width = \scfac*0.015, color = black!50]{
(\angle: \r) node[circle, draw, fill=black!10,inner sep=\scfac*0.03, minimum width=\scfac*0.08]{}
};
\draw[line width = \scfac*0.015, color = black!50]{
(0 : 0) node[circle, draw, fill=black!10,inner sep=\scfac*0.03, minimum width=\scfac*0.08]{}
};

\end{tikzpicture}
\end{center}}
\end{example}
Clearly $R_0,R_1$ and $R_2$ form a $2$-regular decomposition of $K_7$.
Now we will show that the same is true in general.

\begin{lemma}
For odd $m$, %the set
$\left\{ R_0 , \ldots , R_{m-1} \right\}$ is a $2$-regular
graph decomposition of $K_{2m+1}$.
\end{lemma}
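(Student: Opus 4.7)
The plan is to establish two properties: that each $R_i$ is $2$-regular, and that the edge sets of $R_0, \ldots, R_{m-1}$ partition $E(K_{2m+1})$. The key structural observation is that for odd $m$, the family $P_{i,m}$ consists of the singleton $\{i\}$ together with $(m-1)/2$ unordered pairs $\{i+l, i-l\}$ (for $l \in \{1, \ldots, (m-1)/2\}$, indices mod $m$), and these pairs partition $[m] \setminus \{i\}$. In particular, every pair in $P_{i,m}$ avoids $i$, so the condition $a_1 \neq i$ in the pair-edge definition merely excludes the singleton case. Hence $R_i$ consists of the triangle on $\{v_\infty, v_{i,0}, v_{i,1}\}$ together with the four edges $\{v_{j,b_1}, v_{j', b_2}\}$ (for each $(b_1, b_2) \in [2]^2$) arising from each pair $\{j, j'\} \in P_{i,m}$.

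For $2$-regularity of $R_i$, I would check the degree of each vertex in turn. The vertex $v_\infty$ has degree $2$, being incident only to $v_{i,0}$ and $v_{i,1}$. The vertices $v_{i,0}$ and $v_{i,1}$ have degree $2$ via the triangle, and the exclusion $a_1 \neq i$ prevents them from appearing in any pair-edge of $R_i$. For $v_{j,b}$ with $j \neq i$, there is a unique $j' \neq j$ with $\{j, j'\} \in P_{i,m}$, so $v_{j,b}$ is incident precisely to $\{v_{j,b}, v_{j', 0}\}$ and $\{v_{j,b}, v_{j', 1}\}$, giving degree $2$.

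For the partition, I would examine each edge type separately. Edges of the form $\{v_\infty, v_{a,b}\}$ lie in $R_a$ and nowhere else, since the only $v_\infty$-edges appearing in $R_j$ go to $v_{j,0}$ and $v_{j,1}$. Edges of the form $\{v_{a,0}, v_{a,1}\}$ lie in $R_a$ as triangle edges, and cannot occur as pair-edges in any $R_j$ because of the $a_1 \neq i$ restriction. Edges $\{v_{a_1,b_1}, v_{a_2,b_2}\}$ with $a_1 \neq a_2$ belong to the unique $R_i$ for which $\{a_1, a_2\} \in P_{i,m}$, using Remark~\ref{rem: the Ps are a partition}. A final edge-count cross-check, $m \cdot (3 + 4 \cdot (m-1)/2) = m(2m+1) = \binom{2m+1}{2}$, confirms that no edge is omitted.

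There is no serious obstacle here; the only subtlety is to handle the singleton element of $P_{i,m}$ correctly, ensuring that the edge $\{v_{i,0}, v_{i,1}\}$ is assigned to $R_i$ exactly once (as a triangle edge) and that $v_{i,0}, v_{i,1}$ are not double-counted through the pair-edge rule. Once the role of the condition $a_1 \neq i$ is understood, the verification reduces to a routine case split.
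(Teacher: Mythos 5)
Your proposal is correct and follows essentially the same route as the paper: assign each edge type (the $v_\infty$-edges, the column edges $\{v_{a,0},v_{a,1}\}$, and the cross edges with $a_1\neq a_2$) to a unique $R_i$ using Remark~\ref{rem: the Ps are a partition}. The only difference is that you also verify the $2$-regularity of each $R_i$ vertex by vertex, which the paper treats as clear; this is a harmless (indeed slightly more complete) addition.
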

\begin{proof}
Clearly, the edges
$\left\{ v_{\infty},v_{i,0} \right\},\left\{ v_{\infty},v_{i,1} \right\}$
and $\left\{ v_{i,0},v_{i,1} \right\}$
are present only in $R_i$.
The edge $\{ v_{a_1,b_1},v_{a_2,b_2} \}$ with $a_1 \neq a_2$ is present in $R_i$
for $i$ such that $\{a_1,a_2 \} \in P_{i,m}$.
By
Remark \ref{rem: the Ps are a partition},
such an
$i$ exists and is unique. Thus, every edge is in a unique $R_i$
for $i \in [m]$.
\end{proof}

\section{Proof of Theorems \ref{thm: Matching sequity for r} and \ref{thm: Cyclic matching sequity for r} for when \texorpdfstring{$n$}{n} is even}
\label{sec: Gen matching sequence}
Write $n = 2m$ and let $r \in [n-1] -\{0\}$.
Set $d := \gcd(r,2m-1)$ and $c := \frac{2m-1}{d}$,
and as in Subsection~\ref{Decompositions of $K_n$ when $n$ is even},
define $V_{c,d}$ to be the vertex set of $K_{2m}$.
Also, let $M_{i,j}$ be the matchings
defined in Subsection~\ref{Decompositions of $K_n$ when $n$ is even} for $i \in [c]$ and $j \in [d]$.
Define $\ell_{i,j}$ to be the following ordering of $M_{i,j}$:
\begin{align*}
\ell_{i,j}(\{v_{\infty},v_{i,j} \}) &=0\,, \\
\ell_{i,j}( \{ v_{i+x,j}, v_{i-x,j} \}) &= x
\quad\text{for}\quad x \in \left[ \dfrac{c+1}{2} \right]
-\{ 0\}\,, \\
\begin{split}
\ell_{i,j}( \{ v_{i+2x,j+y}, v_{i-2x,j-y} \}) &=
(y-1)c+\frac{c+1}{2}+\Bigl(x+i\frac{c-1}{2} \hspace*{-3mm}\pmod{c}\Bigr)\\
&\;\text{ $ $ } \quad\quad\text{for}\quad x \in [c],\, y \in \left[ \dfrac{d+1}{2} \right]
-\{ 0\}\,.
\end{split}
\end{align*}

\begin{example}
{\rm When $c=5$ and $d=3$, the matching $M_{0,0}$ is labelled as in Figure \ref{fig2}.}

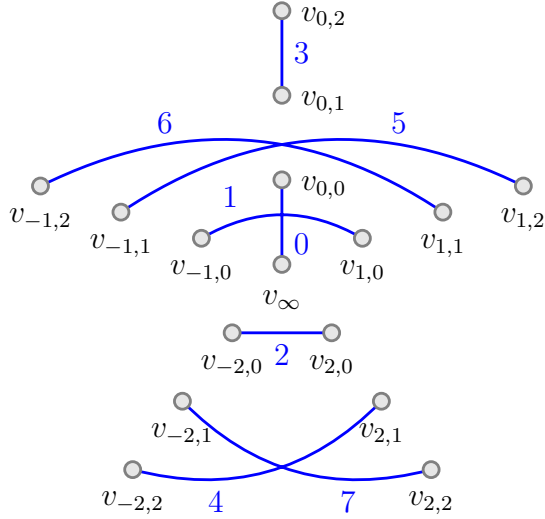
\begin{figure}\begin{center}
\begin{tikzpicture}[thick,scale=.45]
  \pgfmathsetlengthmacro\scfac{2.5cm}
\foreach \angle/\colora/\mycount in {90/blue/1} %,162/red/2,234/green/3,
\draw[line width = \scfac*0.015, color = \colora]{
%% Adjacent to the centre vertex
%[bend right]
(0,0) -- node[pos = 0.5, below right] {0}
($(90+72*\intcalcMod{\mycount}{5}-72:1*\scfac)$)
%% Adjacent vertices of same radii
[bend right] (18+72*\intcalcMod{\mycount}{5}-72: 1*\scfac) to node[pos = 0.7, above left] {  1}
(18+72*\intcalcMod{\mycount}{5} +72: 1*\scfac)
(18+72*\intcalcMod{\mycount}{5}-144: 1*\scfac) -- node[pos = 0.5, below ] {2}
(18+72*\intcalcMod{\mycount}{5} +144: 1*\scfac)
(18+72*\intcalcMod{\mycount}{5} : 2*\scfac)-- node[pos = 0.5, right ] {3}
(18+72*\intcalcMod{\mycount}{5} : 3*\scfac)
%%%%%
[bend right]
(18+72*\intcalcMod{\mycount}{5}-72 : 2*\scfac) to node[pos = 0.7, above ] {6}
(18+72*\intcalcMod{\mycount}{5}+72 : 3*\scfac)
[bend left]
(18+72*\intcalcMod{\mycount}{5}+72 : 2*\scfac) to node[pos = 0.7, above ] {5}
(18+72*\intcalcMod{\mycount}{5}-72 : 3*\scfac)
[bend left]
(18+72*\intcalcMod{\mycount}{5}-144 : 2*\scfac) to node[pos = 0.7, below ] {4}
(18+72*\intcalcMod{\mycount}{5}+144 : 3*\scfac)
[bend right]
(18+72*\intcalcMod{\mycount}{5}+144 : 2*\scfac) to node[pos = 0.7, below ] {7}
(18+72*\intcalcMod{\mycount}{5}-144 : 3*\scfac)
};
\foreach \r in {0,1,2}
\foreach \angle in {1,2,-1,-2}
\draw[line width = \scfac*0.015, color = black!50]{
(-\angle*72+90 : \r*\scfac+\scfac)
node [anchor=south,yshift=-\scfac*0.3,color = black]{$v_{\angle,\r}$}
node[circle, draw, fill=black!10,inner sep=\scfac*0.03, minimum width=\scfac*0.08]{}};
\draw[line width = \scfac*0.015, color = black!50]{
%(\angle : \r)
(-0*72+90 : 0*\scfac+\scfac)
node  [anchor=west,yshift=-\scfac*0.04, xshift=\scfac*0.04, color = black]{$v_{0,0}$}
node[circle, draw, fill=black!10,inner sep=\scfac*0.03, minimum width=\scfac*0.08]{
}
(-0*72+90 : 1*\scfac+\scfac)
node  [anchor=west,yshift=-\scfac*0.04, xshift=\scfac*0.04, color = black]{$v_{0,1}$}
node[circle, draw, fill=black!10,inner sep=\scfac*0.03, minimum width=\scfac*0.08]{
}
(-0*72+90 : 2*\scfac+\scfac)
node  [anchor=west,yshift=-\scfac*0.04, xshift=\scfac*0.04, color = black]{$v_{0,2}$}
node[circle, draw, fill=black!10,inner sep=\scfac*0.03, minimum width=\scfac*0.08]{}};
\draw[line width = \scfac*0.015, color = black!50]{
(0 : 0)
node [anchor=south,yshift=-\scfac*0.3,color = black]{$v_{\infty}$}
node[circle, draw, fill=black!10,inner sep=\scfac*0.03, minimum width=\scfac*0.08]{}
};
\end{tikzpicture}
\end{center}
\caption{The matching $M_{0,0}$}\label{fig2}
\end{figure}
\end{example}

\noindent
Note that the matchings $M_{i,0}$ are obtained by rotating the
above graph but the orderings $\ell_{i,0}$ are not.
We will use the following notation.
Let $V'_{j,0} = \{ v_{\infty}\} \cup \{v_{z,j}\;: \; z \in [c]\}$ for $j \in [d]$
and let $V'_{j,y} = \{v_{z,j \pm y}\;: \; z \in [c]\}$ for $j \in [d]$ and
$y \in \left[ \frac{d+1}{2} \right]-\{0\}$.
Clearly, $V'_{j,0} , \ldots , V'_{j,\frac{d-1}{2}}$
partition $V_{c,d}$ for all $j \in [d]$.
The crucial component of the proof of
Theorems~\ref{thm: Matching sequity for r} and~\ref{thm: Cyclic matching sequity for r}
for when $n$ is even
is the following lemma which allows us to apply
Proposition~\ref{prop: Matching decomposition both}.

\begin{lemma}\label{lem: Gen matching seq even n}
For all $i \in [c]$ and $j \in [d]$,
$ms(\ell_{i,j},\ell_{i+1,j}) \geq m-1$.
\end{lemma}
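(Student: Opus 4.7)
The plan is to reduce the statement to a per-vertex inequality on edge labels and then verify that inequality by a short case analysis based on where the vertex sits in $V_{c,d}$.

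First I would reformulate. Since each of $M_{i,j}$ and $M_{i+1,j}$ is itself a matching, the only way a window of size $m-1$ in $\ell_{i,j} \vee_{m-1} \ell_{i+1,j}$ can fail to be a matching is for some vertex $v$ to be incident to one chosen edge of $M_{i,j}$ and one chosen edge of $M_{i+1,j}$. For each vertex $v$ let $L_1(v)$ denote the $\ell_{i,j}$-label of the edge of $M_{i,j}$ containing $v$, and $L_2(v)$ the $\ell_{i+1,j}$-label of the edge of $M_{i+1,j}$ containing $v$. The window beginning at position $k \in \{0, \ldots, m-3\}$ consists of the edges with labels $\{k+2, \ldots, m-1\}$ in $\ell_{i,j}$ together with those with labels $\{0, \ldots, k\}$ in $\ell_{i+1,j}$, so a conflict at $v$ in this window occurs iff $L_2(v) \leq k \leq L_1(v) - 2$. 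Hence it suffices to show $L_1(v) \leq L_2(v) + 1$ for every vertex $v$.

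Next I would verify this inequality vertex-by-vertex. For $v \in \{v_\infty, v_{i,j}, v_{i+1,j}\}$ direct inspection gives $(L_1(v), L_2(v)) \in \{(0,0), (0,1), (1,0)\}$. For $v = v_{a,j}$ with $a \notin \{i, i+1\}$, both containing edges are of Type A; setting $t \equiv a - i \pmod c$ with $t \in \{2, \ldots, c-1\}$ gives $L_1(v) = \min(t, c-t)$ and $L_2(v) = \min(t-1, c-t+1)$, and the three subcases $t \leq (c-1)/2$, $t = (c+1)/2$, $t \geq (c+3)/2$ each yield $|L_1(v) - L_2(v)| \leq 1$.

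The crucial case is $v = v_{a,b}$ with $b \neq j$. Writing $v = v_{i+t, j+s}$ with $t \in [c]$ and $s \in \{\pm 1, \ldots, \pm(d-1)/2\}$, and using that the edge containing $v$ in $M_{i,j}$ has parameter $x \equiv t \cdot (c+1)/2 \pmod c$ if $s > 0$ and $x \equiv -t \cdot (c+1)/2 \pmod c$ if $s < 0$, both $L_1(v)$ and $L_2(v)$ lie in the same $y$-block with $y = |s|$ and differ only in the residue $r = (x + i(c-1)/2) \bmod c$ that enters the labelling formula. A short computation of how $r$ transforms under the simultaneous replacement $i \mapsto i+1$ and $t \mapsto t-1$ shows that the residue decreases by $1$ modulo $c$ when $s > 0$ and is unchanged when $s < 0$. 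In either case $L_1(v) - L_2(v) \in \{-(c-1), 0, 1\}$, which gives the required inequality.

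The main obstacle I anticipate is the modular wraparound in the residue $r$: the naive identity $L_2(v) = L_1(v) - 1$ for $s > 0$ breaks when $r = 0$ in $\ell_{i,j}$, since then the residue in $\ell_{i+1,j}$ becomes $c-1$ and $L_2(v) = L_1(v) + c - 1$. This actually favours the inequality rather than violating it, but it requires careful bookkeeping to ensure no corner case has been missed, particularly for the transitional vertices $v_{i,j\pm y}$ and $v_{i+1,j\pm y}$ where the containing edge jumps between Type B and Type C under the shift.
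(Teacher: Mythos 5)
Your proposal is correct and follows essentially the same route as the paper's proof: reduce the claim to the per-vertex inequality that the $\ell_{i,j}$-label of the edge at $v$ exceeds its $\ell_{i+1,j}$-label by at most $1$, then verify this by cases according to whether $v$ is $v_\infty$, lies in row $j$, or lies in another row, using the same modular computation (your residue shift by $-1 \bmod c$ for $s>0$, giving difference $1$ or $-(c-1)$, is exactly the paper's calculation, and your $s<0$ case matches the case the paper omits as ``similar''). The only differences are cosmetic: you parametrise vertices by $t=a-i$ and track the residue directly, and your worry about a Type jump at the transitional vertices is unfounded since the sign of $s$ is unchanged when passing from $M_{i,j}$ to $M_{i+1,j}$.
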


\begin{proof}
Consider a set of $m-1$ consecutive edges $E$ in
$\ell = \ell_{i,j} \vee_{m-1} \ell_{i+1,j}$.
As $M_{i,j}$ and $M_{i+1,j}$ are matchings,
two edges incident to a common vertex in $E$ cannot both be from
$M_{i+1,j}$ or both be from $M_{i,j}$.
The edges of $E$ are the edges labelled $m-l , \ldots , m-1$
by $\ell_{i,j}$ and the edges labelled $0  , \ldots , m-l-2$
by $\ell_{i+1,j}$,
for some $l \in [m-1] -\{0\}$.
Thus, for a vertex $v$ to be incident to
two edges in $E$,
$v$ must be incident to
$e_1$ in $M_{i,j}$ and $e_2$ in $M_{i+1,j}$
which satisfy  $\ell_{i,j}(e_1) \geq m-l$
and $\ell_{i+1,j}(e_2) \leq m-l-2$.
Therefore, $\ell_{i,j}(e_1)- \ell_{i+1,j}(e_2) \geq m-l-(m-l-2) = 2$.
Hence, to check that $E$ forms a matching,
it suffices to show,
for all vertices $v \in V_{c,d}$, that
if $v$ is incident to $e_1$ in $M_{i,j}$ and $e_2$ in $M_{i+1,j}$,
then $\ell_{i,j}(e_1)- \ell_{i+1,j}(e_2) < 2$.
Let $v \in V'_{j,y}$ for some $ y \in [ \frac{d+1}{2} ]$.

First, suppose that $y=0$.
If $e_1$ in $M_{i,j}$ and $e_2$ in $M_{i+1,j}$
are both incident to $\infty$,
then $\ell_{i,j}(e_1)-\ell_{i+1,j}(e_2) = 0-0 = 0 <  2$.
We therefore only need to check the remaining vertices in $V'_{j,0}$.
Let $v = v_{i + x,j}$ for some $x \in [ \frac{c+1}{2} ]$.
Then $v$ is incident to the edge labelled $x$ by $\ell_{i,j}$.
Let $x' \in [ \frac{c+1}{2} ]$ be the integer such that
$v$ is either $v_{i+1 + x',j}$ or $v_{i+1 - x',j}$.
In either case, $v$ is incident to the edge labelled $x'$ by $\ell_{i+1,j}$.
Therefore, it
suffices to show that $x-x' < 2$.
If $v = v_{i+1 + x',j}$, then $i+ x \equiv i+1+ x' \pmod{c}$
and so $x \equiv x' +1 \pmod{c}$.
As $x,x' \in [ \frac{c+1}{2} ] $,
it follows that $x = x' + 1$,
and so $x-x' = 1 < 2$.
Otherwise, $v = v_{i+1 - x',j}$,
implying that $i+ x \equiv i+1- x' \pmod{c}$,
and so $x +x' \equiv 1 \pmod{c}$.
As $ x ,x'\in [\frac{c+1}{2}]$,
it follows that $\{x,x' \} = \{ 0,1\}$.
Therefore, $x-x'$ is $1$ or $-1$ and in particular less than $2$.
The case in which $v = v_{i-x,j}$ for some $ x \in [\frac{c+1}{2}]$
can be treated in a similar fashion and is left to the reader.

Now suppose that $y \neq 0$.
Let
$v =  v_{i + 2x , j + y}$
for some $x \in [c]$.
Let $e_1$ and $e_2$ be the edges incident to $v$ in $M_{i,j}$ and $M_{i+1,j}$, respectively.
Let $v = v_{i+1 + 2x' , j+ y}$ for some $x'\in [c]$.
Then $i + 2x \equiv i+1 + 2x' \pmod{c}$.
As $\gcd(2,c) = 1$, this reduces to
\begin{equation}\label{eqn:cond on vertices 1}
 x' \equiv x + \frac{c -1 }{2}   \pmod{c}\,.
\end{equation}

We will now check that $\ell_{i,j}(e_1) - \ell_{i+1,j}(e_2) < 2$.
The labels of $e_1$ and $e_2$ are, respectively,
\[
  (y-1)c+\frac{c+1}{2}+\Bigl( x + i   \frac{c-1}{2} \!\pmod{c}\Bigr) \;\;\text{and}\;\;
  (y-1)c+\frac{c+1}{2}+\Bigl( x'+(i+1)\frac{c-1}{2} \!\pmod{c}\Bigr)\,.
\]
By (\ref{eqn:cond on vertices 1}),
the difference between these two labels is
\[
\Bigl( x+i\frac{c-1}{2} \pmod{c}\Bigr) -
\Bigl( x+i\frac{c-1}{2} +(1 + 1)\frac{c-1}{2}\pmod{c}\Bigr)\,.
\]
The right term is $c-1$ more than the left term before taking modulo $c$.
So the difference is either $1$ or $-(c-1)$ and in particular less than $2$.
Hence, $\ell_{i,j}(e_1) - \ell_{i+1,j}(e_2) < 2$.
The case in which $v = v_{i - 2x,j-y}$ is similar and therefore omitted.
Thus, $E$ forms a matching.
\end{proof}

\begin{proof}
[Proof of Theorems \ref{thm: Matching sequity for r} and
\ref{thm: Cyclic matching sequity for r} when $n$ is even]
Theorem 4 when $n$ is even follows from Theorem 5 when $n$ is even, so we only
prove the latter.
Let $\alpha$ and $a_{i,j}$ be as defined in Lemma \ref{lem: General cyclic ordering} for $u = r$ and $t = 2m-1$.
Let $M'_{\alpha(a_{i,j})} = M_{i,j}$ and $\ell_{\alpha(a_{i,j})} = \ell_{i,j}$ for  all $i \in [c]$ and $j \in [d]$.
If $x = \alpha(a_{i,j})$,
then, by Lemmas~\ref{lem: General cyclic ordering} and~\ref{lem: Gen matching seq even n},
$ms(\ell_{x},\ell_{x+r}) = ms(\ell_{i,j},\ell_{i+1,j}) \geq m-1$
(where $x+r$ in $\ell_{x+r}$ is taken modulo $2m-1$).
Thus,
Proposition \ref{prop: Matching decomposition both} yields $cms_r(K_{2m})\geq rm-1$,
using the matchings $M'_{0} ,\ldots ,M'_{2m-1}$ ordered by
$\ell'_{0} ,\ldots ,\ell'_{2m-1}$, respectively.
The reverse inequality, $cms_r(K_{2m})\leq rm-1$, follows from
Lemma~\ref{lem: Gen matching sequence bound}.
This completes the proof.
\end{proof}

\section{Proof of Theorem \ref{thm: Matching sequity for r} for when \texorpdfstring{$n$}{n} is odd and \texorpdfstring{$\gcd (r,n-1) = 1$}{gcd (r,n-1) = 1}}
Let $n = 2m+1$ and $r \in [2m]-\{0\}$ be an integer such that $\gcd(r,2m) = 1$.
Also, let $V_{2m}$ be the vertex set of $K_{2m+1}$, and
$H_i$ be the Hamiltonian cycles
from Subsection~\ref{Decompositions of $K_n$ when $n$ is odd}
for $i \in [m]$.
Let $\ell_i$ be the ordering of $H_i$ defined as follows:
\begin{align*}
\ell_{i}(\{ \infty, i    \}) &= 0\,, \\
\ell_{i}(\{ \infty, i+m  \}) &= m\,, \\
\ell_{i}(\{  i+rx , i-rx \}) &= x \quad\text{for non-zero}\; x \in [m]\,, \\
\ell_{i}\Bigl(\Bigl\{ i+rx+\frac{r+1}{2}, i-rx-\frac{r-1}{2} \Bigr\}\Bigr) &= m+x+1 \quad\textrm{for}\quad x \in [m]\,.
\end{align*}

This is indeed valid as
the edge $\{i+a,i-a \}$ has label $ar^{-1} \pmod{m}$ for $a \in [m] - \{0\}$
and the edge $\{ i+a, i-a+1 \}$
has the label $m+1+\left(r^{-1}\left( a -\frac{r+1}{2}\right) \pmod{m}\right)$ for $a \in [m]$.
Also,  it is clear that the first $m$ edges of $\ell_i$ form a matching
as do the last $m$ edges of $\ell_i$.

\begin{example}
{\rm
When $n = 11$ and $r= 3$, the Hamiltonian cycle $H_0$ is labelled as in Figure~\ref{fig3}.}
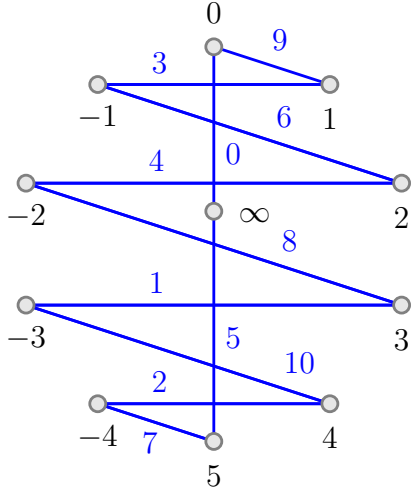
\begin{figure}
\begin{center}
\begin{tikzpicture}[scale = .7]
\pgfmathsetlengthmacro\n{360/10}
\pgfmathsetlengthmacro\scfac{2.5cm}
\pgfmathsetlengthmacro\r{\scfac*1.5}
\draw[line width = \scfac*0.015, color = blue]{
(-90 :\r)   -- node[pos = 0.45, right ] {5} ( 90:\r/6)
( 90 :\r/6) -- node[pos = 0.35, right ] {0} ( 90:\r)
( 90 :\r)   -- node[pos = 0.4, above right ] {9} (54:\r)
};
\draw[line width = \scfac*0.015, color = blue]
{
 (90+1*36:\r)-- node[pos = 0.35, above left  ] {3} (90-1*36:\r)
 (90+2*36:\r)-- node[pos = 0.4 , above left  ] {4} (90-2*36:\r)
 (90+3*36:\r)-- node[pos = 0.4 , above left  ] {1} (90-3*36:\r)
 (90+4*36:\r)-- node[pos = 0.35, above left  ] {2} (90-4*36:\r)
 (90+1*36:\r)-- node[pos = 0.55, above right ] {6} (90-1*36-36:\r)
 (90+2*36:\r)-- node[pos = 0.65, above right ] {8} (90-2*36-36:\r)
 (90+3*36:\r)-- node[pos = 0.81, above right ] {10} (90-3*36-36:\r)
 (90+4*36:\r)-- node[pos = 0.45, below       ] {7} (90-4*36-36:\r)
};

\draw[line width = \scfac*0.015, color = blue] \foreach \angle in {1,2,3,4}
{
 (90+\angle*36:\r)--
 (90-\angle*36:\r)
 (90+\angle*36:\r)--
 (90-\angle*36-36:\r)
};

\draw[line width = \scfac*0.015, color = black!50] \foreach \angle in {1,2,3,4}
{
 (90+\angle*36:\r) node [anchor=south,yshift=-\scfac*0.3, color = black]{$-\angle$} node[circle, draw, fill=black!10,inner sep=\scfac*0.03, minimum width=\scfac*0.08]{}
 (90-\angle*36:\r) node [anchor=south,yshift=-\scfac*0.3, color = black]{$ \angle$} node[circle, draw, fill=black!10,inner sep=\scfac*0.03, minimum width=\scfac*0.08]{}
};
\draw[line width = \scfac*0.015, color = black!50]{
(-90 :\r) node [anchor=south,yshift=-\scfac*0.3,color = black]{$5$}
node[circle, draw, fill=black!10,inner sep=\scfac*0.03, minimum width=\scfac*0.08]{}
(90 :\r/6) node [anchor=west,xshift=\scfac*0.07,yshift=-\scfac*0.02,color = black]{$\infty$}
node[circle, draw, fill=black!10,inner sep=\scfac*0.03, minimum width=\scfac*0.08]{}
(90 :\r) node [anchor=north,yshift=\scfac*0.3,color = black]{$0$}
node[circle, draw, fill=black!10,inner sep=\scfac*0.03, minimum width=\scfac*0.08]{}
};
\end{tikzpicture}
\end{center}
\caption{The Hamiltonian cycle $H_0$}\label{fig3}
\end{figure}
\end{example}

\begin{lemma}\label{lem: sequity gcd = 1 case}
Let $r$ be odd and set $u := \frac{r-1}{2}$.
Then
$ms_3(\ell_i,\ell_{i+u}) \geq 3m+1$ for all $i \in [m-u]$ and
$ms(\ell_i,\ell_{i+u+1}) \geq m$ for all $i \in [m-u-1]$.
\end{lemma}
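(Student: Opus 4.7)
The plan is to directly analyze, for each relevant window, which edges appear and which vertex degrees are possible. Since $H_i$ has only $2m+1$ edges, the concatenation $\ell_i \vee \ell_{i+u}$ is just the list of all $4m+2$ edges of $H_i \cup H_{i+u}$, and every window of $3m+1$ consecutive edges has the form
\[
E_k = (\text{last } 2m{+}1{-}k \text{ edges of } \ell_i) \cup (\text{first } m{+}k \text{ edges of } \ell_{i+u})\,,
\]
for some $k \in \{0, 1, \ldots, m+1\}$. Similarly, every window of $m$ consecutive edges in $\ell_i \vee \ell_{i+u+1}$ has the form $(\text{last } m{-}1{-}p \text{ of } \ell_i) \cup (\text{first } p{+}1 \text{ of } \ell_{i+u+1})$ for $p \in \{0, \ldots, m-2\}$. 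I rely on the structural fact that labels $0, \ldots, m$ of $\ell_i$ produce a matching on $\mathbb{Z}_{2m}$ together with the two spokes $\{\infty, i\}, \{\infty, i+m\}$, while labels $m+1, \ldots, 2m$ of $\ell_i$ form a perfect matching on $\mathbb{Z}_{2m}$.

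For the $ms_3$ claim, since $E_k \subseteq H_i \cup H_{i+u}$ and each $H_\cdot$ is $2$-regular, every vertex has degree at most $4$ in $E_k$, so it suffices to rule out degree $4$; equivalently, no vertex should have degree $2$ in both halves of $E_k$. I would first observe that the vertices incident to the removed first $k$ edges of $\ell_i$ form the set $\{\infty\} \cup \{i + ry : y \in \{-(k-1), \ldots, k-1\}\}$, so every vertex outside this set has degree at most $1$ in the $\ell_i$-part. The decisive arithmetic, using $u = (r-1)/2$, is that the edge labelled $m+x+1$ in $\ell_{i+u}$ has endpoints $\{i + r(x+1),\, i - rx\}$; consequently, the vertices achieving degree $2$ in the $\ell_{i+u}$-part (apart from $\infty$ and possibly the missed vertex $(i+u)+m$) lie in $\{i + ry : y \in \{-(k-2), \ldots, k-1\}\}$, which is a subset of the removed-vertex set above. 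Hence no $v \in \mathbb{Z}_{2m}$ reaches degree $4$; the vertex $\infty$ is checked directly to have degree at most $3$ in every $E_k$; and the anomaly at $(i+u)+m$ is handled by the same identity, since whenever $(i+u)+m$'s partner in the second-half matching of $\ell_{i+u}$ is included, $(i+u)+m$ itself falls inside the removed-vertex set of $\ell_i$.

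For the $ms$ claim, both pieces of the window are sub-matchings: the last $m-1-p$ of $\ell_i$ lies in the perfect matching on $\mathbb{Z}_{2m}$ given by labels $m+1, \ldots, 2m$ of $\ell_i$, and the first $p+1$ of $\ell_{i+u+1}$ lies in the matching given by labels $0, \ldots, m-1$ of $\ell_{i+u+1}$. The union is a matching iff no vertex lies in both pieces. Exploiting $u+1 = (r+1)/2$, one computes that the vertex $i + rx + (r+1)/2$ equals $(i+u+1) + rx$, which has label $x$ in $\ell_{i+u+1}$, while the vertex $i - rx - (r-1)/2$ equals $(i+u+1) - r(x+1)$, which has label $x+1$ in $\ell_{i+u+1}$ for $x \leq m-2$ and becomes the vertex $(i+u+1)+m$ missed by $\ell_{i+u+1}$'s opening matching when $x = m-1$. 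A vertex in the $\ell_i$-piece has $x \geq p+1$; the corresponding label in $\ell_{i+u+1}$ is then $\geq p+1$ (equal to $x$ or $x+1$), so that vertex is not in the first $p+1$ edges of $\ell_{i+u+1}$. In the remaining case ($x = m-1$ with $v = i - r(m-1) - (r-1)/2$) the vertex does not appear in $\ell_{i+u+1}$'s piece at all.

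The main obstacle will be the meticulous bookkeeping in the $ms_3$ claim: one must track in parallel the degrees contributed by $\ell_i$ and $\ell_{i+u}$ for generic vertices, for $\infty$, and for the exceptional vertex $(i+u)+m$. The boundary cases $k \in \{0, 1, m, m+1\}$ reduce quickly (the window is essentially a full Hamiltonian cycle together with a partial matching), but the uniform treatment of intermediate $k$ rests on the clean identity $(i+u) + rx + (r+1)/2 = i + r(x+1)$, which — together with its shift-by-one twin underlying the $ms$ claim — drives the entire argument.
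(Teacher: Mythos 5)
Your proposal follows essentially the same route as the paper's proof: reduce the degree-$4$ (resp.\ degree-$2$ overlap) question to showing that the degree-$2$ vertices of the $\ell_{i+u}$-part (resp.\ the vertices of the $\ell_{i+u+1}$-part) lie among the vertices already touched by the dropped initial edges of $\ell_i$, via the identity that the edge labelled $m+x+1$ in $\ell_{i+u}$ is $\{i+r(x+1),\,i-rx\}$ — exactly the paper's computation of its sets $W_1,W_2$ (and $W_0,W_1$ for the matching claim). One sentence needs its orientation fixed: it is the vertices \emph{inside} the removed-vertex set $\{\infty\}\cup\{i+ry : |y|\le k-1\}$, not outside it, that have degree at most $1$ in the retained $\ell_i$-part; with that slip corrected, your containment argument is the intended and correct one.
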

\begin{proof}
First, we will show that if $i \in [m-u]$,
then $ms_3(\ell_i,\ell_{i+u}) \geq 3m+1$.
Let $\ell = \ell_i \vee_{3m+1} \ell_{i+u}$
and consider a set $E$ of $3m+1$ consecutive edges in $\ell$.
A vertex has degree greater than $3$ in $E$,
only if it has degree two in both
$E \cap E(H_i)$ and $E \cap E(H_j)$.
In particular, the result follows immediately if
there are $m$ or fewer edges in either $E \cap E(H_i)$ or
$E \cap E \left(H_{i+u} \right)$, as the first
$m$ edges of $\ell_a$ form a matching
as do the last $m$ edges of $\ell_a$ for all $a \in [m]$.
So, suppose that there are $2m-l$ edges in $E \cap E(H_i)$
and so $m+1+l$ edges in $E \cap E \left(H_{i+u} \right)$ for some $l \in [m]$.
Let $W_1$ be the vertices of degree at most $1$ in $E \cap E(H_i)$
and $W_2$ be the vertices of degree~$2$ in $E \cap E (H_{i+u})$.
To show that $E$ forms a $(\leq 3)$-regular graph,
it suffices to check that $W_2 \subseteq W_1$.

The first $m$ edges of $\ell_{i+u}$ form a matching
in which $i+u+m$ is the only isolated vertex.
Therefore, the vertices $W_2$ are those incident to edges with labels between
$m$ and $m+l$, excluding $i+u+m$.
In particular,
$W_2 = \{ \infty\} \cup \{ i+\frac{r-1}{2}+rx+\frac{r+1}{2}, i+\frac{r-1}{2}-rx-\frac{r-1}{2} \;:\; x \in [l] \}$,
where $[0] = \emptyset$.
So
$W_2 = \{ \infty\} \cup \{ i+r(x+1), i-rx \;:\; x \in [l] \}
     = \{ \infty\} \cup \left( \{ i+rx, i-rx \;:\; x \in [l+1] \} -\{i-rl\} \right)$,
by re-indexing.
Similarly, the vertices in $W_1$ are those
incident to an edge in $E(H_i)- E$;
these edges are labelled between $0$ and $l$ by $\ell_i$.
Thus,
$W_1 = \{ \infty  \} \cup \{ i+rx' , i-rx' \;:\; x' \in [l+1] \}$.
Comparing $W_1$ and $W_2$ shows that $W_2 \subseteq W_1$.

Finally, we will show that
if $i \in [m-u-1]$, then
$ms(\ell_i,\ell_{i+u+1}) \geq m$.
Let $\ell = \ell_{i} \vee_{m} \ell_{i+u+1}$
and consider a set $E$ of $m$ consecutive edges in $\ell$.
Suppose that there are $m-l$ edges in $E \cap E(H_i)$,
and so $l$ edges in $E \cap E (H_{i+u+1})$ for some non-zero $l \in [m]$.
The first $m$ edges of $\ell_a$ form a matching
as do the last $m$ edges of $\ell_a$ for $a \in [m]$.
Thus,
$E$ does not form a matching only if a vertex is incident to an edge in
$E \cap E \left( H_{i} \right)$ and an edge in $E \cap E(H_{i+u+1})$.
Let $W_0$ be the vertices incident to no edges in $E \cap E \left( H_{i} \right)$
and $W_1$ be the vertices incident to an edge in $E \cap E(H_{i+u+1})$.
To prove that $E$ forms a matching,
it suffices to show that $W_1 \subseteq W_0$.

The last $m$ edges of $\ell_i$ form a matching in which $\infty$
is the only vertex not incident to an edge.
Thus, the members of $W_0$ are the vertices incident to
an edge with a label in between $m+1$ and $m+l$, along with $\infty$.
Hence,
$W_0 = \{ \infty \} \cup \{i+rx+\frac{r+1}{2},i-rx-\frac{r-1}{2} \;:\; x \in [l]\}$.
The members of $W_1$ are the vertices incident
to one of the first $l$ edges of $\ell_{i+u+1}$, so
$W_1 =  \{ \infty, i+\frac{r+1}{2}\} \cup \{i+rx+\frac{r+1}{2},i-rx+\frac{r+1}{2} \;:\; x \in [l]-\{0\}\}$.
Therefore,
$W_1 = \{ \infty \} \cup \left(
\{i+rx+\frac{r+1}{2},i-rx-\frac{r-1}{2} \;:\; x \in [l] \}-\{i-r(l-1)-\frac{r-1}{2}\}\right)
\subseteq W_0$.
This completes the proof.
\end{proof}

\begin{proof}[Proof of Theorem \ref{thm: Matching sequity for r} when $n$ is odd and $\gcd (r,n-1) = 1$]
By Lemma \ref{lem: sequity gcd = 1 case} and
Proposition \ref{prop: 2-regular decomposition odd r both},
using $H_0  ,\ldots, H_{m-1}$ with labellings
$\ell_0  ,\ldots, \ell_{m-1}$, respectively,
we have that $ms_{r}(K_{n}) \geq \frac{rn-1}{2}$.
The reverse inequality follows from
Lemma \ref{lem: Gen matching sequence bound}.
\end{proof}
\section{Proof of Theorem~\ref{thm: Gen cyclic matching sequence odd n} and the remaining cases of Theorem~\ref{thm: Matching sequity for r} }
\label{sec:proof of theorem odd n}
Let $n= 2m+1$, $r \in [2m]-\{0\}$
and let $V_{2m}$ and $H_i$ be as defined in Section~\ref{Decompositions of $K_n$ when $n$ is odd}.
Also, let $\ell_i$ be the labelling of $H_i$ defined as follows:
\begin{align*}
\ell_{i}(\{ \infty, i     \}) &= 0\,, \\
\ell_{i}(\{ \infty, i+m   \}) &= m\,,  \\
\ell_{i}(\{   i+x , i-x   \}) &= x   \quad\quad\;\;\;\:\textrm{for } x \in [m]-\{0\}\,,\\
\ell_{i}(\{   i+x , i-x+1 \}) &= m+x \quad\textrm{for } x \in [m+1]-\{0\}\,.
\end{align*}

\begin{example}
{\rm
When $n = 11$, the Hamiltonian cycle $H_0$ is labelled as follows:
\begin{center}
\begin{tikzpicture}[scale =0.7]
\pgfmathsetlengthmacro\n{360/10}
\pgfmathsetlengthmacro\scfac{2.5cm}
\pgfmathsetlengthmacro\r{\scfac*1.5}
\draw[line width = \scfac*0.015, color = blue]{
(-90 :\r  ) -- node[pos = 0.45, right       ] {5} ( 90 :\r/6)
( 90 :\r/6) -- node[pos = 0.35, right       ] {0} ( 90 :\r)
( 90 :\r  ) -- node[pos = 0.4 , above right ] {6} ( 54 :\r)
};
\draw[line width = \scfac*0.015, color = blue]
{
 (90+1*36:\r)-- node[pos = 0.35, above left ] {1} (90-1*36:\r)
 (90+2*36:\r)-- node[pos = 0.4 , above left ] {2} (90-2*36:\r)
 (90+3*36:\r)-- node[pos = 0.4 , above left ] {3} (90-3*36:\r)
 (90+4*36:\r)-- node[pos = 0.35, above left ] {4} (90-4*36:\r)
 (90+1*36:\r)-- node[pos = 0.55, above right] {7} (90-1*36-36:\r)
 (90+2*36:\r)-- node[pos = 0.65, above right] {8} (90-2*36-36:\r)
 (90+3*36:\r)-- node[pos = 0.81, above right] {9} (90-3*36-36:\r)
 (90+4*36:\r)-- node[pos = 0.45, below      ] {10} (90-4*36-36:\r)
};

\draw[line width = \scfac*0.015, color = blue] \foreach \angle in {1,2,3,4}
{
 (90+\angle*36:\r)-- (90-\angle*36:\r)
 (90+\angle*36:\r)-- (90-\angle*36-36:\r)
};

\draw[line width = \scfac*0.015, color = black!50] \foreach \angle in {1,2,3,4}
{
 (90+\angle*36:\r) node [anchor=south,yshift=-\scfac*0.3, color = black]{$-\angle$}  node[circle, draw, fill=black!10,inner sep=\scfac*0.03, minimum width=\scfac*0.08]{}
 (90-\angle*36:\r) node [anchor=south,yshift=-\scfac*0.3, color = black]{$ \angle$}  node[circle, draw, fill=black!10,inner sep=\scfac*0.03, minimum width=\scfac*0.08]{}
};
\draw[line width = \scfac*0.015, color = black!50]{
(-90 :\r) node [anchor=south,yshift=-\scfac*0.3,color = black]{$5$}
node[circle, draw, fill=black!10,inner sep=\scfac*0.03, minimum width=\scfac*0.08]{}
(90 :\r/6) node [anchor=west,xshift=\scfac*0.07,yshift=-\scfac*0.02, color = black]{$\infty$}
node[circle, draw, fill=black!10,inner sep=\scfac*0.03, minimum width=\scfac*0.08]{}
(90 :\r) node [anchor=north,yshift=\scfac*0.3,color = black]{$0$}
node[circle, draw, fill=black!10,inner sep=\scfac*0.03, minimum width=\scfac*0.08]{}
};

\end{tikzpicture}
\end{center}}
\end{example}

The other $H_i$'s and their labellings are obtained by rotating the above graph.
Therefore,
$\ell_i$ is just the labelling that takes alternating edges of the Hamilton cycle $H_i$
starting from $\{\infty, i\}$.
Thus, it is clear that $ms(\ell_i) = m$ for $i \in [m]$.

\subsection{Proof of Theorem \ref{thm: Matching sequity for r} for when \texorpdfstring{$n$}{n} is odd and \texorpdfstring{$r$}{r} is even }
We will require the following lemma to apply Proposition \ref{prop: 2-regular decomposition both}.
\begin{lemma}\label{lem: Hamiltonian comparison}
For distinct $i,j \in [m]$, $ms_2(\ell_i,\ell_j) \geq 2m+1 -|j-i|$.
\end{lemma}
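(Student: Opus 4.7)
The plan is to verify directly that every window of $s := 2m + 1 - |j - i|$ consecutive edges in $\ell_i \vee_s \ell_j$ forms a $(\leq 2)$-regular subgraph. Without loss of generality assume $i < j$ and let $k := j - i \in \{1, \ldots, m - 1\}$; the case $i > j$ is handled by the analogous argument with the direction of the shift reversed. For each $t \in \{0, \ldots, s - 2\}$, the $t$-th window is $A_t \cup B_t$, where $A_t \subseteq H_i$ consists of the edges with $\ell_i$-label in $\{k + t + 1, \ldots, 2m\}$ and $B_t \subseteq H_j$ consists of the edges with $\ell_j$-label in $\{0, \ldots, t\}$. Writing $C_t := E(H_i) \setminus A_t$, the $2$-regularity of $H_i$ gives $\deg_{A_t}(v) = 2 - \deg_{C_t}(v)$, and, since $H_i$ and $H_j$ are edge-disjoint, the requirement that $A_t \cup B_t$ be $(\leq 2)$-regular is equivalent to
\[
\deg_{B_t}(v) \leq \deg_{C_t}(v)
\]
holding for every vertex $v$ and every $t \in \{0, \ldots, s - 2\}$.

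Next, let $q_1(v) \leq q_2(v)$ denote the two $\ell_i$-labels incident to $v$ and $p_1(v) \leq p_2(v)$ the two $\ell_j$-labels. Tracking the two jumps of $\deg_{B_t}(v)$ and $\deg_{C_t}(v)$ as $t$ grows, the inequality above is implied by the pointwise conditions $q_r(v) \leq p_r(v) + k$ for $r = 1, 2$. The key structural observation is that $H_j = \sigma^k(H_i)$ and the labellings are compatible with this shift: a direct check of the four defining cases of $\ell_i$ yields $\ell_i(e) = \ell_j(\sigma^k e)$ for every $e \in E(H_i)$. Consequently $p_r(v) = q_r(v - k)$, where $v - k$ is taken in $\mathbb{Z}_{2m}$ when $v \neq \infty$ and $\infty - k := \infty$, so the required bound becomes $q_r(v) - q_r(v - k) \leq k$ for every vertex $v$ and each $r \in \{1, 2\}$.

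The final step is to verify this inequality. It is immediate when $v = \infty$, so it remains to handle $v \in \mathbb{Z}_{2m}$. I would show that for each $r \in \{1, 2\}$ the function $q_r$ on $\mathbb{Z}_{2m}$ is $1$-Lipschitz with respect to the cyclic distance. A direct computation from the definition of $\ell_i$ shows that $q_1(i + b)$ equals the cyclic distance from $b$ to $0$ in $\mathbb{Z}_{2m}$, while $q_2(i + b)$ is a shifted tent function with plateaus at $b \in \{0, 1\}$ and $b \in \{m, m + 1\}$ whose consecutive differences all lie in $\{-1, 0, 1\}$. Since the cyclic distance from $v$ to $v - k$ is exactly $k$ (using $k \leq m - 1$), the Lipschitz bound gives $|q_r(v) - q_r(v - k)| \leq k$, completing the proof. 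The main obstacle is the Lipschitz verification for $q_2$: although elementary, one must carefully inspect the transitions across the two plateaus and across the wrap-around from $b = 2m - 1$ back to $b = 0$ to rule out any jump exceeding $1$ in absolute value.
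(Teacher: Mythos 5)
Your proof is correct, and it reaches the bound by a noticeably different route than the paper's. The paper checks each window by a case split: if both halves of the window have at most $m$ edges it invokes the fact that the first and last $m$ edges of each $\ell_a$ form matchings, and otherwise it writes out the set $W_2$ of degree-$2$ vertices in the larger piece and the set $W_1$ of covered vertices in the smaller piece explicitly as cyclic intervals in $\mathbb{Z}_{2m}$ and verifies the intervals are disjoint. You instead complement inside the $2$-regular cycle $H_i$, which converts $(\leq 2)$-regularity of \emph{every} window uniformly into the counting inequality $\deg_{B_t}(v)\leq\deg_{C_t}(v)$, and then into the pointwise bound $q_r(v)\leq p_r(v)+k$ on the sorted incident labels; the decisive inputs are the equivariance $\ell_j(\sigma^k e)=\ell_i(e)$ (giving $p_r(v)=q_r(v-k)$) and the $1$-Lipschitz property of $q_1$ (cyclic distance to $i$) and $q_2$ (a tent with plateaus at $\{i,i+1\}$ and $\{i+m,i+m+1\}$). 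I checked these label computations and the counting implication from the sorted pairs, and since $|j-i|\leq m-1$ the cyclic distance from $v$ to $v-k$ is exactly $k$, so the Lipschitz estimate closes the argument; moreover the absolute-value form of that estimate handles $i>j$ by the same token, as you indicate. What your approach buys is a case-free, symmetry-driven argument that makes the role of the rotation $\sigma$ explicit; what the paper's buys is the concrete interval descriptions of $W_1$ and $W_2$, a style of bookkeeping it reuses almost verbatim in Lemma~\ref{lem: Hamiltonian comparison 1 and 3} and Lemma~\ref{lem: 1 and 3 seq centre case}.
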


\begin{proof}
Let $s = 2m+1 -|j-i|$
and consider a set $E$ of $s$ consecutive edges of $\ell = \ell_i \vee_{s} \ell_j$.
As $ms(\ell_a) \geq m$ for all $a \in [m]$,
the last $m$ edges of $\ell_i$ form a matching
as do the first $m$ edges of~$\ell_j$.
Thus,
if both $E \cap E(H_i)$ and $E \cap E(H_j)$ contain fewer than $m+1$ edges,
then $E \cap E(H_i)$ and $E \cap E(H_j)$ both form matchings.
Hence, no vertex would have degree greater than $2$ in~$E$.
So,
consider a set of $s$ consecutive edges $E$ in $\ell$
where either $E \cap E(H_i)$ or $E \cap E(H_j)$
contains at least $m+1$ edges.
Let $a \in \{i,j\}$ be the integer for which $|E \cap E(H_a)| \geq m+1$.
Let $W_2$ be the vertices of degree $2$ in $E \cap E(H_{a})$
and $W_1$ be the vertices of degree $1$ in $E \cap E(H_{a'})$,
where $a' \in \{ i,j\}$ and  $a' \neq a$.
As $ms(\ell_{a'}) \geq m$,
$E \cap E(H_{a'})$ forms a matching.
Thus, there exists a vertex incident to more than two edges in $E$
only if $W_1 \cap W_2 \neq \emptyset$.
Hence, to show that $E$ forms a $(\leq 2)$-regular graph,
it suffices to prove that $W_1 \cap W_2 = \emptyset$.

If $a = i$, then $E \cap E(H_i)$ has $s-l$ edges
and $E \cap E(H_j)$ has $l$ edges for some non-zero $l \in [s-m]$.
The last $m$ edges of $\ell_i$ form a matching in which
$\infty$ is the only vertex not incident to an edge.
Therefore, the vertices in $W_2$ are the vertices incident to an edge
labelled $x$ by $\ell_i$ such that $l+|j-i| \leq x \leq m$,
excluding $\infty$.
In particular, $W_2$ contains $i+m$ and $i+x$ and $i-x$ for $l+|j-i| \leq x \leq m-1$.
The vertices in $W_1$ are the vertices incident
to any of the first $l$ edges of $\ell_j$;
hence,
$W_1$ contains $\infty , j$, and  $j+x'$ and $j-x'$
for $x' \in [l] -\{0\} $.
Then $W_2$ and $W_1$ can be expressed (modulo $2m$) as
\begin{align*}
W_2 &= \{i+l+|j-i| ,\ldots ,i+(m-1),i+m,i-(m-1), \ldots , i-l-|j-i|\}\\
\textrm{and}\quad
W_1 &= \{\infty, j-l+1 ,\ldots , j-1,j,j+1, \ldots , j+l-1\}\,.
\end{align*}
As $i-|j-i| < j+1$ and $i+|j-i| > j-1$, $W_1 \cap W_2 = \emptyset$.
The case in which $a = j$ is similar and we omit the details.
Thus, the $s$ consecutive edges of $E$ form a $(\leq 2)$-regular graph.
\end{proof}

\begin{proof}[Proof of Theorem \ref{thm: Matching sequity for r} when $n$ is odd and $r$ is even]
Let $\alpha$ satisfy the properties in Lemma \ref{lem: General non cyclic ordering}
with $u = \frac{r}{2}$ and $t = m$.
Let $H'_{\alpha(i)} = H_{i}$ and $\ell'_{\alpha(i)} = \ell_{i}$ for all $i \in [m]$.
If $x= \alpha(i) \in [m-u]$,
then $ms_2(\ell'_{x},\ell'_{x+u}) = ms_2(\ell_{i},\ell_{i+1}) \geq 2m$,
by Lemmas~\ref{lem: General non cyclic ordering} and~\ref{lem: Hamiltonian comparison}.
Thus,
applying Proposition \ref{prop: 2-regular decomposition both}
to $H'_0, \ldots ,H'_{m-1}$ with orderings $\ell'_0  ,\ldots, \ell'_{m-1}$,
respectively, yields the inequality $ms_r(K_{n}) \geq \frac{rn}{2}-1$.
The reverse inequality, $ms_r(K_{n}) \leq \frac{rn}{2}-1$, follows
from Lemma \ref{lem: Gen matching sequence bound}.
\end{proof}

\subsection{Proof of Theorem \ref{thm: Matching sequity for r} for when \texorpdfstring{$n$}{n} and \texorpdfstring{$r$}{r} are odd and \texorpdfstring{$r\geq \frac{n-1}{2}$}{r> n-1/2}}
We will require the following two lemmas.

\begin{lemma}\label{lem: Label odd r and n}
For a fixed $\frac{t}{2} \leq u \leq t-1$
there exists an ordering $\alpha_u$ of $[t]$ such that
\begin{itemize}
\item[(1)] $\alpha_u(i+u)   = \alpha_u (i) - 1$ \; for all $i\in [t-u]$;
\item[(2)] $\alpha_u(i+u+1) = \alpha_u (i) + 1$ \; for all $i\in [t-u-1]$.
\end{itemize}
\end{lemma}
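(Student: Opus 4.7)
The plan is to construct $\alpha_u$ explicitly: conditions (1) and (2) will turn out to rigidly determine $\alpha_u$ on two ``outer'' blocks of $[t]$ while leaving a ``middle'' block free, and the hypothesis $u \geq t/2$ ensures that the free block has exactly the right size to fill the codomain.

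First I would combine conditions (1) and (2). For $i$ with $0 \le i \le t-u-2$, both conditions apply to the same $i$, yielding
\[
  \alpha_u(i+u+1) - \alpha_u(i+u) = 1 - (-1) = 2.
\]
Hence $\alpha_u$ restricted to the block $[u,t-1]$ is an arithmetic progression with common difference $2$. Substituting this back into (1) forces $\alpha_u(i+1) = \alpha_u(i) + 2$ on $[0, t-u-1]$ as well. So, once $\alpha_u(0)$ is chosen, the values on both outer blocks $[0,t-u-1]$ and $[u,t-1]$ are determined by the closed forms $\alpha_u(i) = \alpha_u(0) + 2i$ and $\alpha_u(i+u) = \alpha_u(0) - 1 + 2i$.

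Next I would pick $\alpha_u(0) := 2u - t + 1$. With this choice the first outer block carries the values $2u-t+1, 2u-t+3, \ldots, t-1$ and, via condition (1), the second outer block carries $2u-t, 2u-t+2, \ldots, t-2$. Together they cover the $2(t-u)$ integers $\{2u-t, 2u-t+1, \ldots, t-1\}$, leaving the $2u-t$ values $\{0, 1, \ldots, 2u-t-1\}$ unused. The middle positions $[t-u, u-1]$ number exactly $2u-t$ (non-negative since $u \geq t/2$, and zero precisely when $u = t/2$), so I would complete $\alpha_u$ by defining it on the middle by any bijection onto the unused values, for instance $\alpha_u(t-u+k) := k$ for $k \in [2u-t]$.

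Finally I would verify that $\alpha_u$ is a bijection $[t] \to [t]$ and that (1) and (2) hold. Bijectivity is immediate from the three-piece description since the three codomain parts $\{2u-t+1, 2u-t+3, \ldots, t-1\}$, $\{2u-t, 2u-t+2, \ldots, t-2\}$ and $\{0, 1, \ldots, 2u-t-1\}$ partition $[t]$; properties (1) and (2) hold by direct substitution into the closed-form expressions on the outer blocks, and they do not impose anything on the middle since $i$ and $i+u$ (resp.\ $i$ and $i+u+1$) always lie in the outer blocks for the allowed range of $i$. The only substantive check is that the middle block is disjoint from the outer blocks, which is immediate from $u \geq t/2$; there is no serious obstacle, and the lemma is essentially a bookkeeping exercise once the AP structure on the outer blocks is identified.
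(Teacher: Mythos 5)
Your proposal is correct and takes essentially the same approach as the paper: an explicit three-block piecewise definition (arithmetic progressions of step $2$ on the outer blocks $[0,t-u-1]$ and $[u,t-1]$, with the leftover values placed arbitrarily on the middle block $[t-u,u-1]$), verified by direct substitution. In fact your ordering is just the paper's $\alpha_u$ shifted by the constant $2u-t$ modulo $t$, so the two constructions coincide up to relabelling of the values.
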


\begin{proof}

We show that the ordering $\alpha_u \;:\; [t] \rightarrow [t]$, defined delow, will suffice:
\[
\alpha_u(i) :=
\begin{cases}
  2i+1    &\textrm{ if  } 0   \leq i \leq t-u-1 \\
  i+(t-u) &\textrm{ if  } t-u \leq i \leq u-1 \\
  2(i-u)  &\textrm{ if  } u   \leq i \leq t-1\,.
\end{cases}
\]
It is easy to check that $\alpha_u$
is injective and thus bijective;
$\alpha_u$ is thus an ordering of $[t]$.
For~each integer $i \in [t-u]$,
$\alpha_u(i+u) = 2(i+u - u) = 2i = \alpha_u(i)-1$.
Similarly for $i \in [t-u-1]$,
$\alpha_u(i+u+1) = 2(i+u+1 - u) = 2i+2 = \alpha_u(i)+1$.
\end{proof}

\begin{lemma}\label{lem: Hamiltonian comparison 1 and 3}
If $i < j$, then
$ms(\ell_i, \ell_j) \geq  m+1 - (j-i)$ and $ms_3(\ell_i,\ell_j) \geq 3m+1 -(j-i)$.
If $i > j$,
$ms(\ell_i, \ell_j) \geq  m   - (i-j)$ and $ms_3(\ell_i,\ell_j) \geq 3m+2 -(i-j)$.
\end{lemma}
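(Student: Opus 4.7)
\emph{Proof plan.} For each of the four inequalities I set $s$ equal to the claimed lower bound, write $d := |i-j|$, and consider an arbitrary straddling window $E$ of $s$ consecutive edges of $\ell_i \vee \ell_j$. Decompose $E$ as the last $p$ edges of $\ell_i$ together with the first $s-p$ edges of $\ell_j$, where $\max(1, s-2m-1) \leq p \leq \min(s-1, 2m+1)$. The plan is to read off, from the explicit definition of $\ell_a$, which non-$\infty$ vertices are covered by each half of $E$ (for the matching statements) or are isolated in each complementary half $C_i := E(H_i) \setminus E$, $C_j := E(H_j) \setminus E$ (for the $(\leq 3)$-regular statements), and then to check that the resulting arcs on the cycle $\mathbb{Z}_{2m}$ fit together on the cycle without the forbidden intersection.

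For the two matching statements we have $s \leq m+1$, so both $p$ and $s-p$ are at most $m$ and each of $E \cap E(H_i)$, $E \cap E(H_j)$ is already a matching. The last $p$ edges of $\ell_i$ cover the arc $A_i = \{i+y : y \in [m+1-p,\, m+p]\}$ of length $2p$, centred between $i+m$ and $i+m+1$, while the first $s-p$ edges of $\ell_j$ cover $\{\infty\}$ together with an arc $A_j$ of length $2(s-p)-1$ centred at $j$. Substituting $j = i \pm d$ and the prescribed $s$, the endpoints show that $A_j$ sits immediately to the left of $A_i$ on the cycle (adjacent, no gap), while on the other side the gap between them has length $2d-1$ when $i<j$ and $2d+1$ when $i>j$; both are strictly positive, so $A_i \cap A_j = \emptyset$ and $E$ is a matching.

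For the two $(\leq 3)$-regular statements $s \geq 2m+2$. Since $H_i$ and $H_j$ are $2$-regular, every vertex has degree at most $2$ in $E \cap E(H_i)$ and at most $2$ in $E \cap E(H_j)$, so $v$ has degree exceeding $3$ in $E$ exactly when $v$ is isolated in both $C_i$ and $C_j$. Tabulating the two $\ell_i$-labels at each vertex --- $(0,m)$ at $\infty$, $(0,m+1)$ at $i$, $(y,m+y)$ at $i+y$ for $y \in [1,m]$, and $(y,m+y+1)$ at $i-y$ for $y \in [1,m-1]$ --- one finds that for $m+1 \leq p \leq 2m$ the isolated set of $C_i$ is exactly the arc $\{i+y : y \in [2m-p+1,\, p-1]\}$ of length $2p-2m-1$ centred at $i+m$, and the analogous computation gives the isolated set of $C_j$ as $\{\infty\}$ together with an arc centred near $j$. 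Substituting $j = i\pm d$ and the appropriate $s$, these two arcs share a common boundary vertex and are separated on the other side by a gap of $2d+1$ (for $i<j$) or $2d-1$ (for $i>j$), so they are disjoint. In the degenerate ranges $p \leq m$, $p \geq 2m-d+1$, and $p = 2m+1$ one of $C_i$, $C_j$ has no isolated vertices, or one of the two halves of $E$ reduces to a matching, giving $\Delta(E) \leq 3$ immediately.

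The principal obstacle is the modular arithmetic: one must verify, for every relevant $p$ and for each of $i<j$, $i>j$, that the two arcs on $\mathbb{Z}_{2m}$ really are disjoint after wrap-around rather than coinciding. The extra $+1$ in the bound $3m+2-(i-j)$ for $i>j$ (versus $3m+1-(j-i)$ for $i<j$) arises precisely from the one-position shift between the label pair $(y, m+y+1)$ at $i-y$ and $(y, m+y)$ at $i+y$; to limit case-work I would write out $i<j$ in full, identify the arc geometry once, and handle $i>j$ by the same endpoint computation with the shifted labels.
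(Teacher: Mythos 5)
Your plan is correct and follows essentially the same route as the paper's proof: the paper likewise splits a straddling window into the last edges of $\ell_i$ and first edges of $\ell_j$, reduces each inequality to the disjointness of the two sets of doubly covered (resp.\ covered) vertices, and verifies this by writing those sets as explicit intervals modulo $2m$ --- exactly your arc-and-gap computation, including the $\pm 1$ asymmetry between $i<j$ and $i>j$. One phrasing caveat: in the $(\leq 3)$-regular case the two arcs abut rather than ``share a common boundary vertex''; your gap counts $2d\pm 1$ together with the total-length check show they are in fact disjoint, which is what the argument needs.
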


\begin{proof}
Let $\ell = \ell_{i} \vee_s \ell_j$,
where $s$ is yet to be specified.
First, let
$s = 3m+2 -|j-i| - \epsilon$
where $\epsilon = 0$ if $i > j$ and $\epsilon = 1$
otherwise.
We want to show that $ms_3(\ell) \geq s$.
Consider a set of $s$ consecutive edges $E$
of $\ell$.
For each $a \in [m]$, the first $m$ edges of $\ell_a$ form a matching,
as do the last $m$ edges of $\ell_a$, since $ms(\ell_a) \geq m$.
Thus, if either $E \cap E(H_i)$
or $E \cap E(H_j)$ contain fewer than $m+1$ edges,
then the degree of a vertex cannot be more than $3$ in $E$.
So, it suffices to assume that
$|E \cap E(H_i) | \geq m+1$ and $|E \cap E(H_j) | \geq m+1$.
Suppose that there are $s - (m+l)$ edges in $E \cap E(H_i)$,
and so $m+l$ edges in $E \cap E(H_j)$
for some non-zero $ l \in [s-2m]$.

Let $W_a$ be the vertices incident to two edges in $E \cap E(H_a)$ for $a = i,j$.
To show that $E$ forms a $(\leq 3)$-regular graph,
it suffices to prove that $W_i \cap W_j = \emptyset$,
as $H_i$ and $H_j$ are $2$-regular.
The last $m$ edges of $\ell_i$ form a matching
that covers every vertex except~$\infty$.
Therefore, $W_i$ contains the vertices
that are incident to one of the edges with labels between $|j-i|+l+\epsilon-1$ and $m$, apart from $\infty$.
Thus,
the vertices in $W_i$ are $i+m$ and $i + x$ and $i - x$ for $|j-i|+l+\epsilon-1 \leq x \leq m-1$.
Similarly, the first $m$ edges of $\ell_j$ form a matching
that covers all vertices except $j+m$.
Hence, $W_j$ contains the vertices
that are incident to one of the edges with labels between $m$ and $m+l-1$, except $j+m$.
Therefore,
the vertices in $W_j$ are $\infty$ and $j + x'$ and $j - x' +1$ for $x \in [l] -\{0\}$.
So $W_i \cap W_j$ is clearly empty when $l=1$. In the remaining cases,
we can then express $W_i$ and $W_j$ as follows modulo $2m$:
\begin{align*}
  W_i &= \{i+|j-i|+l+\epsilon-1,\ldots, i+(m-1),i+m,\\
      &\phantom{=}\hspace*{3.2mm}  i-(m-1),\ldots , i-|j-i|-l-\epsilon+1\}\\
      &= \begin{cases}
           \{2i-j+l-1, 2i-j+l ,\ldots ,    j-l+1\} &\;\text{if }\; i>j\\
           \{   j+l,    j+l+1 ,\ldots , 2i-j-l\} &\;\text{if }\; i<j
         \end{cases}\\\textrm{and}\quad
  W_j &= \{\infty, j-l+2 ,\ldots,j-1, j,j+1, \ldots , j+l-1\} \,.
\end{align*}
We see that $W_i \cap W_j = \emptyset$.
Thus, the $s$ consecutive edges of $E$ form a $(\leq 3)$-regular graph.

Now set $\ell := \ell_{i} \vee_{s} \ell_{j}$ with $s = m+1 - |j-i| -\epsilon$
where $\epsilon = 0$ if $i < j$
and $\epsilon = 1$ otherwise.
We~want to show that $ms(\ell) \geq s$.
Consider a set of $s$ consecutive edges $E$
of~$\ell$.
Suppose that there are $s-l$ edges in $E \cap E(H_i)$
and so $l$ edges in $E \cap E(H_j)$ for some non-zero $l \in [s]$.
Let $W_a$ be the vertices incident to an edge in $E \cap E(H_a)$
for $a = i,j$.
As $ms(\ell_b) \geq m$ for all $b \in [m]$,
the last $m$ edges of $\ell_i$ form a matching as do the
first $m$ edges of $\ell_j$.
In particular, $E \cap E(H_i)$ and $E \cap E(H_j)$
are matchings.
Thus, to show that $E$ forms a matching,
it suffices to prove that $W_i \cap W_j = \emptyset$.
The vertices in $W_i$ and $W_j$ are the vertices incident
to one of the last $s-l$ edges of $\ell_i$ and
one of the first $l$ edges of $\ell_j$,
respectively.
Hence,
the vertices in $W_i$ are $i+x$ and  $i-x+1$ for $l+|j-i|+\epsilon \leq x \leq m$,
while
the vertices in $W_j$ are $\infty,j$, and $j+x'$ and $j-x'$ for
$x' \in [l] - \{0\}$.
We can then express $W_i$ and $W_j$ as follows modulo $2m$:
\begin{align*}
  W_i &= \{i+l+|j-i|+\epsilon,\ldots, i+(m-1), i+m,\\
      &\phantom{=}\hspace*{3.2mm}  i-m+1, \ldots, i-|j-i|-\epsilon -l+1\}\\
      &= \begin{cases}
           \{   j+l  ,    j+l+1,\ldots, 2i-j-l+1 \} &\;\text{if }\; i<j\\
           \{2i-j+l+1, 2i-j+l+2,\ldots,    j-l \} &\;\text{if }\; i>j
         \end{cases}\\\textrm{and}\quad
  W_j &= \{\infty, j-l+1,\ldots, j-1, j, j+1,\ldots, j+l-1\}\,.
\end{align*}
We see that $W_i \cap W_j = \emptyset$.
Thus, the $s$ consecutive edges in $E$ form a matching.
\end{proof}

\begin{proof}[Proof of Theorem \ref{thm: Matching sequity for r} for when $n$ and $r$ are odd and $r\geq \frac{n-1}{2}$]
Theorem~\ref{thm: Cyclic matching sequity for r} implies the result for $r =\frac{n-1}{2}$.
So, suppose that $r \geq \frac{n+1}{2}$.
Let $\alpha_{u}$ be a labelling with the properties given in Lemma~\ref{lem: Label odd r and n}
for $u=\frac{r-1}{2}$ and $t = m =\frac{n-1}{2}$.
Set $H'_{i} := H_{\alpha_{u}(i)}$ and $\ell'_i := \ell_{\alpha_u(i)}$ for each $i \in [m]$.
By Lemma \ref{lem: Label odd r and n},
$\ell'_{i+u}=\ell_{\alpha_u(i+u)} = \ell_{\alpha_u(i)-1}$ for each $i \in [m-u]$
and $\ell'_{i+u+1}=\ell_{\alpha_u(i+u+1)} = \ell_{\alpha_u(i)+1}$ for each $i \in [m-u-1]$.
Thus, $ms_3(\ell'_{i},\ell'_{i+u}) \geq 3m+1$ for each $i \in [m-u]$
and $ms(\ell'_{i},\ell'_{i+u+1}) \geq m$ for each $i \in [m-u-1]$,
by Lemma \ref{lem: Hamiltonian comparison 1 and 3}.
Hence, $ms_{r}(K_n) \geq \left\lfloor\frac{rn-1}{2}\right\rfloor$ follows from
Proposition~\ref{prop: 2-regular decomposition odd r both},
using the decomposition
$H_{0}' ,\ldots, H_{m-1}'$ of $K_n$ with orderings $\ell'_0  ,\ldots, \ell'_{m-1}$,
respectively.
Lemma~\ref{lem: Gen matching sequence bound} implies that $ms_{r}(K_n) \leq \left\lfloor\frac{rn-1}{2}\right\rfloor$, completing the proof.
\end{proof}

\subsection{Proof of Theorem \ref{thm: Gen cyclic matching sequence odd n}}
\label{proof of thm on Kn odd n even r}
To prove Theorem~\ref{thm: Gen cyclic matching sequence odd n},
we will need the following ordering of the integers
in $\{l,l+1 \ldots , l+t-1 \}$.
An ordering $\alpha : A \rightarrow [|A|]$
corresponds to a list $a_0 , a_1 \ldots , a_{k-1}$
of the integers of $A$
if $\alpha(a_i) = i$ for all $i$.
Let $t \in [m+1]-\{0\}$ and $l \in [m-t+1]$,
and
let $\alpha_{l,t}$ be the ordering corresponding to
\begin{align*}
    l ,l+2 ,\ldots,l+t-2, l+t-1 ,l+t-3,\ldots, l+1  & \quad\text{if $t$ is even}\\\text{and}\qquad
    l ,l+2 ,\ldots,l+t-1, l+t-2, l+t-4,\ldots, l+1  & \quad\text{if $t$ is odd}\,.
\end{align*}

\begin{proof}[Proof of Theorem~\ref{thm: Gen cyclic matching sequence odd n}]
Set $d := \gcd(\frac{r}{2},m)$ and $c := \frac{m}{d}$.
Let $\alpha$ and $a_{i,j}$ be defined as in Lemma~\ref{lem: General cyclic ordering}
with $u = \frac{r}{2}$ and $t = m$.
Let    $H'_{\alpha(a_{i,j})} :=    H_{\alpha^{-1}_{jc,c}(i)}$
and $\ell'_{\alpha(a_{i,j})} := \ell_{\alpha^{-1}_{jc,c}(i)}$
for all $i \in [c]$ and $j \in [d]$.
This is indeed well-defined, as $\alpha$ is a bijection of $[m]$ and
$\alpha_{jc,c}$ are bijections of the disjoint sets $\{jc , jc+1 ,\ldots, jc+c-1 \}$ for $j \in [d]$.
By Lemma~\ref{lem: General cyclic ordering},
$\alpha(a_{i+1,j}) = \alpha(a_{i,j})+u \pmod{m}$
for all $i \in [c]$ and $j \in [d]$.
By definition, it is clear that
$|\alpha^{-1}_{l,t}(i+1)-\alpha^{-1}_{l,t}(i)| \leq 2$ for all $i\in [t]$
where $i+1$ is reduced modulo $t$.
Thus for $x = \alpha(a_{i,j})$,
Lemma~\ref{lem: Hamiltonian comparison} implies that
\[
  ms_2(\ell'_{x},\ell'_{x+u})
= ms_2\left(\ell'_{\alpha(a_{i,j})},\ell'_{\alpha(a_{i+1,j})}\right)
= ms_2\left(\ell_{\alpha^{-1}_{jc,c}(i)},\ell_{\alpha^{-1}_{jc,c}(i+1)}\right) \geq 2m-1 \,.
\]
By applying Proposition~\ref{prop: 2-regular decomposition both}
to $H'_{0}, \ldots , H'_{m-1}$ with orderings $\ell'_{0}, \ldots, \ell'_{m-1}$,
respectively, we see that
$cms_r(K_{n}) \geq \frac{rn}{2} -2 = \left\lfloor\frac{rn-1}{2}\right\rfloor-1$.
By Lemma~\ref{lem: Gen matching sequence bound}, $cms_r(K_{n})\leq \left\lfloor\frac{rn-1}{2}\right\rfloor$.
\end{proof}

\section{Proof of Theorem~\ref{thm: Cyclic matching sequity for r} for when \texorpdfstring{$r = \frac{n-1}{2}$}{r = n-1/2}}
\label{sec: r matching seq odd r centre case}
We will first prove the case when $r$ is even.
\begin{proof}[Proof of Theorem~\ref{thm: Cyclic matching sequity for r} when $r= \frac{n-1}{2}$ for even $r$]
Let $H_{i}$ be the Hamiltonian
cycle defined
in Subsection \ref{Decompositions of $K_n$ when $n$ is odd}.
Also, let $\ell_i$ be the ordering of $H_i$ defined in
Section \ref{sec:proof of theorem odd n}.
Let $\alpha$  and $a_{i,j}$ be as defined in Lemma \ref{lem: General cyclic ordering}
for $u = \frac{r}{2}$ and $t = m$.
Let $H'_{\alpha(a_{i,j})} := H_{a_{i,j}}$
and
 $\ell'_{\alpha(a_{i,j})} := \ell_{a_{i,j}}$
for $i \in [2]$ and $j \in [\frac{r}{2}] = [\frac{m}{2}]$.
As $\gcd(r,m) = \frac{m}{2}$,
it follows that $\frac{t}{u} = 2$. Thus,
$|a_{i+1,j} -a_{i,j}| = 1$ for all $i,j$.
Lemmas~\ref{lem: General cyclic ordering} and~\ref{lem: Hamiltonian comparison}
therefore imply that,
for $x = \alpha(a_{i,j})$,
\[
    ms_2(\ell'_{x},\ell'_{x+u})
  = ms_2\left(\ell'_{\alpha(a_{i,j})},\ell'_{\alpha(a_{i+1,j})}\right)
  = ms_2\left(\ell_{a_{i,j}},\ell_{a_{i+1,j}} \right)
  \geq 2m
\]
for all $i,j$.
By applying Proposition~\ref{prop: 2-regular decomposition both}
to $H'_{0} , \ldots , H'_{m-1}$  ordered by
$\ell'_{0} , \ldots , \ell'_{m-1}$, respectively,
we see that
$ms_r(K_n) \geq \left\lfloor\frac{rn-1}{2}\right\rfloor$.
The reverse inequality follows from Lemma~\ref{lem: Gen matching sequence bound},
completing the proof.
\end{proof}

Now let $r = m$ be odd and let
$R_i$ be the $2$-regular graph defined in Section~\ref{Decompositions of $K_n$ when $n$ is odd} for $i \in [m]$.
Let $\ell_i$ be the ordering of $R_i$ defined as follows:
\begin{align*}
\ell_i  (\{ v_{\infty}  , v_{i       ,  0}\}) &= 0   \,,\\
\ell_i  (\{ v_{\infty}  , v_{i       ,  1}\}) &= m   \,,\\
\ell_{i}(\{ v_{i+2x  ,x}, v_{i-2x    ,x  }\}) &=   x \quad\quad\:\,\textrm{ for } x \in [m] -\{0\}\,,\\
\ell_{i}(\{ v_{i+2x-1,x}, v_{i-(2x-1),x+1}\}) &= m+x \;\textrm{ for } x \in [m+1]-\{0\}\,.
\end{align*}

\begin{example}
\rm 
When $n=7$, $R_0$ is ordered as
\begin{center}
\begin{tikzpicture}[thick,scale=0.6]
  \pgfmathsetlengthmacro\scfac{2.5cm}
  \pgfmathsetmacro{\sepang}{120}
  \pgfmathtruncatemacro{\c}{3}
\foreach \colora/\mycount in {blue/1}%, red/2/below right,green/3/above right}
\draw[line width = \scfac*0.02, color = \colora]{
%% Adjacent to the centre vertex
[bend right]
(0,0)--node[pos = 0.5, left] {0} ($(90+\sepang*\intcalcMod{\mycount-1}{\c}:1*\scfac)$)
[bend right]
(0,0)to node[pos = 0.5, right] {3} ($(90+\sepang*\intcalcMod{\mycount-1}{\c}:2*\scfac)$)
%% Adjacent vertices of same radii
[bend right]
(90+\sepang*\intcalcMod{\mycount-1}{\c}+\sepang: 2*\scfac) to node[pos = 0.5, below]{1}
(90+\sepang*\intcalcMod{\mycount-1}{\c}-\sepang: 2*\scfac)
[bend right]
(90+\sepang*\intcalcMod{\mycount-1}{\c}+\sepang: 1*\scfac) to node[pos = 0.5, above]{2}
(90+\sepang*\intcalcMod{\mycount-1}{\c}-\sepang: 1*\scfac)
%% Adjacent vertices of same angle
(90+\sepang*\intcalcMod{\mycount-1}{\c} : 1*\scfac)-- node[pos = 0.5, left]{5}
(90+\sepang*\intcalcMod{\mycount-1}{\c} : 2*\scfac)
%%% Remaining edges
[bend right]
(90+\sepang*\intcalcMod{\mycount-1}{\c}+\sepang : 1*\scfac)to node[pos = 0.5, below ]{4}
(90+\sepang*\intcalcMod{\mycount-1}{\c}-\sepang : 2*\scfac)
[bend left]
(90+\sepang*\intcalcMod{\mycount-1}{\c}-\sepang : 1*\scfac)to node[pos = 0.5, below ]{6}
(90+\sepang*\intcalcMod{\mycount-1}{\c}+\sepang : 2*\scfac)
};
\pgfmathsetmacro{\sepangint}{90+\sepang}
\pgfmathsetmacro{\sepangfin}{450-\sepang}
\foreach \r in {1*\scfac,2*\scfac}%,3*\scfac}
\foreach \angle in {90,\sepangint,...,\sepangfin}
\draw[line width = \scfac*0.015, color = black!50]{
  (\angle : \r) node[circle, draw, fill=black!10,inner sep=\scfac*0.015, minimum width=\scfac*0.08]{}};
\draw[line width = \scfac*0.015, color = black!50]{
  (0 : 0) node[circle, draw, fill=black!10,inner sep=\scfac*0.03, minimum width=\scfac*0.08]{}
  (90 : \scfac*0.1) node [xshift=-\scfac*0,yshift=-\scfac*0.2,color = black]{$v_{\infty}$}};
\draw[line width = \scfac*0.05, color = black!30]{
  (  90 : 1*\scfac) node [anchor=west,xshift=-\scfac*0.45,yshift=-\scfac*0.02,color = black]{$v_{ 0,0}$}
  ( -30 : 1*\scfac) node [anchor=west,xshift= \scfac*0.07,yshift=-\scfac*0.02,color = black]{$v_{ 1,0}$}
  (-150 : 1*\scfac) node [anchor=west,xshift=-\scfac*0.5 ,yshift=-\scfac*0.02,color = black]{$v_{-1,0}$}
  (  90 : 2*\scfac) node [anchor=west,xshift=-\scfac*0.45,yshift=-\scfac*0.02,color = black]{$v_{ 0,1}$}
  ( -30 : 2*\scfac) node [anchor=west,xshift= \scfac*0.07,yshift=-\scfac*0.02,color = black]{$v_{ 1,1}$}
  (-150 : 2*\scfac) node [anchor=west,xshift=-\scfac*0.5 ,yshift=-\scfac*0.02,color = black]{$v_{-1,1}$}};
\end{tikzpicture}
\end{center}
\end{example}

For each $i$, set $R_{i} := R_{i'}$ and $\ell_i := \ell_{i'}$ where $i' \equiv i \pmod{m}$.
It is easy to check that $\ell_i$ is indeed a valid ordering of $R_i$
and that the first $m$ edges of $\ell_i$ form a matching
as do the last $m$ edges of $\ell_i$.

\begin{lemma}\label{lem: 1 and 3 seq centre case}
For all $i$, $ms_3(\ell_i,\ell_{i+1})\geq 3m+1$ and $ms(\ell_i,\ell_{i-1}) \geq m$.
\end{lemma}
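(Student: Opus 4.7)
The plan is to reproduce, for the $2$-regular decomposition $\{R_0,\ldots,R_{m-1}\}$ and its ordering $\ell_i$, the vertex-tracking strategy of Lemmas~\ref{lem: Hamiltonian comparison} and~\ref{lem: Hamiltonian comparison 1 and 3}: given a window $E$ of consecutive edges of $\ell_i\vee\ell_{i\pm 1}$, split $E=E_1\cup E_2$ into a suffix of $\ell_i$ and a prefix of $\ell_{i\pm 1}$, use the observation that the first and the last $m$ edges of every $\ell_a$ form matchings to dispose of the cases in which one of $E_1,E_2$ is short, and certify the degree bound in the remaining cases by a direct comparison of two vertex subsets of $V_{m,2}$.

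For the second inequality $ms(\ell_i,\ell_{i-1})\geq m$: since $|E_1|,|E_2|\leq m-1$, $E_1$ is contained in the matching on labels $\{m+1,\ldots,2m\}$ of $\ell_i$ and $E_2$ is contained in the matching on labels $\{0,\ldots,m-1\}$ of $\ell_{i-1}$, so each is already a matching. It then suffices to check that the vertex set $U_1$ covered by $E_1$ is disjoint from the vertex set $U_2$ covered by $E_2$. Reading off the $f$-edges of $\ell_i$ contributing to $E_1$ and the $e$-edges of $\ell_{i-1}$ contributing to $E_2$, an equality of two such vertices imposes a congruence on the first coordinate of the form $x\equiv y\pmod m$ or $x+y\equiv 1\pmod m$ together with a parity condition on the second coordinate that, because $m$ is odd, is forced to contradict the first. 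The vertices $v_\infty$ and $v_{i-1,0}$ are handled separately, noting that the only labels of $\ell_i$ incident to $v_\infty$ are $0$ and $m$, neither of which lies in $E_1$.

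For the first inequality $ms_3(\ell_i,\ell_{i+1})\geq 3m+1$: if $|E_1|\leq m$ or $|E_2|\leq m$ then the short side is a matching and $E$ is $(\leq 3)$-regular automatically. So assume $|E_1|=2m+1-p$ and $|E_2|=m+p$ with $1\leq p\leq m$. Because each of $E_1,E_2$ sits in a $2$-regular graph, $E$ has maximum degree at most $3$ iff no vertex has degree $2$ in both $E_1$ and $E_2$. Letting $W_1$ be the vertex set of the first $p$ edges of $\ell_i$ (so that $v$ has degree $\leq 1$ in $E_1$ iff $v\in W_1$) and $W_2$ the set of vertices of degree $2$ in $E_2$, the claim reduces to $W_2\subseteq W_1$. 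Taking complements, this is equivalent to showing that every vertex missed by the first $p$ edges of $\ell_i$ is covered by some $f$-edge $\{v_{i+2y,\,y},v_{i+2-2y,\,y+1}\}$ of $\ell_{i+1}$ with $y\in\{p,\ldots,m\}$. The explicit form of the $e$-edges $\{v_{i+2x,\,x},v_{i-2x,\,x}\}$ of $\ell_i$ and of the $f$-edges of $\ell_{i+1}$ then makes the containment a direct check, with the lone exceptional vertex $v_{i,1}$ absorbed by the $f$-edge at $y=m$.

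The main obstacle throughout is bookkeeping the parity of the second coordinate across the index shifts $i\mapsto i\pm 1$. Once one notes that $2$ is invertible modulo the odd integer $m$, the maps $x\mapsto i\pm 2x$ biject $[m]$ with itself, and the remaining verification is a finite, mechanical case analysis proceeding in the same spirit as the proofs of Lemmas~\ref{lem: Hamiltonian comparison} and~\ref{lem: Hamiltonian comparison 1 and 3}.
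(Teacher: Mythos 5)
Your proposal is correct and takes essentially the same route as the paper's proof: after using the fact that the first and last $m$ edges of each $\ell_a$ form matchings to dispose of short overlaps, you reduce each inequality to a containment/disjointness of explicitly parameterised vertex sets (the paper's $W_2\subseteq W_1$ and $W_1\subseteq W_0$), verified by the same index bookkeeping modulo $m$ and modulo $2$ using that $m$ is odd. The only difference is presentational, since you pass to complements and a congruence--parity check where the paper writes out the sets and re-indexes directly.
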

\begin{proof}
First, we will show that $ms_3(\ell_i,\ell_{i+1})\geq  3m+1$ for all $i$.
Set $\ell := \ell_{i} \vee_{3m+1} \ell_{i+1}$
and consider a set of $3m+1$  consecutive edges $E$ of~$\ell$.
A vertex $v$ has degree more than $3$ in $E$ only if
$v$ has degree two in both $E \cap E(R_{i})$ and $E \cap E(R_{i+1})$.
The first $m$ edges of $\ell_j$ form  a matching
as do the last $m$ edges of $\ell_j$ for all $j$.
Therefore,
if there are $m$ or fewer edges in either $E \cap E(R_{i})$ or $E \cap E(R_{i+1})$,
then $E$ forms a $(\leq 3)$-regular graph.
So, suppose that there are $2m+1-l$ edges in $E \cap E(R_i)$
and so $m+l$ edges in $E \cap E(R_{i+1})$ for some non-zero $l \in [m+1]$.
Let $W_2$ be the vertices of degree $2$ in $E \cap E(R_{i+1})$
and $W_1$ be the vertices of degree at most $1$ in $E \cap E(R_{i})$.
To show that $E$ forms a $(\leq 3)$-regular graph,
it suffices to show that $W_2 \subseteq W_1$.

As $R_i$ is $2$-regular and the first $m$ edges of $\ell_i$ form a matching,
$W_1$ contains the vertices incident to one of the first $l$ edges of $\ell_i$.
Thus,
$W_1 = \{ v_{\infty} \} \cup \{ v_{,i+2x,x},v_{i-2x,x} \;:\; x \in[l] \}$.
The first $m$ edges of $\ell_{i+1}$ form a matching which covers
every vertex except $v_{i,1}$.
Therefore, $W_2$
the vertices incident to an edge with label between $m$ and $m+l-1$,
excluding $v_{i,1}$:
$W_2 = \{ v_{\infty} \} \cup \left\{ v_{i+1+2x'-1,x'},v_{i+1-(2x'-1),x'+1} \;:\;x' \in [l]-\{0\}\right\}$.
By simplifying and re-indexing,
we see that
$W_2 = \{ v_{\infty}\} \cup
\left(\left\{ v_{i+2x',x'},v_{i-2x',x'} \;:\; x' \in [l] \right\}-\{ v_{i-2(l-1),l-1}\}\right) \subseteq W_1$.
Therefore, $E$ forms a $(\leq 3)$-regular graph and $ms_3(\ell_i , \ell_{i+1}) \geq 3m+1$.

Finally, we show that $ms(\ell_i,\ell_{i-1}) \geq m$ for all $i$.
Set $\ell := \ell_i \vee_m \ell_{i-1}$
and consider a set of $m $ consecutive edges $E$ of $\ell$.
Suppose that there are $m-l$ edges in $E \cap E(R_i)$
and thus $l$ edges in $E \cap E(R_{i-1})$ for some non-zero $l\in [m]$.
Let $W_1$ be the vertices incident to an edge in $E \cap E(R_{i-1})$
and $W_0$ be the vertices not incident to any edge in $E \cap E(R_{i})$.
The last $m$ edges of $\ell_{i}$ form a matching as do the first $m$ edges of $\ell_{i-1}$.
Thus,
a vertex $v$ is incident to 2 or more edges of $E$ only if
$v \in W_1$ and $v$ is incident to an edge in $E \cap E(R_{i})$.
Therefore, it suffices to show that $W_1 \subseteq W_0$.

The last $m$ edges of $\ell_i$ form a matching in which $v_\infty$ is the only isolated vertex.
Therefore, $W_0$~contains the vertex $v_\infty$ along with
the vertices incident to an edge with label between $m+1$ and $m+l$;
that is,
$W_0 = \{ v_{\infty} \} \cup \left\{ v_{i+2x-1,x}, v_{i-(2x-1),x+1} \;:\; x \in[l+1]-\{0\}\right\}$.
By re-indexing, we see that
$W_0 =\! \{ v_{\infty} ,v_{i-1,0},v_{i-1+2l,l}  \} \cup
\left\{ v_{i-1+2x,x}, v_{i-1-2x,x}
: x \!\in \![l]- \{0\} \right\}$.
Now, $W_1$ contains the vertices that are incident to one of the first $l$ edges of $\ell_{i-1}$.
In other words,
$W_1 = \{ v_{\infty},v_{i-1,0} \} \cup
\left\{ v_{i-1+2x',x'} , v_{i-1-2x',x'} \;:\; x' \in [l]- \{0\} \right\} \subseteq W_0$.
Hence, $E$ forms a matching and $ms(\ell_i , \ell_{i-1}) \geq m$, as required.
\end{proof}

\begin{proof}[Proof of Theorem~\ref{thm: Cyclic matching sequity for r} when $r= \frac{n-1}{2}$ for odd $r$]
Set $u := \frac{r-1}{2} = \frac{m-1}{2}$ and
let $\beta \;:\; [m] \rightarrow [m]$
be the function defined by $\beta(i) := iu^{-1} \pmod{m}$ for all $i \in [m]$.
The function $\beta$ is clearly a bijection.
Set    $R'_{x} :=    R_{\beta(x)}$
and $\ell'_{x} := \ell_{\beta(x)}$ for $x \in [m]$.
For any $i$, $\beta (i+u) \equiv (i+u)u^{-1} \equiv \beta(i)+1 \pmod{m}$.
Also, as $u^{-1} \equiv -2 \pmod{m}$, we see that
\[
\beta (i+u+1) \equiv -2(i+u+1) \equiv -2i-2u-2 \equiv -2i-1 \equiv \beta(i)-1 \pmod m \,.
\]
Lemma~\ref{lem: 1 and 3 seq centre case} thus implies that, for any $x\in [m]$,
$ms(\ell'_{x},\ell'_{x+u+1}) = ms(\ell_{\beta(x)},\ell_{\beta(x)-1})\geq m$
and
$ms_3(\ell'_{x},\ell'_{x+u}) = ms(\ell_{\beta(x)},\ell_{\beta(x)+1}) \geq 3m+1$.
By applying Proposition~\ref{prop: 2-regular decomposition odd r both}
to $R'_0, \ldots, R'_{m-1}$
ordered by $\ell'_0, \ldots, \ell'_{m-1}$,
respectively,
we see that $cms_r(K_n) \geq \left\lfloor\frac{rn-1}{2}\right\rfloor$.
By Lemma~\ref{lem: Gen matching sequence bound},
$cms_r(K_n) \leq \left\lfloor\frac{rn-1}{2}\right\rfloor$,
and the result follows.
\end{proof}

\section{General conditions and the proof of Theorem \ref{thm: K_n comp equiv}}\label{sec: Gen}
In the process of proving Theorem \ref{thm: K_n comp equiv},
we develop some notions of sequencibility
where an arbitrary condition is placed on the subgraphs
formed by consecutive edges.
We express such a condition by letting $\mathcal{C}$ be
an arbitrary family of graphs on a fixed set of vertices $V$ with some fixed vertex labelling.

A ordering $\ell$ of some graph is {\em cyclically  $(s,\mathcal{C})$-sequenceable}
if all $s$ cyclically consecutive edges in $\ell$ form a graph in $\mathcal{C}$.
A graph $G$ is {\em cyclically $(s,\mathcal{C})$-sequenceable}
if there exists a cyclically $(s,\mathcal{C})$-sequenceable ordering $\ell$ of~$G$.
Note that $s$ is not maximised here:
for an arbitrary set of conditions $\mathcal{C}$,
maximising $s$ may be trivial or otherwise not of interest.
For a graph $G = (V,E)$, let
\[
  \mathcal{C}^{\complement_{G}} :=
  \{(V, E(C) \Delta E(G))  \; : \; C \in \mathcal{C}  \}\,,
\]
where $E(C) \Delta E(G)$ is the symmetric difference of $E(C)$ and $E(G)$.

\begin{lemma}\label{lem: comp cond}
Let $\mathcal{C}$ be a set of conditions on vertex-labelled graphs;
let $G$ be a graph, and let $s$ be an integer.
Then for an ordering $\ell$ of $G$,
$\ell$ is cyclically $(s,\mathcal{C})$-sequenceable
if and only if
$\ell$ is cyclically $(|E(G)|-s,\mathcal{C}^{\complement_{G}})$-sequenceable.
\end{lemma}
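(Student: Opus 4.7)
The plan is to exploit the following simple combinatorial observation: in any cyclic ordering of the edges of $G$, the complement of a set of $s$ cyclically consecutive edges (taken within $E(G)$) is itself a set of $|E(G)|-s$ cyclically consecutive edges, and vice versa. Once this bijection is in place, the lemma reduces to matching it up with the graph-theoretic complementation that defines $\mathcal{C}^{\complement_G}$.

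More precisely, fix an ordering $\ell$ of $G$ and let $N := |E(G)|$. For each $i \in [N]$, let
\[
  S_i^{(s)} := \{\ell^{-1}(i), \ell^{-1}(i+1), \ldots, \ell^{-1}(i+s-1)\} \pmod N,
\]
the set of $s$ cyclically consecutive edges starting at position $i$. First I would observe that the complementary positions $\{i+s, i+s+1, \ldots, i+N-1\} \pmod N$ are themselves $N-s$ cyclically consecutive positions, so $E(G) \setminus S_i^{(s)} = S_{i+s}^{(N-s)}$. Hence the map $S_i^{(s)} \mapsto E(G) \setminus S_i^{(s)}$ is a bijection from the $s$-windows of $\ell$ to the $(N-s)$-windows of $\ell$.

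Next I would translate this to the vertex-labelled graphs that $\mathcal{C}$ is a condition on. Let $C_i := (V, S_i^{(s)})$ and $C_i' := (V, S_{i+s}^{(N-s)})$. Since $S_i^{(s)} \subseteq E(G)$, we have
\[
  E(C_i) \,\triangle\, E(G) = E(G) \setminus S_i^{(s)} = S_{i+s}^{(N-s)} = E(C_i'),
\]
so by the definition of $\mathcal{C}^{\complement_G}$, we have $C_i \in \mathcal{C}$ if and only if $C_i' \in \mathcal{C}^{\complement_G}$. Running $i$ over $[N]$, the statement \emph{every $s$-window of $\ell$ lies in $\mathcal{C}$} is therefore equivalent to the statement \emph{every $(N-s)$-window of $\ell$ lies in $\mathcal{C}^{\complement_G}$}, which is precisely the claim.

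There is essentially no obstacle here; the only thing to be careful about is that $E(C_i) \subseteq E(G)$, so the symmetric difference really does coincide with the set-theoretic complement inside $E(G)$, and that the cyclic index shift by $s$ indeed sends an $s$-window to the adjacent $(N-s)$-window. Both facts are immediate, so the proof is short and entirely mechanical once the bijection above is written down.
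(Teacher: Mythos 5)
Your proposal is correct and follows essentially the same argument as the paper: the complement within $E(G)$ of $s$ cyclically consecutive edges is a set of $|E(G)|-s$ cyclically consecutive edges, and since each window is a subset of $E(G)$ the symmetric difference with $E(G)$ is just that complement. The only cosmetic difference is that you obtain both directions at once from the window bijection, whereas the paper proves one direction and then invokes $(\mathcal{C}^{\complement_{G}})^{\complement_{G}}=\mathcal{C}$ for the converse.
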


\begin{proof}
Let $k = |E(G)|$ and let $\ell$ be an ordering of $G$ which is cyclically $(s,\mathcal{C})$-sequenceable.
Also, let $e_i$ be the edge of $G$ labelled $i$ by $\ell$.
Consider a set of $(k-s)$-cyclically consecutive edges $E$ in $\ell$,
namely $e_{j}, e_{j+1}, \ldots , e_{j+k-s-1}$ for some $j \in [k]$,
where the subscripts are taken modulo $k$.
The edges of $G$ not in $E$ are
$e_{j+k-s}, e_{j+k-s+1}, \ldots , e_{j-1}$,
and they are in this order in~$\ell$.
By assumption, the $s$ edges of $E(G)- E$ form a graph in $\mathcal{C}$.
Thus, the $(k-s)$-cyclically consecutive edges
$e_{j}, e_{j+1}, \ldots , e_{j+k-s-1}$
must form a member of $\mathcal{C}^{\complement_{G}}$.
Hence, $\ell$ is cyclically $(k-s,\mathcal{C}^{\complement_{G}})$-sequenceable.
If $\ell$ is cyclically $(|E(G)|-s,\mathcal{C}^{\complement_{G}})$-sequenceable,
then it follows,
from the above argument and the identities $(\mathcal{C}^{\complement_{G}})^{\complement_{G}} = \mathcal{C}$ and $k-(k-s)=s$,
that $\ell$ is cyclically $(s,\mathcal{C})$-sequenceable.
\end{proof}

\begin{proof}[Proof of Theorem \ref{thm: K_n comp equiv}]
Let $s_a = \bigl\lfloor\frac{an-1}{2}\bigr\rfloor$ for each $a \in [n-1]$.
Also, let $\mathcal{C}_r$ be the set of all vertex-labelled $(\leq r)$-regular graphs on
$n$ vertices.
Suppose that $cms_r(\ell) = s_r$ and, in particular,
suppose that $\ell$ is cyclically $(s_r,\mathcal{C}_r)$-sequenceable for some ordering of $K_n$.
Then, by Lemma \ref{lem: comp cond},
$\ell$ is $\bigl(\frac{n(n-1)}{2}-s_r,\mathcal{C}_r^{\complement_{K_n}}\bigr)$-sequenceable.
Set $s' := \frac{n(n-1)}{2}-s_r = s_{n-1-r}+1$.
Then $\mathcal{C}_r^{\complement_{K_n}}$ is
the family of all vertex-labelled subgraphs of $K_n$ whose vertices each have degree at least $n-1-r$.
The minimum number of edges in a member of $\mathcal{C}_r^{\complement_{K_n}}$ is~$s'$.
Also, any member of $\mathcal{C}_r^{\complement_{K_n}}$ with $s'$ edges must be a graph
in which each vertex has degree $n-1-r$ except one vertex which has degree $n-r$.

Consider $s' +1$ cyclically consecutive edges
$e_0 , \ldots , e_{s'}$ in $\ell$.
Since $\ell$ is $(s'\!,\mathcal{C}_r^{\complement_{K_n}})$-sequenceable,
the edges of each of $E_0 := \{ e_0,\ldots , e_{s' - 1}\}$
and $E_1 := \{e_{1},\ldots , e_{s'}\}$
form a member of $\mathcal{C}_r^{\complement_{K_n}}$.
Also as $s' < \frac{n(n-1)}{2}$,
$e_0 \neq e_{s' }$.
I claim that this ensures that
the edges $E' := \{ e_1,\ldots, e_{s' - 1}\}$ form a $(\leq n-1-r)$-regular graph.
Assume otherwise; then some vertex $v$ is incident to at least $n-r$ of the edges in $E'$.
Let $v_0$ and $v_1$ be the endpoints of $e_0$.
The edges of $E_0$ must form a graph in $\mathcal{C}_r^{C_{K_n}}$,
i.e., a graph whose vertices each has degree $n-1-r$ except one which has degree $n-r$.
As $v$ is incident to at least $n-r$ of the edges in $E' \subseteq E_0$,
$v$ must be incident to exactly $n-r$ of the edges in $E_0$.
In particular, $v$ must be distinct from $v_0$ and $v_1$.
So, $v_0$ and $v_1$ are each incident to $n-1-r$ of the edges in $E_0$.
However, this means that $v_0$ and $v_1$ are each incident to $n-2-r$ of the edges in $E'$.
Thus, either $v_0$ or $v_1$ is incident to only $n-2-r$ of the edges in $E_1$,
since $e_{s'} \neq e_0$.
Therefore, the graph formed by the edges of $E_1$ is not in $\mathcal{C}_r^{\complement_{K_n}}$,
a contradiction.

Any set $E'$ of $s' - 1 = s_{n-1-r}$ cyclically consecutive edges in $\ell$
is a consecutive subsequence of some $s' + 1$ cyclically consecutive edges
in $\ell$ of the form $e_0 \vee L_{\ell}(E') \vee e_{s'}$.
Thus, by the above argument, every vertex must have degree at most $n-1-r$ in $E'$.
Hence, $cms(\ell) \geq s_{n-1-r}$
and, therefore, $cms_{n-1-r}(K_n) \geq s_{n-1-r}$.
By Lemma~\ref{lem: Gen matching sequence bound},
$cms_{n-1-r}(K_n) = s_{n-1-r}$.
The reverse direction,
namely that $cms_{n-1-r}(K_n) = s_{n-1-r}$ implies $cms_{r}(K_n) = s_{r}$,
follows by applying the above argument with $r$ replaced by $n-1-r$.
\end{proof}

Note that a similar result to Lemma~\ref{lem: comp cond}
could be can proved for non-cyclic sequences.
However,
the notion of sequencibility would have to be generalised
to allow partially cyclical sequences.
A result analogous to Theorem~\ref{thm: K_n comp equiv} follows from a similar proof,
but there is not an equivalence between
$ms_r(K_n) = \frac{rn-1}{2}$ and $ms_{n-1-r}(K_n) = \frac{(n-1-r)n-1}{2}$
for odd $r$ and $n$.

\section{Concluding remarks}\label{sec: Con}

When $r$ is even and $n$ is odd,
we expect that $cms_r(K_n) = \bigl\lfloor\frac{rn-1}{2}\bigr\rfloor$.
Theorem \ref{thm: Cyclic matching sequity for r}
confirms this for even $r = \frac{n-1}{2}$.
By computer search, we were able to find the following two orderings for $K_7$
that show that $cms_2(K_7) = 6 = \bigl\lfloor\frac{2n-1}{2}\bigr\rfloor$
and $cms_4(K_7) = 13 = \bigl\lfloor\frac{4n-1}{2}\bigr\rfloor$, respectively.
The orderings are represented by the sequence of edges that has corresponding ordering value sequence $0,\ldots,20$.
\begin{align*}
&\{\infty,0 \}, \{ 1,2\},\{ 3,-2\},\{ 3,-1\},\{ 1,-1\},\{ \infty,-2\},\{ 0,2\}, \\
&\{ 0,1\},\{ 2,3\},\{ \infty,-1\},\{ -2,-1\},\{ 1,3\},\{ \infty,2\},\{ 0,-2\},\\
&\{ 0,3\},\{ \infty,1\},\{2,-1\},\{ 1,-2\},\{ \infty,3\},\{ 0,-1\},\{ 2,-2\}\,;\\[3mm]
&\{\infty,0\}, \{ \infty,1\},\{ 0,2\},\{1,3\},\{ 2,-2\},\{ 0,-1\},\{ -1,-2\},  \\
&\{ \infty,3\},\{ 1,2\},\{ \infty,-2\},\{0,3\},\{ 1,-1\},\{ 2,3\},\{ 0,-2\},\\
&\{ \infty,-1\},\{ 0,1\},\{\infty,2\},\{3,-2\},\{2,-1\},\{ 1,-2\},\{ 3,-1 \}\,.
\end{align*}
We also found an ordering for $K_9$, 
showing that $cms_2(K_9) = 8 = \bigl\lfloor\frac{2n-1}{2}\bigr\rfloor$,
as given below.
\begin{align*}
&\{\infty,0 \}, \{ \infty,1\},\{ 0,2\},\{ 1,3\},\{ 2,4\},
\{ 3,-3\},\{ 4,-2\},\{ -1,-3\},\{ \infty,-2\}  \\
&\{ 0,1\},\{ \infty,2\},\{ 1,-1\},\{ 2,3\},\{ 0,-3\},
\{ 3 , 4\},\{ -3,-2\},\{ \infty,4\},\{ 0,-1\}, \\
&\{ 1,-2\},\{ 2,-1\},\{\infty,3\},\{ 2,-3\},\{ 0,4\},
\{ 1,-3\},\{ 3,-2\},\{ 4,-1 \},\{ 0,-2\},\\
&\{ \infty, -1 \},\{ 1,2\},\{\infty , -3\},\{ 0, 3\},\{ 1,4\},
\{ 2,-2\},\{ 3,-1\},\{ 4,-3\},\{ -1,-2\} \,.
\end{align*}

When $r$ and $n$ are both odd,
Theorem \ref{thm: Cyclic matching sequity for r} implies that
$cms_r(K_{n}) = \bigl\lfloor\frac{rn-1}{2}\bigr\rfloor$ for $r = \frac{n-1}{2}$.
If there are other cases for which $cms_{r}(K_{n}) = \left\lfloor\frac{rn-1}{2}\right\rfloor$
with $r$ and $n$ odd, then, by Theorem~\ref{thm: K_n comp equiv},
the condition for which $cms(K_n) = \left\lfloor\frac{rn-1}{2}\right\rfloor$ holds
must be invariant under replacing $r$ with $n-1-r$.

\begin{proposition}\label{prop: 1 to r}
For a graph $G$ and integers $r_1,r_2$,
\[
 ms_{r_1 r_2} (G) \geq r_2 \, ms_{r_1}(G) \quad \textrm{and} \quad
cms_{r_1 r_2} (G) \geq r_2 \, cms_{r_1}(G) \,.
\]
\end{proposition}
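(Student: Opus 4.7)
The plan is to take an optimal ordering for $ms_{r_1}(G)$ (or $cms_{r_1}(G)$) and observe that the same ordering witnesses the desired lower bound for $ms_{r_1 r_2}(G)$ (resp.\ $cms_{r_1 r_2}(G)$), by partitioning a long window of consecutive edges into shorter windows and summing degrees.

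More concretely, let $s := ms_{r_1}(G)$ and let $\ell$ be an ordering of $G$ with $ms_{r_1}(\ell) = s$; so every $s$ consecutive edges of $\ell$ form a $(\leq r_1)$-regular subgraph. I claim that $ms_{r_1 r_2}(\ell) \geq r_2 s$. Consider any window $E$ of $r_2 s$ consecutive edges of $\ell$. Partition $E$ into $r_2$ blocks $E_0, \ldots, E_{r_2-1}$, where $E_j$ consists of the edges of $E$ in positions $js, js+1, \ldots, (j+1)s-1$ within $E$. Each $E_j$ is a set of $s$ consecutive edges of $\ell$ and so, by choice of $\ell$, forms a $(\leq r_1)$-regular subgraph. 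For any vertex $v \in V(G)$, the degree of $v$ in $E$ is the sum of its degrees in the $E_j$'s, which is at most $r_2 \cdot r_1$. Hence $E$ induces a $(\leq r_1 r_2)$-regular subgraph, which proves the first inequality.

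The cyclic case is identical: start from an ordering $\ell$ achieving $cms_{r_1}(G)$, where now every window of $s = cms_{r_1}(G)$ cyclically consecutive edges forms a $(\leq r_1)$-regular subgraph. Any window of $r_2 s$ cyclically consecutive edges splits into $r_2$ cyclically consecutive blocks of length $s$, and the same degree-summation argument yields $cms_{r_1 r_2}(\ell) \geq r_2 s$.

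There is essentially no obstacle here; the only thing to be careful about is that the block partition of a window of length $r_2 s$ into $r_2$ consecutive sub-windows of length $s$ really does produce blocks that are themselves consecutive (cyclically consecutive) in $\ell$, so that the hypothesis on $\ell$ applies to each block. That is immediate from the definitions, so the proposition follows.
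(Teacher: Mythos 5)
Your proof is correct and is essentially the same as the paper's: both partition a window of $r_2 s$ (cyclically) consecutive edges into $r_2$ blocks of $s$ consecutive edges, apply the $(\leq r_1)$-regularity of each block, and sum degrees at each vertex. The only cosmetic difference is that the paper writes out the cyclic case and declares the non-cyclic case similar, whereas you do the reverse.
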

Note that this proposition and Theorems~\ref{thm:Matching sequence} and~\ref{thm:Cyclic matching sequence}
together imply that $ms_r(K_n) \geq r\frac{n-1}{2}$ and $cms_r(K_n) \geq r\frac{n-3}{2}$,
respectively,
when $n$ is odd.
\begin{proof}
Let $s = cms_{r_1}(G)$ and
let $\ell$ be a labelling of $G$ for which $cms_{r_1}(\ell) = cms_{r_1}(G)$.
Any set of $r_2 s$ cyclically consecutive edges $E$ of $\ell$
are just $r_2$ sets of $s$ cyclically consecutive edges of~$\ell$
and in each set, every vertex has degree at most $r_1$.
Thus, every vertex has degree at most $r_1 r_2$ in $E$
and, hence, $cms_{r_1r_2}(G) \geq cms_{r_1r_2}(\ell) \geq r_2 s$.
The non-cyclic case is similar.
\end{proof}

A hypergraph is a pair $(V,E)$
where $V$ is a set and $E$ is a family of subsets of $V$.
A $k$-graph is a hypergraph $(V,E)$ for which each member $e\in E$ has cardinality $|e| = k$.
For instance, each graph is a $2$-graph.
The notion of matching sequencibility naturally extends to hypergraphs,
as do Proposition \ref{prop: 1 to r} and the propositions of Section~\ref{Preliminaries},
using analogous proofs.
For example the natural
hypergraph analogue of Proposition~\ref{prop: Matching decomposition both} is as follows.
\begin{proposition}
\label{prop: Matching decomposition both-hypergraph}
Let $\mathcal{H}$ be a hypergraph that decomposes into matchings $\mathcal{M}_0 ,\ldots , \mathcal{M}_{t-1}$, each with $n$ edges
and orderings $\ell_0 , \ldots , \ell_{t-1}$, respectively.
Suppose, for some $\epsilon \in [n]$ and $r < \Delta(\mathcal{H})$,
that $ms(\ell_i ,\ell_{i+r}) \geq n-\epsilon$ for all $i\in [t-r]$.
Then $ms_r(\mathcal{H}) \geq rn-\epsilon$,
and if $ms(\ell_i ,\ell_{i+r}) \geq n-\epsilon$ for all $i\in [t]$,
then $cms_r(\mathcal{H}) \geq rn-\epsilon$.
\end{proposition}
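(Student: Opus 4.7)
The plan is to mimic the proof of Proposition~\ref{prop: 2-regular decomposition odd r both} (and hence of Proposition~\ref{prop: Matching decomposition both}) verbatim, noting that the only hypergraph-specific facts required are that a matching in a hypergraph is a set of pairwise disjoint hyperedges (so every vertex has degree at most $1$) and that $(\leq r)$-regularity of a hypergraph means each vertex lies in at most $r$ hyperedges. Both concepts behave exactly as in the graph case, so the combinatorial bookkeeping carries over.

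First I would treat the cyclic case, since the non-cyclic case is only a restriction of it. Let $\ell$ be the ordering corresponding to $L_{\ell_{0}}(\mathcal{M}_{0}) \vee\cdots\vee L_{\ell_{t-1}}(\mathcal{M}_{t-1})$, and take any set $E$ of $rn-\epsilon$ cyclically consecutive edges of $\ell$. Because each $\mathcal{M}_{i}$ has exactly $n$ edges and $rn-\epsilon \leq rn$, the edges of $E$ lie, in order, in at most $r+1$ consecutive matchings and take the form
\[
\underbrace{e_{1},\ldots,e_{j}}_{\text{in }\mathcal{M}_{i}},\;
L_{\ell_{i+1}}(\mathcal{M}_{i+1})\vee\cdots\vee L_{\ell_{i+r-1}}(\mathcal{M}_{i+r-1}),\;
\underbrace{e_{j+1},\ldots,e_{rn-\epsilon}}_{\text{in }\mathcal{M}_{i+r}}
\]
for some $i\in[t]$ and $j\in[n+1]$ (with the convention that if $E$ is contained in fewer matchings, some blocks are empty; one reduces to the harder case by shifting $E$).

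Next I would bound the degree of an arbitrary vertex $v$ in $E$. Each of the $r-1$ full middle matchings $\mathcal{M}_{i+1},\ldots,\mathcal{M}_{i+r-1}$ contributes at most $1$ to $\deg_{E}(v)$, giving at most $r-1$ in total. The remaining edges of $E$ consist of the last $j$ edges of $\ell_{i}$ followed by the first $(n-\epsilon)-j$ edges of $\ell_{i+r}$; these are precisely the $n-\epsilon$ edges produced by $\ell_{i}\vee_{n-\epsilon}\ell_{i+r}$. By hypothesis $ms(\ell_{i},\ell_{i+r})\geq n-\epsilon$, so these $n-\epsilon$ edges form a matching and contribute at most $1$ to $\deg_{E}(v)$. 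Hence $\deg_{E}(v)\leq (r-1)+1 = r$, and $E$ spans a $(\leq r)$-regular sub-hypergraph of $\mathcal{H}$, giving $cms_{r}(\mathcal{H})\geq rn-\epsilon$.

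For the non-cyclic statement one repeats the same argument but now $i$ ranges only over $[t-r]$, matching the weaker hypothesis. There is no real obstacle: the only place a graph-specific property might have been invoked in the original proof is the claim ``two edges through a common vertex cannot both lie in a single matching'', which is equally true for hypergraph matchings. The hypergraph analogues of Propositions~\ref{prop: 2-regular decomposition both} and~\ref{prop: 2-regular decomposition odd r both} would require a hypergraph notion of ``$(\leq 2)$-regular'' and ``$(\leq 3)$-regular'' decomposition, but once those are in place the same splitting-into-blocks argument applies unchanged.
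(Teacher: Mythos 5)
Your proof is correct and follows essentially the same route as the paper, which itself only asserts that the hypergraph analogue follows by the same concatenation-and-block-splitting argument used for Propositions~\ref{prop: Matching decomposition both}--\ref{prop: 2-regular decomposition odd r both}: the $r-1$ full middle matchings contribute at most one to each vertex degree, and the boundary edges are $n-\epsilon$ consecutive edges of $\ell_i \vee_{n-\epsilon} \ell_{i+r}$, hence a matching by hypothesis. The only slightly loose point is the remark about ``shifting $E$'' in the degenerate cases; the cleaner observation is that if $E$ meets at most $r$ matchings then every vertex trivially has degree at most $r$, so no hypothesis is even needed there.
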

The natural analogue of $K_n$ for $k$-graphs is the complete $k$-graph on $n$ vertices,
denoted $\mathcal{K}^k_n$, whose edges are all the vertex subsets of size $k$.
Katona~\cite{MR2181045} proved that $cms(\mathcal{K}^k_n) \geq \lfloor\frac{n}{k^2}\rfloor$ for sufficiently large $n$,
under the assumption that a particular conjecture holds.
Katona~\cite{MR2181045} also conjectured that $cms(\mathcal{K}^k_n) \geq \lfloor\frac{n}{k}\rfloor-1$.
By similar reasoning to Lemma~\ref{lem: Gen matching sequence bound},
$ms_r(\mathcal{G})\leq \bigl\lfloor\frac{rn-1}{k}\bigr\rfloor$ for any $k$-graph~$\mathcal{G}$,
which is not a $(\leq r)$-regular $k$-graph.
This leads us to conjecture that
$ms_r(\mathcal{K}^k_n)= \bigl\lfloor\frac{rn-1}{k}\bigr\rfloor$
and $\bigl\lfloor\frac{rn-1}{k}\bigr\rfloor-1 \leq cms_r(\mathcal{K}^k_n) \leq \bigl\lfloor\frac{rn-1}{k}\bigr\rfloor$
for all $r$, $n$, and $k$,
and to expect that $cms_r(\mathcal{K}^k_n)$ can attain both bounds.

We prove a result similar to Katona's, Theorem~\ref{thm:Katonaplus} below,
for the special case in which~$k \mid n$.

\begin{theorem}
\label{thm:Katonaplus}
Let $k \mid n$, $a$ be the largest integer such that $\frac{n}{k}-(a-1)k>0$
and $b$ be the largest integer such that $\frac{n}{k}-(b-1)(k+1)>0$.
Then, for $r < \Delta(\mathcal{K}^k_n)$,
\[
 ms_r(\mathcal{K}^k_n) \geq (r-1)\frac{n}{k} + a \qquad \textrm{and} \qquad
cms_r(\mathcal{K}^k_n) \geq (r-1)\frac{n}{k} + b \,.
\]
\end{theorem}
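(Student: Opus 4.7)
The plan is to apply the hypergraph analogue of Proposition~\ref{prop: Matching decomposition both-hypergraph} to a perfect-matching decomposition of $\mathcal{K}^k_n$ furnished by Baranyai's theorem. Since $k \mid n$, Baranyai's theorem yields a decomposition $\mathcal{K}^k_n = \mathcal{M}_0 \cup \cdots \cup \mathcal{M}_{t-1}$ into perfect matchings, each of size $n/k$. For each $\mathcal{M}_i$ I will construct an ordering $\ell_i$, and setting $\epsilon = n/k - a$ in the proposition gives $ms_r(\mathcal{K}^k_n) \geq r(n/k) - \epsilon = (r-1)(n/k) + a$, provided the hypothesis $ms(\ell_i, \ell_{i+r}) \geq a$ is verified for $i \in [t-r]$. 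Unpacking the definitions, this hypothesis is equivalent to the following: for each such $i$, the $(a-1)k$ vertices covered by the last $a-1$ edges of $\ell_i$ are disjoint from the $(a-1)k$ vertices covered by the first $a-1$ edges of $\ell_{i+r}$. Under this disjointness, any window of $a$ consecutive edges straddling the $\ell_i$-to-$\ell_{i+r}$ boundary is a union of a suffix of $\ell_i$ and a prefix of $\ell_{i+r}$, each a sub-matching, so the union is itself a matching. The cyclic case is treated identically, with $a$ replaced by $b$ and the range of $i$ extended to all of $[t]$.

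The orderings $\ell_i$ will be built greedily. Iterating $i = 0, 1, \ldots, t-1$ in order, at step $i$ I choose the head of $\ell_i$ to be any $a-1$ edges of $\mathcal{M}_i$ vertex-disjoint from the set $S_{i-r}$ of $(a-1)k$ vertices covered by the tail of the already-fixed $\ell_{i-r}$. Since $\mathcal{M}_i$ is a perfect matching and $|S_{i-r}| = (a-1)k$, at most $(a-1)k$ of its $n/k$ edges are incident to $S_{i-r}$, so at least $\frac{n}{k} - (a-1)k \geq 1$ edges of $\mathcal{M}_i$ are disjoint from $S_{i-r}$, which is positive exactly by the inequality defining $a$. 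The tail of $\ell_i$ is then chosen from the remaining edges of $\mathcal{M}_i$, thereby setting up the constraint that $\ell_{i+r}$ will have to satisfy.

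The main obstacle is the counting that allows both a head of size $a-1$ and a disjoint tail of size $a-1$ in the middle matchings: a naive greedy count demands $\frac{n}{k} - (a-1)k \geq a - 1$, which rearranges to $\frac{n}{k} \geq (a-1)(k+1)$, precisely the definition of $b$. This is exactly why the cyclic case saturates at the weaker bound $b$, since every matching then carries both head and tail constraints. For the stronger $a$-bound in the non-cyclic case, I will exploit two structural features absent in the cyclic setting: (i) the first $r$ matchings carry no head constraint and the last $r$ matchings carry no tail constraint, so slack can be pushed to the ends of the sequence; and (ii) I may choose the Baranyai decomposition and the within-matching orderings so that the constraining set $S_{i-r}$ is a \emph{union of $a-1$ complete edges of $\mathcal{M}_i$} rather than a generic $(a-1)k$-subset. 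Under this alignment, the number of edges of $\mathcal{M}_i$ disjoint from $S_{i-r}$ jumps from $\frac{n}{k} - (a-1)k$ to $\frac{n}{k} - (a-1)$, which exceeds $a-1$ under just the $a$-condition since $2(a-1) \leq 2(a-1)k < \frac{2n}{k^2} \cdot k \leq n/k$ for $k \geq 2$. Making this alignment rigorous — plausibly via an auxiliary Hall-type matching between tail choices in $\mathcal{M}_{i-r}$ and edge selections in $\mathcal{M}_i$, carried out layer by layer through the greedy iteration — is the principal technical step of the proof.
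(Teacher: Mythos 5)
Your overall strategy (Baranyai decomposition plus the hypergraph analogue of Proposition~\ref{prop: Matching decomposition both-hypergraph}, with greedily built orderings and $\epsilon=\frac{n}{k}-a$, resp.\ $\frac{n}{k}-b$) is the same as the paper's, but there is a genuine gap at the key counting step. First, your claim that $ms(\ell_i,\ell_{i+r})\geq a$ is \emph{equivalent} to the last $a-1$ edges of $\ell_i$ being vertex-disjoint from the first $a-1$ edges of $\ell_{i+r}$ is false: that disjointness is sufficient but far from necessary. A window of $a$ consecutive edges crossing the boundary uses the last $a-l$ edges of $\ell_i$ and the first $l$ edges of $\ell_{i+r}$, so all that is needed is the ``staircase'' condition that the $l$-th edge of $\ell_{i+r}$ avoids the last $a-1-l$ edges of $\ell_i$. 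By over-strengthening the requirement you force the count $\frac{n}{k}-(a-1)k\geq a-1$, i.e.\ essentially the $b$-condition, and this is exactly the obstacle you then try to remove. Your remedy (ii) --- choosing the Baranyai decomposition and orderings so that the $(a-1)k$ vertices of the previous tail form a union of $a-1$ whole edges of $\mathcal{M}_i$ --- is not proved and is highly dubious: since the matchings in a decomposition are edge-disjoint, this demands, for every constrained pair simultaneously, a set of $a-1$ edges of $\mathcal{M}_i$ covering \emph{exactly} the same vertex set as $a-1$ edges of $\mathcal{M}_{i-r}$, and you give no argument (the ``Hall-type'' step is only a hope). Your remedy (i) does nothing for the middle matchings. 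Also, the arithmetic chain $2(a-1)\leq 2(a-1)k<\frac{2n}{k^2}\cdot k\leq \frac{n}{k}$ is wrong as written ($\frac{2n}{k}\not\leq\frac{n}{k}$), and the cyclic case is not ``treated identically'': in your greedy the head of the first matching of each cyclic chain must avoid a tail that is only determined at the very end, and under your full-disjointness formulation the closing step needs roughly $\frac{n}{k}\geq (b-1)(k+2)$, which the $b$-condition does not give.

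The fix, which is how the paper proceeds, is to build the head of each ordering one edge at a time using only the staircase condition: when choosing the $l$-th edge of $\ell_{i+1}$ (in the chain notation), it must avoid the last $b-1-l$ edges of $\ell_i$ and the $l$ edges already chosen, excluding at most $(b-1-l)k+l\leq (b-1)k$ edges of the perfect matching; positivity of $\frac{n}{k}-(b-1)k$ (a fortiori of $\frac{n}{k}-(a-1)k$ in the non-cyclic case) is exactly the defining inequality, so the $a$-bound needs no alignment trick at all --- head and tail of the same matching never compete. The sole place where the $(k+1)$ factor, i.e.\ the $b$-condition, is genuinely needed is the cyclic wrap-around: there the first matching's last $b-1$ edges are already fixed, and re-permuting its initial segment to respect the incoming constraint costs an extra $b-1$ forbidden edges per step, giving the count $\frac{n}{k}-(b-1)-(b-1-l)k-l>0$, which is precisely $\frac{n}{k}>(b-1)(k+1)$. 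Until you either prove your alignment claim or switch to the staircase argument, the non-cyclic bound $(r-1)\frac{n}{k}+a$ (and the closing of the cyclic chains) is not established.
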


To prove this theorem,
we use Baranyai's Theorem~\cite{MR0416986} which states that if $k \mid n$,
then $\mathcal{K}^k_n$ has a complete matching decomposition.
Note that in such a decomposition, each matching has size $\frac{n}{k}$.
\begin{proof}[Proof of Theorem \ref{thm:Katonaplus}]
By Baranyai's Theorem, there is a matching decomposition of $\mathcal{K}^k_n$,
say, $\mathcal{M}_{0},\ldots, \mathcal{M}_{N-1}$ where $N = \frac{k}{n}\binom{n}{k}$.
Set $d := \gcd(r,N)$ and $c := \frac{N}{d}$,
and arbitrarily re-index the matching decomposition as $\mathcal{M}_{i,j}$
for $i \in [c]$ and $j \in [d]$.
We construct orderings $\ell_{i,j}$ of $\mathcal{M}_{i,j}$ for $i \in [c]$ and $j \in [d]$
such that $ms(\ell_{i,j},\ell_{i+1,j}) \geq b$ for all $i \in [c]$.
Choose an arbitrary ordering $\ell_{0,j}$ of $\mathcal{M}_{0,j}$.
Suppose, by induction on $i$ for a fixed $j$,
that $\ell_{0,j},\ldots, \ell_{i,j}$ have been constructed,
that the first~$l$ edges of $\ell_{i+1,j}$ are $e_{0},\ldots, e_{l-1}$,
and that any $b$ consecutive edges of $L_{\ell_{i,j}}(\mathcal{M}_{i,j}), e_{0},\ldots, e_{l-1}$ form a matching.
Let $e'_{0}, \ldots, e'_{b-2}$ be the last $b-1$ edges of $\ell_{i,j}$.
If $l \leq b-2$, then
there are at least $\frac{n}{k}-(b-l-1)k-l >0$ edges in $\mathcal{M}_{i+1,j}-\{e_{0} , \ldots , e_{l-1} \}$
which do not share a common vertex with any of the edges $e'_{l} , \ldots , e'_{b-2}$.
Thus, we can choose an edge $e_{l}$ in $\mathcal{M}_{i+1,j}-\{e_{0} , \ldots , e_{l-1} \}$
such that $e'_{l} , \ldots , e'_{b-2} , e_{0} , \ldots, e_{l-1},e_l$ forms a matching
and let $\ell_{i+1,j}(e_l) = l$.
If $l > b-2$, then for an arbitrary edge $e_{l}$ in $\mathcal{M}_{i+,j}-\{e_{0} , \ldots , e_{l-1} \}$,
let $\ell_{i+1,j}(e_l) = l$.
Now let $e_{0} , \ldots , e_{b-2}$ be the last $b-1$ edges of $\ell_{0,j}$.
We are free to permute the ordering $\ell_{0,j}$ and maintain the identity
$ms(\ell_{0,j},\ell_{1,j})\geq b$
so long as the labels of $e_{0}, \ldots , e_{b-2}$ remain unchanged.
Therefore, we can apply a similar argument to the above to permute $\ell_{0,j}$
in such a way that ensures that $ms(\ell_{c-1,j},\ell_{0,j})\geq b$,
since $\frac{n}{k}-(b-1)-(b-l-1)k-l >0$ for all $l \in [b-1]$.
Set $\mathcal{M}'_{\alpha(a_{i,j})} := \mathcal{M}_{i,j}$
and $\ell'_{\alpha(a_{i,j})} := \ell_{i,j}$
where $\alpha$ and~$a_{i,j}$ are defined as in Lemma~\ref{lem: General cyclic ordering}
for $u = r$ and $t = N$.
Proposition~\ref{prop: Matching decomposition both-hypergraph} and Lemma~\ref{lem: General cyclic ordering} yields the second inequality
using the matchings $\mathcal{M}'_{0} , \ldots, \mathcal{M}'_{N-1}$ ordered by
$\ell'_{0} , \ldots, \ell'_{N-1}$, respectively.
The non-cyclic case is similar and, therefore, omitted.
\end{proof}

K\"{u}hn and Osthus~\cite{MR3213309} offer an alternate decomposition of
$\mathcal{K}_n^{k}$
than those given by Baranyai's Theorem,
into {\em Berge cycles} which broadly generalise cycles in graphs.
Their decomposition would however not be likely to be useful for proving a matching sequencibility result.

\subsection*{Acknowledgements}
I would like to thank Thomas Britz for helping improve the presentation
of the article and helpful comments.

\end{document}